\newtheorem{theorem}{Theorem}
\newtheorem{corollary}{Corollary}
\newtheorem{definition}{Definition}
\newtheorem{proposition}{Proposition}
\newtheorem{lemma}{Lemma}
\newcommand{\ol}[1]{\overline{#1}}
\newcommand{\res}[3]{#1|^{#2}_{#3}}
\newcommand{\resmin}[4]{\res{#1}{#2}{#3} \setminus #4}
\begin{document}

\title[Free Adequate Semigroups]{Free Adequate Semigroups}

\keywords{adequate semigroup, free object, word problem, Munn tree}
\subjclass[2000]{20M10; 08A10, 08A50}
\maketitle

\begin{center}

    MARK KAMBITES

    \medskip

    School of Mathematics, \ University of Manchester, \\
    Manchester M13 9PL, \ England.

    \medskip

    \texttt{Mark.Kambites@manchester.ac.uk} \\
\end{center}

\begin{abstract}
We give an explicit description of the free objects in the quasivariety
of adequate semigroups, as sets of labelled
directed trees under a natural combinatorial multiplication. The 
morphisms of the free adequate semigroup onto the free ample semigroup and
into the free inverse semigroup are realised by a combinatorial ``folding''
operation which transforms our trees into Munn trees.
We use these results to show that free adequate semigroups and monoids
are $\mathcal{J}$-trivial and never finitely
generated as semigroups, and that those which are finitely
generated as $(2,1,1)$-algebras have decidable word problem.
\end{abstract}

\section{Introduction}

The structural theory of semigroups has traditionally been largely 
concerned with semigroups which admit local inverses with respect to 
non-identity idempotents; chief among these are the \textit{regular} and 
\textit{inverse} semigroups. In the late 1970's, it was observed by 
Fountain \cite{Fountain79,Fountain82} that many of the desirable 
properties of regular and inverse semigroups stem not directly from the 
existence of local inverses, but rather from the consequent fact that the 
cancellation properties of elements in such a semigroup are reflected in 
cancellation properties of the idempotents. This observation opened up 
effective methods of study for much wider classes of semigroups, the 
development of which forms the basis of the \textit{York School} of 
semigroup theory.

Amongst the classes of semigroups introduced by the York school,
the oldest and perhaps most natural is the class of
 \textit{adequate semigroups}.
 Adequate semigroups generalise inverse semigroups in something akin to the way that
cancellative monoids generalise groups; indeed the single-idempotent
adequate monoids are exactly the cancellative monoids, in exactly the
same way that the single-idempotent inverse monoids are exactly the
groups. They relate to \textit{abundant semigroups} \cite{Fountain82} in the same way
that inverse semigroups relate to \textit{regular semigroups}.
Adequate semigroups are most naturally viewed as algebras (in
the sense of universal algebra) of signature $(2,1,1)$, where the usual
multiplication is augmented with unary operations $*$ and $+$ which map
each element to certain idempotents which share its left and right
cancellativity properties (see Section~\ref{sec_preliminaries}
for a precise definition).
Within the category of $(2,1,1)$-algebras (although not in the category
of semigroups) the adequate semigroups form a quasivariety, and it
follows from general principles
(see, for example, \cite[Proposition~VI.4.5]{Cohn81}) that there exists a free adequate
semigroup for each cardinality of generating set.

When studying any class of algebras, it is extremely helpful to have
an explicit combinatorial description of the free objects in the class.
Such a description allows one for example to understand which identities
do and do not hold in the given class, and potentially to express any
member of the class as a collection of equivalence classes of elements
in a free algebra. In the case of inverse semigroups, a description of the free
objects first discovered by
Scheiblich \cite{Scheiblich72} was developed by Munn \cite{Munn74} into an
elegant geometric representation which has been of immense value in
understanding inverse semigroups.
Subsequently there have appeared a number of alternative proofs of Munn's
result and different representations for the free inverse
semigroup \cite{Gutierrez00,Petrich90,Poliakova05,Reynolds84}. Variants of Munn's
approach have since been used to describe the free objects in a number
of more general classes of semigroups \cite{Fountain88,Fountain91,Fountain07}
and also in the closely related setting of Cockett-Lack restriction categories
\cite{Cockett06}. All of these techniques rely on certain identities satisfied
by the classes of semigroups (or categories) in question, which permit the rewriting of
expressions to move idempotents to one side, and hence allow the systematic
decomposition of each element as a product of an idempotent part and an element of a
free subsemigroup. In a general adequate semigroup, by contrast, a typical
element cannot be written as such a product, so there is no hope of directly
applying Munn's technique, and hitherto no explicit description of the free
adequate semigroup has been found.

The main aim of this paper is to give an explicit geometric representation of
the free adequate semigroup on a given set, as a collection of edge-labelled directed
trees under a natural multiplication operation.
This result is inspired by Munn's celebrated characterisation of free inverse
semigroups as subtrees of the Cayley graph of the free group \cite{Munn74};
indeed the natural map from a free adequate semigroup to the free
inverse semigroup admits a natural interpretation as a ``folding'' map of
our trees onto Munn
trees. Partly for this reason, we believe that our representation is the
natural analogue of Munn's for adequate semigroups, and is likely to prove
correspondingly useful in the study of adequate semigroups.

As examples of how our main theorems can be applied, we show 
that every free adequate semigroup
or monoid is $\mathcal{J}$-trivial, while non-trivial examples are never
finitely generated as semigroups. Our 
representation also gives rise to a decision algorithm for the
word problem in these semigroups, although we do not claim that this
algorithm is tractable for large words. The computational complexity of
the word problem remains for now unclear, and deserves further study.

In a subsequent paper \cite{K_onesidedadequate}, we shall show that our
approach also leads to a description of the free objects in the categories
of left and right adequate
semigroups (roughly speaking, those semigroups which satisfy the
conditions defining adequate semigroups on one side only). An
alternative approach to free left and right adequate semigroups appears
in recent work of Branco, Gomes and Gould \cite{Branco09}.

In addition to this introduction, this article comprises six
sections. In Section~\ref{sec_preliminaries} we briefly recall the
definition of adequate semigroups, and summarise some of their
elementary properties on which we later rely. Sections~\ref{sec_trees}
and \ref{sec_algebra} introduce respectively the basic combinatorial objects from which our
representation is constructed, and the operations we shall use for combining
them, as well as proving some foundational results concerning them.
Section~\ref{sec_fas} is devoted to the proof that the resulting algebraic
structures are free objects in the quasivariety of adequate semigroups.
Section~\ref{sec_remarks} contains remarks on our
characterisation and its relationship with other work, while Section~\ref{sec_applications} shows how it can be
applied to establish some basic algebraic properties of the free adequate
semigroups and monoids.

\section{Preliminaries}\label{sec_preliminaries}

In this section we briefly recall some definitions, notation and terminology
relating to adequate semigroups. For a more comprehensive and detailed
introduction, see \cite{Fountain79}.

Recall that if $S$ is a semigroup without identity then $S^1$ denotes the monoid obtained
by adjoining an additional element to $S$ which acts as a multiplicative
identity element; if $S$ is already a monoid then we define $S^1 = S$. On any semigroup $S$, an equivalence relation $\mathcal{L}^*$ is
defined by $a \mathcal{L}^* b$ if and only if we have $ax = ay \iff bx = by$
for every $x, y \in S^1$. Dually, an equivalence relation $\mathcal{R}^*$ is
defined by $a \mathcal{R}^* b$ if and only if we have $xa = ya \iff xb = yb$
for every $x, y \in S^1$.

A semigroup is called \textit{left abundant} [\textit{right abundant}] if
every $\mathcal{R}^*$-class [respectively, every $\mathcal{L}^*$-class]
contains an idempotent. A semigroup is \textit{abundant} if it is both
left abundant and right abundant. If an abundant [left abundant, right
abundant] semigroup has the additional property that the idempotents commute,
then the semigroup is called \textit{adequate} [\textit{left adequate},
\textit{right adequate}]. It is
easily seen that, in a left [right] adequate semigroup, each $\mathcal{R}^*$-class
[$\mathcal{L}^*$-class] must contain a \textit{unique} idempotent.
We denote by $x^+$ [respectively, $x^*$] the unique idempotent in the
$\mathcal{R}^*$-class [respectively, $\mathcal{L}^*$-class] of an element
$x$; this idempotent acts as a left [right] identity for
$x$. The unary operations $x \mapsto x^+$ and $x \mapsto x^*$ are of such critical
importance in the theory of adequate [left adequate, right adequate]
semigroups that it is usual to consider these semigroups as algebras of signature
$(2,1,1)$ [or $(2,1)$ for left adequate and right adequate semigroups]
with these operations. In particular, one restricts attention to
morphisms which preserve the $+$ and/or $*$ operations (and hence
coarsen the 
$\mathcal{R}^*$ and $\mathcal{L}^*$ relations) as well as the
multiplication. These form a proper
subclass of the semigroup morphisms between the adequate semigroups, as
can be seen by considering for example any map from a free monoid (which
is cancellative and hence adequate) onto
any adequate monoid with more than one idempotent. Similarly, adequate
[left or right adequate] monoids may be viewed as algebras of signature
$(2,1,1,0)$ [$(2,1,0)$] with the identity a distinguished constant symbol.

We mention one important subclass of the adequate semigroups. An
adequate semigroup $S$ is called \textit{ample} (also known as \textit{type A})
if $ae = (ae)^+ a$ and $ea = a(ea)^*$ for all elements $a \in S$ and
idempotents $e \in S$.

We now establish some basic properties of left and right adequate
semigroups; these are well-known but since the proofs are very short we
include them in order to keep this article self-contained.
\begin{proposition}\label{prop_adequatebasics}
Let $S$ be a left adequate [respectively, right adequate] semigroup and
let $a, b, e, f \in S$ with $e$ and $f$ idempotent. Then
\begin{itemize}
\item[(i)] $e^+ = e$ [$e = e^*$];
\item[(ii)] $(ab)^+ = (ab^+)^+$ [$(ab)^* = (a^*b)^*$];
\item[(iii)] $a^+ a = a$ [$a a^* = a$];
\item[(iv)] $ea^+ = (ea)^+$ [$a^* e = (ae)^*]$;
\item[(v)] $a^+(ab)^+ = (ab)^+$ and [$(ab)^* a^* = (ab^*)$];
\item[(vi)] If $ef = f$ then $(ae)^+(af)^+ = (af)^+$ [$(ea)^* (fa)^* = (fa)^*$].
\end{itemize}
\end{proposition}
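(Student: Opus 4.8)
The plan is to prove everything for the left adequate case; the right adequate statements then follow by the obvious left-right dual. The whole proposition rests on a handful of defining facts about $\mathcal{L}^*$ and $\mathcal{R}^*$ that I would extract first. The key working fact is that $a \mathcal{R}^* a^+$ by definition of $a^+$, together with the standard observation that for an idempotent $e$ one has $e \mathcal{R}^* a$ if and only if $e$ is a left identity for $a$ \emph{and} $ea = ea$ forces the appropriate cancellation; more precisely I would use the well-known characterisation that, in any semigroup, an idempotent $e$ satisfies $e \mathcal{R}^* a$ precisely when $ea = a$ and, whenever $xa = ya$, we have $xe = ye$. Since in a left adequate semigroup $a^+$ is the \emph{unique} idempotent in the $\mathcal{R}^*$-class of $a$, to prove an identity of the form $u = v^+$ it suffices to show that $v$ is idempotent when $u$ is, that $u$ is a left identity for $v$, and that $u \mathcal{R}^* v$; uniqueness then pins down $u = v^+$. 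This "identify the idempotent by its $\mathcal{R}^*$-class" strategy is the engine behind every part.

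I would carry out the parts roughly in the order given, since later parts lean on earlier ones. Part (i) is immediate: $e$ is idempotent, so $e$ is a left identity for itself and $e \mathcal{R}^* e$, whence by uniqueness $e^+ = e$. Part (iii) is just the restatement that $a^+$ acts as a left identity for $a$, which is recalled in the text. For part (ii), $(ab)^+ = (ab^+)^+$, I would compare the $\mathcal{R}^*$-classes of $ab$ and $ab^+$: using that $b^+$ is a left identity for $b$ (so $ab^+ \cdot$ relates to $ab$) and the fact that $\mathcal{R}^*$ is a left congruence in the sense that $b \mathcal{R}^* b^+$ implies $ab \mathcal{R}^* ab^+$, the two elements share an $\mathcal{R}^*$-class and hence the same idempotent. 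Part (iv), $ea^+ = (ea)^+$, combines commuting idempotents with part (ii)-style reasoning: $ea^+$ is a product of idempotents, hence idempotent since idempotents commute, and I would check it is the distinguished idempotent of $ea$ by verifying it is a left identity for $ea$ and lies in the right $\mathcal{R}^*$-class.

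For part (v), $a^+(ab)^+ = (ab)^+$, the quickest route is to observe $(ab)^+$ lies in the $\mathcal{R}^*$-class of $ab$, and since $a^+$ is a left identity for $a$ one shows $a^+$ fixes $(ab)^+$ on the left; here I would use part (iv) or a direct computation with commuting idempotents, noting $a^+(ab)^+ = (a^+ ab)^+ = (ab)^+$ via (iv) and (iii). Part (vi) is where I expect the real work: from $ef = f$ I must deduce $(ae)^+(af)^+ = (af)^+$. The natural plan is to show $(af)^+ = (aef)^+$ (using $ef=f$), then apply part (v) with $a$ replaced by $ae$ and $b$ by $f$, giving $(ae)^+\bigl((ae)f\bigr)^+ = \bigl((ae)f\bigr)^+$, and finally identify $((ae)f)^+ = (aef)^+ = (af)^+$. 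The main obstacle is bookkeeping: making sure each rewriting is justified by an already-proven part and that the commuting-idempotents hypothesis is invoked exactly where a product of two idempotents must be collapsed to a single canonical one. Once parts (ii)--(v) are in hand, (vi) should fall out as a short chain of substitutions rather than a genuinely new argument.
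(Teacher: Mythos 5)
Your proposal is correct and takes essentially the same route as the paper: every part is settled by uniqueness of the idempotent in an $\mathcal{R}^*$-class, with your appeal to $\mathcal{R}^*$ being a left congruence being exactly the paper's explicit computation in (ii), (iv) obtained from commuting idempotents, (v) as $a^+(ab)^+=(a^+ab)^+=(ab)^+$, and (vi) precisely the chain $(ae)^+(af)^+=(ae)^+(aef)^+=(aef)^+=(af)^+$ via part (v) applied with $ae$ and $f$. The only discrepancy is on the paper's side: its proof of (vi) cites ``part (i)'' where part (v) is meant, and your version gets the reference right.
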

\begin{proof}
In each case we prove only the claim for left adequate semigroups, the
other being dual.
\begin{itemize}
\item[(i)] By definition $e^+$ is the unique idempotent in the
$\mathcal{R}^*$-class of $e$, which since $e$ is idempotent must be $e$ itself.
\item[(ii)] Since $ab \mathcal{R}^* (ab)^+$ we have $x(ab)^+ = y(ab)^+$ if and
only if $xab = yab$. Since $b \mathcal{R}^* b^+$ this is true if and only if
$xab^+ = yab^+$. And since $ab^+ \mathcal{R}^* (ab^+)^+$ this is true if and
only if $x(ab^+)^+ = y(ab^+)^+$. Thus, $(ab)^+ \mathcal{R}^* (ab^+)^+$. But
both are idempotent and each $\mathcal{R}^*$-class contains a unique idempotent,
so we must have $(ab)^+ = (ab^+)^+$.
\item[(iii)] Since $a^+ \mathcal{R}^* a$ and $a^+ a^+ = a^+ = 1 a^+$ we have $a^+ a = 1 a = a$.
\item[(iv)] Since idempotents commute we have $(ea^+)(ea^+) = ee a^+ a^+ = ea^+$,
that is, $ea^+$ is idempotent. Now using (i) and (ii) we have
$ea^+ = (e a^+)^+ = (ea^+)^+ = (ea)^+$. 
\item[(v)] Using (iv) and (iii) we have $a^+(ab)^+ = (a^+ab)^+ = (ab)^+$.
\item[(vi)] We have $(ae)^+ (af)^+ = (ae)^+ (aef)^+ = (aef)^+ = (af)^+$,
where the second equality is an application of part (i).
\end{itemize}
\end{proof}

Recall that an object $F$ in a concrete category $\mathcal{C}$ is called
\textit{free} on a subset $\Sigma \subseteq F$ if every function from $\Sigma$ to an
object $N$ in $\mathcal{C}$ extends uniquely to a morphism from $F$ to $N$.
The subset $\Sigma$ is called a \textit{free generating set} for $F$, and its
cardinality is the \textit{rank} of $F$. A free object
in a given category is uniquely determined up to isomorphism by its rank,
so it is usual to speak of \textit{the} free object of a given rank in a
given category.

Within the class of $(2,1,1)$-algebras, the adequate semigroups form a
quasivariety, defined by the quasi-identities
$AX = AY \iff A^*X = A^*Y$ and $XA = YA \iff XA^+ = YA^+$,
together with the associative law for multiplication and further
identities which ensure that the unary operations are idempotent and
have idempotent
and commutative images. A corresponding statement applies to the class
of adequate monoids.
Since every quasivariety contains free objects of every rank
(see, for example, \cite[Proposition~VI.4.5]{Cohn81}) it follows that there
exist free adequate semigroups and monoids of every rank. The chief aim of
the present paper
is to give an explicit geometric representation of these. We begin with a
proposition, the essence of which is that the distinction between semigroups
and monoids is unimportant.

\begin{proposition}\label{prop_monoidsemigroup}
Let $\Sigma$ be an alphabet. The free adequate monoid on $\Sigma$ is
isomorphic to the free adequate
 semigroup
on $\Sigma$ with a single adjoined element which is an identity for
multiplication and a fixed point for $*$ and $+$.
\end{proposition}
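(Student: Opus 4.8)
The plan is to set $F$ to be the free adequate semigroup on $\Sigma$ and put $M = F^1$, the semigroup $F$ with an extra element $1$ adjoined acting as a two-sided multiplicative identity and satisfying $1^* = 1^+ = 1$. I will first verify that $M$ is genuinely an adequate monoid, and then check directly that the inclusion $\Sigma \hookrightarrow M$ enjoys the universal property defining the free adequate monoid on $\Sigma$; since free objects are determined up to isomorphism by their generating set, this identifies $M$ with the free adequate monoid and proves the proposition.

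The step needing genuine care is checking that $M$ is adequate, and the crux is that adjoining $1$ must not disturb the $\mathcal{L}^*$- and $\mathcal{R}^*$-classes. Observe that these relations are by definition computed using the same quantifier range, namely $F^1 = M = M^1$, so for $a,b \in F$ one has $a \mathcal{L}^* b$ in $F$ if and only if $a \mathcal{L}^* b$ in $M$, and dually for $\mathcal{R}^*$. The only danger is that $1$ becomes $\mathcal{L}^*$- or $\mathcal{R}^*$-related to some $a \in F$, which would force $a$ to be left- or right-cancellable in $M$. But Proposition~\ref{prop_adequatebasics}(iii) gives $a a^* = a = a\cdot 1$ with $a^* \in F$, hence $a^* \neq 1$, so $a$ is not left cancellable; dually $a^+ a = a = 1\cdot a$ shows $a$ is not right cancellable. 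Therefore the $\mathcal{L}^*$- and $\mathcal{R}^*$-class of $1$ in $M$ is $\{1\}$, while for $a \in F$ the classes in $M$ coincide with those in $F$. Consequently every $\mathcal{L}^*$- and $\mathcal{R}^*$-class of $M$ contains an idempotent (namely $a^*$, $a^+$, or $1$), so $M$ is abundant; its idempotents are those of $F$ together with $1$, which clearly all commute, so $M$ is adequate, and its operations $*$ and $+$ restrict to those of $F$ and fix $1$ as required.

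For the universal property, let $N$ be any adequate monoid and $\phi \colon \Sigma \to N$ any function. Forgetting the distinguished identity, $N$ is an adequate semigroup, so by freeness of $F$ there is a unique $(2,1,1)$-morphism $\psi \colon F \to N$ extending $\phi$. Extend it to $\overline{\phi} \colon M \to N$ by setting $\overline{\phi}(1) = 1_N$. This preserves the distinguished constant and, since $1_N$ is the identity of $N$, preserves all products involving $1$; it preserves $*$ and $+$ on $F$ because $\psi$ does, and at $1$ because $1^* = 1^+ = 1$ while $1_N$ is idempotent, so $1_N^* = 1_N^+ = 1_N$ by Proposition~\ref{prop_adequatebasics}(i). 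Thus $\overline{\phi}$ is a morphism of adequate monoids extending $\phi$.

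Finally, for uniqueness, any morphism $\theta \colon M \to N$ of adequate monoids extending $\phi$ must send the distinguished identity of $M$ to that of $N$, and its restriction to the $(2,1,1)$-subalgebra $F$ is a $(2,1,1)$-morphism extending $\phi$, hence equals $\psi$ by freeness of $F$; so $\theta = \overline{\phi}$. This establishes the universal property and completes the argument. The only place where real work is concentrated is the verification that no element of $F$ becomes cancellable after $1$ is adjoined; once that is in hand, everything else is a formal consequence of the freeness of $F$.
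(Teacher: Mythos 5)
Your proof is correct and follows essentially the same route as the paper: adjoin an identity $1$ (fixed by $*$ and $+$) to the free adequate semigroup, verify the result is adequate, and transfer the universal property via freeness of the semigroup, with uniqueness by restriction. The only difference is that you spell out the adequacy check the paper dismisses as ``easily verified'' --- your observation that the $\mathcal{L}^*$- and $\mathcal{R}^*$-relations are computed over the same quantifier range $F^1 = M = M^1$, together with the cancellability argument using $aa^* = a\cdot 1$ and $a^+a = 1\cdot a$ to isolate $1$ in its own class, is exactly the right way to fill that gap.
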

\begin{proof}
Let $S$ be the free adequate semigroup on $\Sigma$, and let $T = S \cup \lbrace 1 \rbrace$
be the monoid obtained by adjoining an element $1$ which is a multiplicative
identity and fixed point for $*$ and $+$. Then certainly $T$ is a monoid, and it is easily verified from
the definitions that $T$ is adequate. Now
suppose $f : \Sigma \to M$
is a map from $\Sigma$ to an adequate monoid.
Since $S$ is free on $\Sigma$ and the monoid $M$ is also an adequate
 semigroup, $f$
extends uniquely to a $(2,1,1)$-morphism $g : S \to M$. We define a map
$h : T \to M$ by $h(1) = 1$ and $h(x) = g(x)$ for all $x \in S$. Then it is easily verified
that $h$ is a $(2,1,1,0)$-morphism from $T$ to $M$ which
extends $f$. Moreover, $h$ is the unique morphism with this property, since any other
such morphism would restrict to another morphism from $S$ to $M$
extending $f$, contradicting the assumption that $S$ is free.
Thus, $T$ is free adequate monoid on $\Sigma$.
\end{proof}

We discuss briefly the relationship of abundant and adequate semigroups to
regular and inverse semigroups. Recall that Green's relation $\mathcal{L}$
[$\mathcal{R}$] is defined on any semigroup $S$ by $x \mathcal{L} y$ [$x \mathcal{R} y]$ if and
only if $x$ and $y$ generate the same principal left [right] ideal.
A semigroup is called \textit{regular} if every $\mathcal{R}$-class and
every $\mathcal{L}$-class contains an idempotent; a regular semigroup is
called \textit{inverse} if in addition the idempotents commute.
It can be shown \cite{Fountain79} that two elements of $S$ are $\mathcal{L}^*$-related
[respectively, $\mathcal{R}^*$-related] if and only if there is an embedding
of $S$ into another semigroup in which their images generate the same principal
left ideal [respectively, principal right ideal]. It follows that
$\mathcal{L}^*$ and $\mathcal{R}^*$ are coarsenings of $\mathcal{L}$ and
$\mathcal{R}$, and hence that every regular semigroup is abundant and
every inverse semigroup is adequate. In fact, it can be shown moreover that
every inverse semigroup is ample.

\section{Trees and Pruning}\label{sec_trees}

In this section we introduce the combinatorial objects which will form 
the elements of our representation of the free adequate semigroup.
The main objects of our study are \textit{labelled directed trees}, by which
we mean edge-labelled directed graphs whose underlying undirected graphs are
trees. Note that such graphs have the property that there is \textit{at most
one} directed path between any two vertices. If $e$ is an edge in such a
tree, we denote by $\alpha(e)$, $\omega(e)$ and $\lambda(e)$ the vertex at
which $e$ starts, the vertex at which $e$ ends and the label of $e$
respectively.

\begin{definition}[$\Sigma$-trees]
Let $\Sigma$ be an alphabet. A \textit{$\Sigma$-tree} (or just
a \textit{tree} if the alphabet $\Sigma$ is clear) is a directed
tree with edges labelled by elements of $\Sigma$, and with two distinguished
vertices (the \textit{start} vertex and the \textit{end} vertex) such that
there is a (possibly empty) directed path from the start vertex to the end vertex.

A tree with only one vertex is called \textit{trivial}, while a tree with
start vertex equal to its end vertex is called \textit{idempotent}. A
tree with a single edge and distinct start and end vertices is called a
\textit{base tree}; we identify each base tree with the label of its
edge.

In any tree, the
(necessarily unique) directed path from the start vertex to the end vertex is called
the \textit{trunk} of the tree; the vertices of the graph which lie on the
trunk (including the start and end vertices) are called \textit{trunk
vertices} and the edges which lie on the trunk are called \textit{trunk edges}.
If $X$ is a tree we write $\theta(X)$ for the set of trunk
edges of $X$.
\end{definition}
\begin{figure}\label{fig_first}
\begin{picture}(89,30)
\thicklines
\setloopdiam{10}
\Large
\setvertexdiam{1}

\letvertex A=(15,5)    \drawinitialstate[b](A){}
\letvertex B=(5,15)    \drawstate(B){}
\letvertex C=(5,25)    \drawstate(C){}
\letvertex D=(25,15)    \drawstate(D){$\times$}
\drawedge(A,B){$a$}
\drawedge(B,C){$b$}
\drawedge[r](A,D){$a$}

\letvertex E=(44,5)    \drawinitialstate[b](E){}
\letvertex F=(44,15)    \drawstate(F){$\times$}
\letvertex G=(44,25)    \drawstate(G){}
\drawedge(E,F){$a$}
\drawedge(F,G){$b$}

\letvertex H=(70,5)     \drawinitialstate[b](H){}
\letvertex I=(70,15)    \drawstate(I){$\times$}
\letvertex J=(63,25)    \drawstate(J){}
\letvertex K=(77,25)    \drawstate(K){}
\letvertex L=(84,15)    \drawstate(L){}
\drawedge(H,I){$a$}
\drawedge(I,J){$b$}
\drawedge(I,K){$b$}
\drawedge[r](L,K){$b$}

\end{picture}
\caption{Some examples of $\lbrace a, b \rbrace$-trees.}
\end{figure}

Figure~1 shows three examples of $\Sigma$-trees
where $\Sigma = \lbrace a, b \rbrace$. In each case, the start vertex is
marked by an arrow-head, and the end vertex by a cross. Notice that
a vertex may have multiple edges coming in or going out with the same label,
and that in each case there is a directed path (in our examples, a single
edge) from the start vertex to the end vertex.
Figure~2 shows the trivial tree and the base trees $a$ and
$b$.

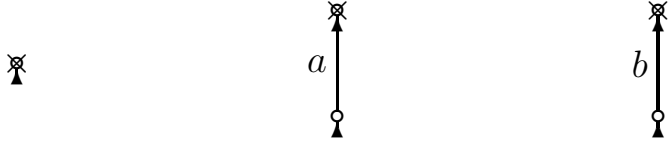
\begin{figure}\label{fig_sesqui}
\begin{picture}(70,20)
\thicklines
\setloopdiam{10}
\Large
\setvertexdiam{1}

\letvertex A=(5,10)    \drawinitialstate[b](A){$\times$}

\letvertex B=(35,5)    \drawinitialstate[b](B){}
\letvertex C=(35,15)    \drawstate(C){$\times$}
\drawedge(B,C){$a$}

\letvertex D=(65,5)    \drawinitialstate[b](D){}
\letvertex E=(65,15)    \drawstate(E){$\times$}
\drawedge(D,E){$b$}

\end{picture}
\caption{The trivial tree, the base tree $a$ and the base tree $b$.}
\end{figure}

\begin{definition}[Subtrees and morphisms]
Let $X$ and $Y$ be trees. A \textit{subtree} of $X$ is a subgraph
of $X$ containing the start and end vertices, the underlying undirected
graph of which is connected.

A \textit{morphism} $\rho : X \to Y$ of $\Sigma$-trees is a map taking
edges to edges and vertices to vertices, such that $\rho(\alpha(e)) = \alpha(\rho(e))$,
 $\rho(\omega(e)) = \omega(\rho(e))$ and $\lambda(e) = \lambda(\rho(e))$ for
all edges $e$ in $X$, and which maps the start and end vertex of $X$ to the
start and end vertex of $Y$ respectively. An
\textit{isomorphism} is a morphism which is bijective on both edges and
vertices.
\end{definition}

In our example of Figure~1, there is clearly a
unique morphism from the left-hand tree to the middle tree, and a unique
morphism taking the right-hand tree to the middle tree.  

It is easily shown that morphisms have the expected properties that the
composition of two morphisms (where defined) is again a morphism, while the restriction of a morphism to a
subtree is also a morphism. It is also easily verified (using the fact that
we consider only trees with a directed path from the start to the end vertex)
that a morphism necessarily maps the trunk edges of its domain bijectively
onto the trunk edges of its image.
Note that morphisms map $\Sigma$-trees to $\Sigma$-trees (for the same
alphabet $\Sigma$), and preserve the labelling of edges.

\begin{definition}
The set
of all isomorphism types of $\Sigma$-trees is denoted $UT^1(\Sigma)$
while the set of isomorphism types of non-trivial $\Sigma$-trees is
denoted $UT(\Sigma)$. The set of isomorphism types of idempotent
trees is denoted $UE^1(\Sigma)$, while the set of isomorphism types of
non-trivial idempotent trees is denoted $UE(\Sigma)$.
\end{definition}

Much of the time we shall be formally concerned not with trees themselves
but rather with isomorphism types. However, where no
confusion is likely, we shall for the sake of conciseness ignore the
distinction and implicitly identify trees with their respective isomorphism
types.

\begin{definition}[Retracts]
A \textit{retraction} of a tree $X$ is an idempotent morphism from
$X$ to $X$; its image is called a \textit{retract} of
$X$. A tree $X$ is called
\textit{pruned} if it does not admit a
non-identity retraction. The set of all isomorphism types of pruned trees
[respectively, non-trivial pruned trees] is denoted $T^1(\Sigma)$
[respectively, $T(\Sigma)$].
\end{definition}

Returning to our examples from Figure~1, neither the left-hand
nor middle tree admits any non-identity retraction, so these trees are pruned.
The right-hand tree admits four non-identity retractions. Note also that the
trivial tree and the base trees, examples of which are shown in Figure~2,
do not admit any non-identity retractions, and so are pruned trees.

Just as with morphisms, it is readily verified that a composition of
retractions (where defined) is a retraction, and the restriction of a retraction
to a subtree is again a retraction. The following proposition is an instance
of a well-known phenomenon, another important example of which is the uniqueness
of the \textit{core} of a finite graph \cite{Harary67,Hell92}. It can be deduced from
very general results about morphisms of finite relational structures (for example
\cite[Proposition~1.4.7]{Foniok07}) but for completeness we sketch a
simple combinatorial proof.

\begin{proposition}\label{prop_pruningconfluent}[Confluence of retracts]
For each tree $X$ there is a unique (up to isomorphism) pruned tree which
is a retract of $X$.
\end{proposition}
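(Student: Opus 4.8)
The plan is to prove existence and uniqueness separately, with uniqueness resting on the observation that a pruned tree admits no endomorphisms other than automorphisms.

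For \textbf{existence} I would argue by minimality. Since $X$ is finite, among all retracts of $X$ I can choose one, say $R = \im(\rho)$ for a retraction $\rho$ of $X$, having the least possible number of edges. I claim $R$ is pruned. If not, $R$ would admit a non-identity retraction $\tau$. Recalling that $\rho$ fixes its image $R$ pointwise (being idempotent) and that a composition of retractions is again a retraction, the map $\tau \circ \rho$ is a retraction of $X$ whose image $\tau(R)$ is a proper subtree of $R$, and hence has strictly fewer edges, contradicting minimality. The verification that $\tau \circ \rho$ is idempotent is a short calculation using $\rho|_R = \mathrm{id}_R$. Thus $R$ is a pruned retract of $X$.

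The main work is \textbf{uniqueness}, and the key step is the lemma that \emph{every endomorphism of a pruned tree is an automorphism}. To see this, let $Y$ be pruned and $\phi : Y \to Y$ a morphism. Since $Y$ is finite its endomorphism monoid is finite, so some power $\phi^k$ is idempotent; but an idempotent endomorphism is precisely a retraction, and $Y$ being pruned forces $\phi^k = \mathrm{id}_Y$. Hence $\phi$ is invertible, and so is an automorphism. Granted this lemma, suppose $R_1 = \im(\rho_1)$ and $R_2 = \im(\rho_2)$ are two pruned retracts. Since each $R_i$ is a subtree of $X$ (sharing its start and end vertices) and each $\rho_i$ maps $X$ onto $R_i$, restriction yields morphisms $\phi = \rho_2|_{R_1} : R_1 \to R_2$ and $\psi = \rho_1|_{R_2} : R_2 \to R_1$. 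The composites $\psi\phi : R_1 \to R_1$ and $\phi\psi : R_2 \to R_2$ are endomorphisms of pruned trees, hence automorphisms by the lemma. From $\psi\phi$ bijective I obtain that $\phi$ is injective and $\psi$ surjective, and from $\phi\psi$ bijective that $\phi$ is surjective and $\psi$ injective; thus $\phi$ is a bijective morphism, that is, an isomorphism $R_1 \cong R_2$.

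The step I expect to require the most care is the key lemma, specifically the passage from ``some power of $\phi$ is idempotent'' to ``$\phi$ is invertible'', which uses finiteness of the tree in an essential way together with the defining property of pruned trees. Everything else reduces to bookkeeping about restrictions and composites of morphisms, which the preceding discussion has already observed to behave well.
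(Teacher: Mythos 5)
Your proof is correct and takes essentially the same route as the paper: existence via a retract of minimal size, and uniqueness by composing the two retractions, extracting an idempotent power by finiteness, and using prunedness to force that power to be the identity, so that the restricted maps are mutually inverse isomorphisms. Your key lemma (every endomorphism of a pruned tree is an automorphism) is simply a tidy repackaging of the step the paper performs directly on the restriction of $(\pi_1 \pi_2)^n$ to the pruned image.
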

\begin{proof}
Clearly since $X$ is finite it has a retract of
minimal size, which must be a pruned tree. Suppose now that
$\pi_1 : X \to X$ and $\pi_2 : X \to X$ are both retractions with
pruned images $Y$ and $Z$ respectively. Consider the two
compositions $\pi_1 \pi_2$ and $\pi_2 \pi_1$; since they are maps
on a finite set we may choose a positive integer $n$ such that $(\pi_1 \pi_2)^n$
and $(\pi_2 \pi_1)^n$ are idempotent morphisms, that is, retractions of $X$.
A straightforward argument shows
that the restriction of $(\pi_1 \pi_2)^n$ to $Y$ is a retraction of $Y$, which
since $Y$ is pruned means it must be the identity map on $Y$. Dually, the restriction
of $(\pi_2 \pi_1)^n$ to $Z$ is the identity map on $Z$. It now follows easily
that $\pi_1$ and $\pi_2(\pi_1 \pi_2)^{n-1}$ restrict to give mutually inverse
isomorphisms between $Y$ and $Z$, as required.
\end{proof}

Proposition~\ref{prop_pruningconfluent} explains why we must
formally work with isomorphism types of trees rather than trees themselves:
choices made during the process of ``pruning'' may result in distinct but
isomorphic pruned trees, and it is necessary that we view these as
the same object.

\begin{definition}[Pruning of a tree]
Let $X \in UT^1(\Sigma)$. Then the \textit{pruning} of $X$ is the unique
(by Proposition~\ref{prop_pruningconfluent}) element of $T^1(\Sigma)$
which can be obtained from $X$ by pruning. It is denoted $\ol{X}$.
\end{definition}

Considering again our examples from Figure~1, we have already
seen that the left-hand and middle trees are pruned, and so each is (or
more properly, the isomorphism type of each is) its own pruning. Two of
the four retracts of the right-hand tree are pruned; Proposition~\ref{prop_pruningconfluent} tells us that
these must be isomorphic, and indeed they are both isomorphic to the middle
tree of Figure~1. Hence, the pruning of the right-hand tree
is (the isomorphism type of) the middle tree.

\section{Algebra on Trees}\label{sec_algebra}

We now define some operations on isomorphism types of trees.

\begin{definition}[Unpruned operations]
We define a product operation, called \textit{unpruned multiplication}, on
$UT^1(\Sigma)$ as follows. For $X, Y \in UT^1(\Sigma)$, choose representative
$\Sigma$-trees $X'$ for $X$ and $Y'$ for $Y$ such that $X' \cap Y' = \lbrace v \rbrace$ where
$v$ is the end vertex of $X'$ and the start vertex of $Y'$. Then the
\textit{unpruned product} $X \times Y$ is the isomorphism type of the
tree with graph $X' \cup Y'$ (with the maps $\alpha$, $\omega$ and $\lambda$
extending the corresponding maps in $X'$ and $Y'$) considered as a
$\Sigma$-tree with start vertex the start vertex of $X'$ and end vertex the
end vertex of $Y'$.

We also define two unary operations on $UT^1(\Sigma)$, called
\textit{unpruned $(+)$} and \textit{unpruned $(*)$}. If $X'$ is a representative
$\Sigma$-tree for $X \in UT^1(\Sigma)$ then $X^{(+)}$ is the isomorphism type
of the idempotent tree with the same underlying graph
and start vertex as $X'$, but with end vertex the start vertex of $X'$. 
Dually, $X^{(*)}$ is the isomorphism type of the idempotent tree with the
same underlying graph and end
vertex as $X'$, but with start vertex the end vertex of $X'$.
\end{definition}

The above definition is rendered rather technical by the formal need to work
with representatives of isomorphism types, but intuitively the
operations are very simple. For example, unpruned multiplication
simply means ``gluing together'' two trees by identifying the end vertex 
of one with the start vertex of the other. The following proposition 
gives some elementary properties of these operations.

\begin{proposition}\label{prop_unprunedproperties}
Unpruned multiplication is an associative binary operation
on the set $UT^1(\Sigma)$ of isomorphism types of $\Sigma$-trees. The isomorphism type of the trivial tree is an identity element for this operation. The set
$UT(\Sigma)$ of isomorphism types of non-trivial trees forms a subsemigroup
of $UT^1(\Sigma)$.

The maps
$X \mapsto X^{(+)}$ and $X \mapsto X^{(*)}$ are 
idempotent unary operations on $UT^1(\Sigma)$. The subsemigroup
generated by the images of these unary operations is commutative.
\end{proposition}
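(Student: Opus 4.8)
The plan is to verify each of the four assertions by working directly with representative $\Sigma$-trees and checking that the isomorphism type of the result is independent of the choices made, which is essentially a routine but careful exercise in ``gluing'' arguments. The key preliminary observation is that when one forms the unpruned product $X \times Y$ by choosing representatives meeting in a single vertex $v$ (the end vertex of $X'$ identified with the start vertex of $Y'$), the resulting isomorphism type does not depend on the choice of representatives: any two such choices give canonically isomorphic glued trees, since the isomorphism between two representatives of $X$ (respectively $Y$) must carry start to start and end to end, and hence extends across the glue point. I would state this well-definedness explicitly as the first step, since all subsequent arguments rely on it.

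First I would treat associativity. Given $X, Y, Z$, I would choose representatives $X', Y', Z'$ that are pairwise disjoint except that the end vertex of $X'$ equals the start vertex of $Y'$, and the end vertex of $Y'$ equals the start vertex of $Z'$. Then both $(X \times Y) \times Z$ and $X \times (Y \times Z)$ are represented by the single glued graph $X' \cup Y' \cup Z'$ with start vertex that of $X'$ and end vertex that of $Z'$; comparing the two bracketings amounts to observing that gluing is literally union of disjoint graphs along a shared vertex, which is associative on the nose. The identity claim is immediate: gluing the one-vertex trivial tree onto either end of $X'$ changes nothing up to isomorphism. That $UT(\Sigma)$ is a subsemigroup follows because a product of two trees each having a trunk edge (non-triviality) again has a trunk edge, so the product of two non-trivial trees is non-trivial.

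For the unary operations, idempotence of $X \mapsto X^{(+)}$ is the main point requiring a moment's thought. By definition $X^{(+)}$ has the same underlying graph as $X'$ but with end vertex moved to the start vertex, so $X^{(+)}$ is idempotent as a tree (start equals end). Applying $(+)$ again takes the same underlying graph and again sets the end vertex equal to the start vertex---but the start vertex is unchanged, so $(X^{(+)})^{(+)} = X^{(+)}$. The dual argument handles $X \mapsto X^{(*)}$. These are the steps I expect to be genuinely the easiest, since moving the end vertex to a vertex it already occupies is visibly a no-op.

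The assertion I expect to be the main obstacle is commutativity of the subsemigroup generated by the images of $(+)$ and $(*)$: that is, that any product of idempotent trees (those with start vertex equal to end vertex) commutes. The natural approach is to show that for idempotent trees $E$ and $F$, both $E \times F$ and $F \times E$ are represented by gluing the two underlying graphs at the common start$=$end vertex, with start and end both at that glue point; since the glued graph $E' \cup F'$ is the same undirected object regardless of the order of gluing, and in both products the start and end vertices coincide at the single glue vertex, the two isomorphism types agree. The subtlety to address carefully is that the elements of the generated subsemigroup are products of \emph{several} idempotent trees, so I would note that any such product is idempotent (its start and end both lie at the initial glue vertex), and then argue by a bouquet-of-trees picture: a product $E_1 \times \cdots \times E_n$ of idempotent trees is represented by attaching all the $E_i'$ at a single common vertex, and this configuration depends only on the multiset $\{E_1, \dots, E_n\}$, not the order. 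Establishing that the generated semigroup consists exactly of such ``bouquets'' and that bouquets are order-independent is where the real content lies, and I would prove it by induction on $n$ using the single-glue-vertex description together with the associativity already established.
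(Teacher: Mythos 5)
Your proposal is correct in substance and takes essentially the same route as the paper, which disposes of the whole proposition in two sentences as ``easily seen'' from the definitions; your write-up is just the fully detailed version of that verification (well-definedness of $\times$ on isomorphism types, three-fold gluing for associativity, the trivial tree as identity, and gluing idempotent trees at their common start-equals-end vertex for commutativity).

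One slip worth correcting: in the subsemigroup step you equate non-triviality with ``having a trunk edge,'' but non-trivial means only that the tree has more than one vertex, equivalently at least one edge. An idempotent tree such as $a^{(+)}$ is non-trivial yet has \emph{no} trunk edges, so your argument as stated does not cover products with such factors. The repair is immediate and is surely what the paper intends: the underlying graph of the glued representative $X' \cup Y'$ contains every edge of both factors, so the product of two trees each having at least one edge again has at least one edge, hence is non-trivial. Separately, your closing induction on ``bouquets'' is more than is needed: once you know any two idempotent trees commute under $\times$ and that their product is again an idempotent tree, the subsemigroup generated by pairwise commuting elements is automatically commutative, so the multiset analysis, while correct, can be dropped.
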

\begin{proof}
It is easily seen that unpruned multiplication is associative, that
the trivial tree acts as an identity, and that the product of two non-trivial
trees is never trivial. Finally, the images of the $(+)$ and $(*)$ operations
are by definition idempotent trees, and it is immediate from the definitions
that multiplication of idempotent trees is commutative.
\end{proof}

\begin{definition}[Pruned operations]
Let $X$ and $Y$ be isomorphism types of pruned trees. Then we define $XY = \ol{X \times Y}$,
$X^* = \ol{X^{(*)}}$ and $X^+ = \ol{X^{(+)}}$.
\end{definition}

Returning to our example trees from Figure~1, and recalling that we
identify the letters $a$ and $b$ with the corresponding base trees (as
shown in Figure~2), we see that the trees depicted
correspond to the unpruned expressions $(a \times b)^{(+)} \times a$,
$a \times b^{(+)}$ and $a \times b^{(+)} \times (b \times b^{(*)})^{(+)}$
respectively.

\begin{proposition}\label{prop_prunedproperties}
Pruned multiplication is a well-defined binary operation on the set
$T^1(\Sigma)$ of isomorphism types of pruned trees. The unary operations
$*$ and $+$ are well-defined idempotent unary operations on the set $T^1(\Sigma)$
of isomorphism types of pruned trees.
\end{proposition}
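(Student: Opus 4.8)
The plan is to treat well-definedness and idempotence separately, exploiting that each pruned operation is by construction a composite of a well-defined unpruned operation with the pruning map $Z \mapsto \ol{Z}$. For well-definedness, I would note that $X \times Y$, $X^{(+)}$ and $X^{(*)}$ are well-defined operations on isomorphism types in $UT^1(\Sigma)$ by Proposition~\ref{prop_unprunedproperties}, while by Proposition~\ref{prop_pruningconfluent} the assignment $Z \mapsto \ol{Z}$ is a well-defined map from $UT^1(\Sigma)$ into $T^1(\Sigma)$, always returning a pruned isomorphism type. Since pruned trees are in particular elements of $UT^1(\Sigma)$, for $X, Y \in T^1(\Sigma)$ the expressions $\ol{X \times Y}$, $\ol{X^{(+)}}$ and $\ol{X^{(*)}}$ are unambiguously defined elements of $T^1(\Sigma)$; as composites of operations already known to respect isomorphism types, they depend on nothing beyond the isomorphism types of $X$ and $Y$. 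This settles that the three pruned operations are well-defined operations of the claimed arities on $T^1(\Sigma)$.

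For idempotence of $+$ (the argument for $*$ being dual), the plan is to show that $X^+$ is itself an idempotent pruned tree, so that applying $+$ a second time changes nothing. First, $X^{(+)}$ is idempotent by definition, its start and end vertices both being the start vertex of a representative $X'$. I would then check that pruning preserves idempotence: by the definition of morphism, any retraction carries the start vertex to the start vertex and the end vertex to the end vertex, so if these coincide then the retract again has equal start and end vertices; hence $X^+ = \ol{X^{(+)}}$ is idempotent. Next I would record the elementary fact that the $(+)$ operation fixes every idempotent tree $Z$: by definition $Z^{(+)}$ has the same underlying graph and start vertex as $Z$ and end vertex equal to that start vertex, which for idempotent $Z$ is already its end vertex, so $Z^{(+)} = Z$. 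Applying this with $Z = X^+$ gives $(X^+)^{(+)} = X^+$, and since $X^+$ is already pruned we have $\ol{X^+} = X^+$. Combining, $(X^+)^+ = \ol{(X^+)^{(+)}} = \ol{X^+} = X^+$, as required.

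The well-definedness half is essentially immediate once Propositions~\ref{prop_unprunedproperties} and~\ref{prop_pruningconfluent} are in hand, since it only requires recognising the pruned operations as composites of maps already known to respect isomorphism types. The only point warranting genuine care is the claim that pruning preserves idempotence, which rests on the observation that a morphism, and hence a retraction, necessarily sends start and end vertices to start and end vertices, so that the retract of an idempotent tree is idempotent. With that observation secured, the idempotence of $+$ and $*$ follows formally, and I expect no further obstacle.
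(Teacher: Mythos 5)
Your proof is correct and follows essentially the same route as the paper, whose proof is simply the one-line remark that the claims ``follow easily from Proposition~\ref{prop_unprunedproperties}'': you have just made the intended easy details explicit, namely well-definedness via Proposition~\ref{prop_pruningconfluent} composed with the unpruned operations, and idempotence via the observations that retractions preserve start and end vertices (so pruning preserves idempotent trees) and that $(+)$ and $(*)$ fix idempotent trees. No gap to report.
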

\begin{proof}
The claims follow easily from Proposition~\ref{prop_unprunedproperties}.
\end{proof}

We are now ready to prove a basic but important foundational result.
\begin{theorem}\label{thm_morphism}
The pruning map
$$UT^1(\Sigma) \to T^1(\Sigma), \ X \mapsto \ol{X}$$
is a surjective $(2,1,1,0)$-morphism from the set of isomorphism types of
$\Sigma$-trees under unpruned multiplication, unpruned $(*)$ and unpruned $(+)$ with distinguished
identity element to
the set of isomorphism types of pruned trees under pruned multiplication,
$*$ and $+$ with distinguished identity element.
\end{theorem}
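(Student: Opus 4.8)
The plan is to verify that the pruning map $X \mapsto \ol{X}$ respects the four pieces of structure---the identity, the binary multiplication, and the two unary operations---and separately to establish surjectivity. Surjectivity is immediate: every pruned tree is in particular an isomorphism type of $\Sigma$-tree, and it is its own pruning (a pruned tree admits no non-identity retraction, so by uniqueness in Proposition~\ref{prop_pruningconfluent} its pruning is itself), so the map restricts to the identity on $T^1(\Sigma)$ and is therefore onto. The identity element is preserved because the trivial tree is pruned and hence fixed. The genuine content is the multiplicativity, namely $\ol{X \times Y} = \ol{X}\,\ol{Y} = \ol{\ol{X} \times \ol{Y}}$, together with the analogous statements $\ol{X^{(+)}} = (\ol{X})^+ = \ol{(\ol{X})^{(+)}}$ and dually for $(*)$.

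The key technical lemma I would isolate first is that pruning is insensitive to replacing a factor by its pruning: if $\pi : X \to X$ is a retraction with pruned image $\ol{X}$, I want to show $\ol{X \times Y} = \ol{\ol{X} \times Y}$, and symmetrically on the right. The natural way to see this is to build retractions of the unpruned products out of the retraction $\pi$. Concretely, given a retraction $\pi : X \to X$ fixing the shared glueing vertex (the end vertex of $X$, which after glueing is the point where $Y$ is attached), one extends $\pi$ to a map on the glued graph $X \times Y$ by letting it act as $\pi$ on the $X$-part and as the identity on the $Y$-part. Because $\pi$ fixes the end vertex of $X$, these two pieces agree at the glueing vertex, so the extension is a well-defined morphism of the product tree, and it is idempotent because $\pi$ is; hence it is a retraction of $X \times Y$ whose image is $\ol{X} \times Y$. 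By confluence (Proposition~\ref{prop_pruningconfluent}) the unique pruned retract of $X \times Y$ equals the unique pruned retract of its retract $\ol{X} \times Y$, giving $\ol{X \times Y} = \ol{\ol{X} \times Y}$. The mirror-image construction, extending a retraction of $Y$ by the identity on $X$, handles the right-hand factor, and combining the two yields $\ol{X \times Y} = \ol{\ol{X} \times \ol{Y}}$. Setting $X, Y$ pruned then gives exactly $\ol{X \times Y} = \ol{\ol{X}\,\times\,\ol{Y}}$, which unwinds to $\ol{X \times Y} = \ol{X}\,\ol{Y}$ by the definition of pruned multiplication, so the map is multiplicative.

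For the unary operations the same mechanism applies, only more cleanly: the operation $X \mapsto X^{(+)}$ leaves the underlying graph and edge-labelling completely unchanged, merely relocating the end vertex to coincide with the start vertex, and $X \mapsto X^{(*)}$ symmetrically relocates the start vertex. A retraction $\pi : X \to X$ automatically fixes the start and end vertices (it must, by the definition of morphism of $\Sigma$-trees), so the very same vertex-and-edge map $\pi$ is also a retraction of $X^{(+)}$ and of $X^{(*)}$. Thus a pruned retract of $X$ yields a pruned retract of $X^{(+)}$ with the same graph, and confluence gives $\ol{X^{(+)}} = \ol{(\ol{X})^{(+)}}$, which is the identity $\ol{X^{(+)}} = (\ol{X})^+$ in the notation of the pruned operations; the $(*)$ case is dual.

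I expect the main obstacle to be the careful bookkeeping in the glueing step: one must check that extending a retraction of one factor by the identity on the other genuinely produces a \emph{morphism of the product $\Sigma$-tree} (well-definedness at the shared vertex, preservation of $\alpha$, $\omega$, and labels, and correct handling of the start and end vertices of the product), and that its image is precisely $\ol{X} \times Y$ as an isomorphism type rather than merely something abstractly isomorphic. Everything else is a routine application of the confluence proposition and the definitions, but this compatibility check is where the argument has to be made honestly rather than waved through, since it is exactly the point at which the combinatorics of glued trees interacts with the idempotency of retractions.
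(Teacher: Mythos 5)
Your proposal is correct and follows essentially the same route as the paper: extend retractions of the factors to retractions of the unpruned product (using that retractions fix the start and end vertices, hence the glueing vertex), observe that the unary operations leave the underlying graph unchanged so the same retraction works for $X^{(+)}$ and $X^{(*)}$, and finish each case with Proposition~\ref{prop_pruningconfluent}. The only differences are cosmetic---the paper extends $\pi_X$ and $\pi_Y$ simultaneously to one retraction of $X \times Y$ where you do it in two one-sided stages, and your phrase ``pruned retract of $X^{(+)}$'' should read merely ``retract'' (the image $\ol{X}^{(+)}$ need not be pruned), but your subsequent equation chain applies confluence correctly so nothing is lost.
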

\begin{proof}
First notice that every isomorphism type $X \in T^1(\Sigma)$ of pruned
trees is also an isomorphism type of $\Sigma$-trees and satisfies
$\ol{X} = X$; thus, the given map is surjective.

Now let $X$ and $Y$ be unpruned trees. Let $\pi_X : X \to X$ and
$\pi_Y : Y \to Y$ be retractions with images $\ol{X}$
and $\ol{Y}$ respectively.

We show first that $\ol{X} \ \ol{Y} = \ol{X \times Y}$. 
 Notice that since the amalgamated vertex in
the unpruned product $X \times Y$ is the end vertex of $X$ and the start
vertex of $Y$, it is fixed by both
$\pi_X$ and $\pi_Y$; it follows that there is a (necessarily unique, since
every vertex and edge of $X \times Y$ comes from $X$ or $Y$) map
$\pi : X \times Y \to X \times Y$ which extends both $\pi_X$ and $\pi_Y$.
Clearly, $\pi$ is a morphism. Since
$\pi_X$ and $\pi_Y$ are idempotent and at least one of them is defined on
each vertex and edge of $X \times Y$, we see also that $\pi$ is idempotent,
and hence is a retraction. Moreover, it follows immediately
from the definition of unpruned multiplication that $\pi(X \times Y) = \ol{X} \times \ol{Y}$.
But now by Proposition~\ref{prop_pruningconfluent} we have
$$\ol{X \times Y} = \ol{\pi(X \times Y)} = \ol{\ol{X} \times \ol{Y}} = \ol{X} \ \ol{Y}.$$

Next we claim that $\ol{X}^+ = \ol{X^{(+)}}$. First notice that, since
$X^{(+)}$ has the same underlying labelled directed graph as $X$, the
same start vertex, and end vertex the start vertex of $X$, the retraction
$\pi_X$ of $X$ is also a retraction of $X^{(+)}$.
Clearly its image is the tree $\ol{X}^{(+)}$.
Hence by Proposition~\ref{prop_pruningconfluent} again we have
$$\ol{X^{(+)}} = \ol{\pi_X(X^{(+)})} = \ol{\ol{X}^{(+)}} = \ol{X}^+$$
as required.
A dual argument shows that $\ol{X}^* = \ol{X^{(*)}}$. Finally, we have
shown that pruning is a surjective semigroup morphism of monoids, so it
must preserve the identity. Thus, it is a $(2,1,1,0)$-morphism.
\end{proof}

From Theorem~\ref{thm_morphism} we deduce immediately that the
$(2,1,1,0)$-algebra $T^1(\Sigma)$ inherits a number of properties which
were obvious in $UT^1(\Sigma)$ but perhaps less so in $T^1(\Sigma)$.
\begin{corollary}\label{cor_prunedproperties}
Pruned multiplication is an associative operation on $T^1(\Sigma)$. The
unary operations $*$ and $+$ are idempotent; the subsemigroup generated
by their images is commutative.
\end{corollary}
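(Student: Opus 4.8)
The plan is to exploit the fact, established in Theorem~\ref{thm_morphism}, that pruning is a \emph{surjective} $(2,1,1,0)$-morphism from $UT^1(\Sigma)$ onto $T^1(\Sigma)$, together with the observation that each of the three asserted properties already holds in $UT^1(\Sigma)$ by Proposition~\ref{prop_unprunedproperties}. The guiding principle is entirely standard: a property of an algebra expressible as an identity (an equational law in the operations) is inherited by every homomorphic image, and in particular by the image of a surjective morphism. Since associativity and idempotence of the unary operations are exactly such identities, the proof reduces to a short transport argument rather than any new combinatorics on trees.

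Concretely, for associativity I would start from $(A \times B) \times C = A \times (B \times C)$ in $UT^1(\Sigma)$ and apply the pruning morphism $\phi$; using that $\phi$ carries unpruned multiplication to pruned multiplication, the left side becomes $(\phi(A)\,\phi(B))\,\phi(C)$ and the right side $\phi(A)\,(\phi(B)\,\phi(C))$, so these agree for all $A,B,C$. Because $\phi$ is surjective, every triple of pruned trees arises as $(\phi(A),\phi(B),\phi(C))$, yielding $(XY)Z = X(YZ)$ for all $X,Y,Z \in T^1(\Sigma)$. The idempotence of $+$ and $*$ is handled identically, starting from $(A^{(+)})^{(+)} = A^{(+)}$ and its dual and pushing them through $\phi$, which sends unpruned $(+)$ to $+$ and unpruned $(*)$ to $*$.

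The one point deserving a little care is the claim about the subsemigroup generated by the images of the unary operations, since this concerns a \emph{generated} subsemigroup rather than a bare identity. Here I would argue that the subsemigroup $E \subseteq T^1(\Sigma)$ generated by $\{X^+, X^* : X \in T^1(\Sigma)\}$ is precisely the image under $\phi$ of the subsemigroup $E'$ of $UT^1(\Sigma)$ generated by the images of unpruned $(+)$ and $(*)$: surjectivity of $\phi$ ensures that the generators of $E$ are exactly the $\phi$-images of the generators of $E'$, and since $\phi$ is a semigroup morphism it maps the subsemigroup they generate onto the subsemigroup generated by their images. As $E'$ is commutative by Proposition~\ref{prop_unprunedproperties}, its homomorphic image $E$ is commutative as well.

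I do not anticipate any serious obstacle: all the substantive work has already been carried out in Theorem~\ref{thm_morphism}, and this corollary is a routine consequence. The only mild subtlety is the generated-subsemigroup argument just described, which genuinely uses surjectivity rather than merely the morphism property; everything else is a direct transfer of equational laws along a surjection.
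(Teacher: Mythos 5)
Your proof is correct and follows exactly the route the paper intends: the paper states this corollary as an immediate consequence of Theorem~\ref{thm_morphism}, deducing each property of $T^1(\Sigma)$ by transporting the corresponding property of $UT^1(\Sigma)$ (Proposition~\ref{prop_unprunedproperties}) along the surjective pruning morphism. Your explicit treatment of the generated-subsemigroup point, showing that the idempotent-generated subsemigroup of $T^1(\Sigma)$ is the image of the corresponding subsemigroup of $UT^1(\Sigma)$, is precisely the detail the paper leaves implicit in the word ``immediately.''
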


As well as providing a theoretical underpinning for what we wish to do,
Theorem~\ref{thm_morphism} is extremely useful for computational purposes; it
means that complex expressions involving pruned trees in $T^1(\Sigma)$ can
be computed by first evaluating them in $UT^1(\Sigma)$ using unpruned
operations and then pruning the resulting tree only at the end. Since pruning is
the hardest part of such a computation, this can result in significant
efficiency savings.

Figure~3 shows some more examples of $\lbrace a, b \rbrace$-trees,
namely the elements of $UT^1(\lbrace a, b \rbrace)$
corresponding to the unpruned expressions
$$(a \times (b^{(+)} \times a)^{(*)})^{(+)} \times b \ \ \text{ and } \ \ a^{(+)} \times b$$
respectively. Notice that the right-hand tree is pruned, while left-hand tree
admits a retract isomorphic to the right-hand tree. This means
that we have
$$(a (b^+ a)^*)^+ b = 
\ol{(a \times (b^{(+)} \times a)^{(*)})^{(+)} \times b}
= \ol{a^{(+)} \times b}
= a^+ b$$
in the monoid $T^1(\lbrace a, b \rbrace)$. We shall see later that
$T^1(\lbrace a, b \rbrace)$ is actually a free adequate monoid, freely
generated by the base trees, so it follows
that the identity $(A(B^+ A)^*)^+ B = A^+ B$ holds in every adequate
monoid and semigroup. (The reader may wish to try verifying this directly
from the axioms for adequate semigroups.)

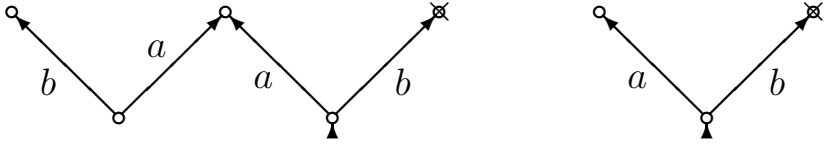
\begin{figure}\label{fig_second}
\begin{picture}(95,20)
\thicklines
\setloopdiam{10}
\Large
\setvertexdiam{1}

\letvertex A=(35,5)    \drawinitialstate[b](A){}
\letvertex B=(45,15)   \drawstate(B){$\times$}
\letvertex C=(25,15)   \drawstate(C){}
\letvertex D=(15,5)    \drawstate(D){}
\letvertex E=(5,15)   \drawstate(E){}
\drawedge[r](A,B){$b$}
\drawedge(A,C){$a$}
\drawedge(D,C){$a$}
\drawedge(D,E){$b$}

\letvertex F=(70,5)    \drawinitialstate[b](F){}
\letvertex G=(80,15)   \drawstate(G){$\times$}
\letvertex H=(60,15)   \drawstate(H){}
\drawedge[r](F,G){$b$}
\drawedge(F,H){$a$}
\end{picture}
\caption{The trees $(a \times (b^{(+)} \times a)^{(*)})^{(+)} \times b$ and $a^{(+)} \times b = a^+ b$
respectively.}
\end{figure}

Our next objective is to establish some algebraic properties of the
$(2,1,1,0)$-algebra $T^1(\Sigma)$ of pruned trees over a given alphabet
$\Sigma$.

\begin{proposition}\label{prop_idempotentsfix}
Let $X \in T^1(\Sigma)$. Then $X^+X = X = X X^*$.
\end{proposition}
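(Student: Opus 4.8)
The plan is to prove $X^+ X = X$ (the equation $X = X X^*$ being entirely dual) by unwinding both sides through the pruning morphism of Theorem~\ref{thm_morphism} and reducing the claim to an elementary statement about unpruned trees. Since $X$ is pruned we have $\ol{X} = X$, and by the definition of the pruned operations together with Theorem~\ref{thm_morphism} we compute $X^+ X = \ol{X^{(+)}} \, \ol{X} = \ol{X^{(+)} \times X}$. Thus it suffices to show that the \emph{unpruned} tree $X^{(+)} \times X$ prunes to $X$; that is, I must exhibit a retraction of $X^{(+)} \times X$ whose image is isomorphic to $X$, and then invoke the uniqueness supplied by Proposition~\ref{prop_pruningconfluent}.

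First I would describe the unpruned tree $X^{(+)} \times X$ concretely. Recall that $X^{(+)}$ has the same underlying labelled directed graph as $X$, the same start vertex, but end vertex relocated to the start vertex of $X$; it is idempotent. Forming the unpruned product $X^{(+)} \times X$ glues a copy of $X^{(+)}$ to a copy of $X$ by identifying the end vertex of $X^{(+)}$ (which is the start vertex of the $X^{(+)}$-copy) with the start vertex of $X$. The resulting tree therefore consists of two copies of the graph of $X$ sharing a single vertex, namely the common start vertex, with overall start vertex the start vertex of the $X^{(+)}$-copy and overall end vertex the end vertex of the $X$-copy.

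The key step is to define a morphism $\pi : X^{(+)} \times X \to X^{(+)} \times X$ that folds the $X^{(+)}$-copy onto the $X$-copy. Concretely, $\pi$ fixes every vertex and edge of the $X$-copy, and sends each vertex and edge of the $X^{(+)}$-copy to the corresponding vertex and edge of the $X$-copy under the canonical identification of their underlying graphs. I would verify that $\pi$ is a well-defined morphism of $\Sigma$-trees: it respects $\alpha$, $\omega$ and $\lambda$ because the identification of the two graphs of $X$ is label- and direction-preserving, it is consistent on the shared vertex because that vertex is precisely the start vertex of each copy (which map to one another), and it carries the overall start vertex to the start vertex of the $X$-copy while fixing the overall end vertex. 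Since $\pi$ acts as the identity on its image, the $X$-copy, it is idempotent and hence a retraction, with image isomorphic to $X$.

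The main obstacle is confirming that $\pi$ correctly sends the distinguished start vertex of $X^{(+)} \times X$ to the start vertex of the image subtree, so that the image is genuinely the $\Sigma$-tree $X$ and not merely an isomorphic copy with the wrong basepoints; here I rely on the fact that in $X^{(+)}$ the end vertex coincides with the start vertex of $X$, which is exactly the glue vertex, so the whole $X^{(+)}$-copy collapses onto $X$ fixing that vertex. Once the retraction is in hand, Proposition~\ref{prop_pruningconfluent} gives $\ol{X^{(+)} \times X} = \ol{\pi(X^{(+)} \times X)} = \ol{X} = X$, and combining this with the computation above yields $X^+ X = X$. The identity $X = X X^*$ follows by the dual argument, folding the trailing $X^{(*)}$-copy back onto $X$.
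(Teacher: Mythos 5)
Your proof is correct and takes essentially the same route as the paper's: both reduce $X^+X$ to $\ol{X^{(+)} \times X}$ via Theorem~\ref{thm_morphism}, observe that this product is two copies of $X$ glued at their start vertices, fold the $X^{(+)}$-copy onto the $X$-copy by an explicit idempotent morphism, and conclude with Proposition~\ref{prop_pruningconfluent}, handling $X = XX^*$ by duality. Your extra care about the basepoints of the image (the glue vertex being both the overall start vertex and the start vertex of the fixed copy) is exactly the point the paper leaves implicit.
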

\begin{proof}
We prove that $X^+ X = X$, the claim that $X X^* = X$ being dual.
By Theorem~\ref{thm_morphism} we have
$$X^+ X = \ol{X^{(+)} \times X}.$$
Consider the unpruned product $X^{(+)} \times X$. It follows straight
from the definitions of unpruned operations that this
consists of two copies ($X_1$ and $X_2$ say) of the tree $X$, with their
start vertices identified, and with start vertex this start vertex and
end vertex the end vertex of $X_1$. Define a map
$$\pi : X^{(+)} \times X \to X^{(+)} \times X$$
which fixes $X_1$ and maps each edge [vertex] of $X_2$ onto the corresponding
edge [vertex] of $X_1$. Then $\pi$ is a retraction of
$X^{(+)} \times X$ with image $X_1$ which is isomorphic to $X$.
Hence by Proposition~\ref{prop_pruningconfluent} we have
$$\ol{X^{(+)} \times X}
 = \ol{\pi(X^{(+)} \times X)}
 = \ol{X}.$$
\end{proof}

The following proposition justifies the name we have given to idempotent
trees.
\begin{proposition}\label{prop_idempotents}
For any $X \in T^1(\Sigma)$ the following are equivalent:
\begin{itemize}
\item[(i)] $X$ is an idempotent tree;
\item[(ii)] $X$ is an idempotent element under pruned multiplication;
\item[(iii)] $X = X^+$;
\item[(iv)] $X = Y^+$ for some $Y \in T^1(\Sigma)$;
\item[(v)] $X = X^*$;
\item[(vi)] $X = Y^*$ for some $Y \in T^1(\Sigma)$.
\end{itemize}
\end{proposition}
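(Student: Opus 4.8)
The plan is to prove the equivalence of the six conditions by establishing a cycle of implications, together with a few direct equivalences, exploiting the duality between the $+$ and $*$ operations to halve the work. Since the statement is manifestly symmetric under interchanging $+$ with $*$ (and hence (iii)--(iv) with (v)--(vi)), I would first prove the chain (i) $\Rightarrow$ (iii) $\Rightarrow$ (iv) $\Rightarrow$ (ii) $\Rightarrow$ (i), and then observe that the dual chain involving (v) and (vi) follows by symmetry, leaving only the need to thread (v) or (vi) into the cycle.

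First I would show (i) $\Rightarrow$ (iii). If $X$ is an idempotent tree, its start and end vertices coincide. By the definition of unpruned $(+)$, the tree $X^{(+)}$ has the same underlying graph and start vertex as $X$ but its end vertex moved to the start vertex; since $X$ already has these vertices equal, $X^{(+)}$ is isomorphic to $X$ as a $\Sigma$-tree. As $X$ is pruned we get $X^+ = \ol{X^{(+)}} = \ol{X} = X$. The implication (iii) $\Rightarrow$ (iv) is trivial, taking $Y = X$. For (iv) $\Rightarrow$ (ii), suppose $X = Y^+$; by Proposition~\ref{prop_unprunedproperties} the unary operation $X \mapsto X^{(+)}$ is idempotent on $UT^1(\Sigma)$, and pruning is a $(2,1,1,0)$-morphism by Theorem~\ref{thm_morphism}, so $X^+$ is an idempotent element of $T^1(\Sigma)$ under pruned multiplication; since $X = Y^+ = (Y^+)^+ = X^+$, multiplication gives $XX = X^+ X^+ = X^+ = X$, so $X$ is idempotent under pruned multiplication.

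The step I expect to be the main obstacle is (ii) $\Rightarrow$ (i): showing that an element idempotent under pruned multiplication is genuinely an idempotent tree, i.e.\ has coincident start and end vertices. Here I would argue by analysing the trunk. By Theorem~\ref{thm_morphism}, $XX = \ol{X \times X}$, and the unpruned product $X \times X$ glues two copies of $X$ end-to-start, so its trunk is the concatenation of the two trunks of $X$ and has length exactly twice that of the trunk of $X$. Since morphisms map trunk edges bijectively onto trunk edges (as noted in the text after the definition of morphisms), pruning cannot change the length of the trunk, so the trunk length of $\ol{X \times X} = XX$ is twice that of $X$. If $X = XX$ then these trunk lengths are equal, forcing the trunk length of $X$ to be zero, which means the start and end vertices of $X$ coincide and $X$ is idempotent. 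This completes the cycle (i) $\Rightarrow$ (iii) $\Rightarrow$ (iv) $\Rightarrow$ (ii) $\Rightarrow$ (i). Finally, (i) $\Rightarrow$ (v) follows by the argument dual to (i) $\Rightarrow$ (iii), (v) $\Rightarrow$ (vi) is trivial, and (vi) $\Rightarrow$ (i) follows by the dual of (iv) $\Rightarrow$ (ii) combined with (ii) $\Rightarrow$ (i) already established; thus all six conditions are equivalent.
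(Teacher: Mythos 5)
Your cycle has one genuine gap, in the step (iv) $\Rightarrow$ (ii). You assert that $X^+X^+ = X^+$ because the unary operation $X \mapsto X^{(+)}$ is idempotent on $UT^1(\Sigma)$ (Proposition~\ref{prop_unprunedproperties}) and pruning is a $(2,1,1,0)$-morphism (Theorem~\ref{thm_morphism}). These two facts do not yield multiplicative idempotency: the idempotency in Proposition~\ref{prop_unprunedproperties} is idempotency \emph{of the unary operation}, i.e.\ $(X^{(+)})^{(+)} = X^{(+)}$, not idempotency of the element $X^{(+)}$ under multiplication. Indeed, in $UT^1(\Sigma)$ a non-trivial idempotent tree is \emph{never} multiplicatively idempotent, since $X^{(+)} \times X^{(+)}$ has twice as many edges as $X^{(+)}$; so there is no multiplicatively idempotent element upstairs whose image under the pruning morphism could be $X^+$, and the inference collapses. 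The multiplicative idempotency of $X^+$ in $T^1(\Sigma)$ is a genuinely nontrivial fact requiring a retraction argument: $X^{(+)} \times X^{(+)}$ consists of two copies of the underlying graph of $X$ glued at their common start/end vertex, and one must exhibit the retraction folding the second copy onto the first and then invoke Proposition~\ref{prop_pruningconfluent}. The paper packages exactly this argument as Proposition~\ref{prop_idempotentsfix} ($X^+X = X = XX^*$), from which (iii) $\Rightarrow$ (ii) is one line: if $X = X^+$ then $XX = X^+X = X$. Your proof is repaired simply by citing that proposition in place of the faulty justification; the same repair is needed in the dual step you use for (vi) $\Rightarrow$ (i).

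The remainder of your argument is correct and essentially the paper's. Your trunk-counting proof of (ii) $\Rightarrow$ (i) --- pruning fixes the trunk, so $XX = \ol{X \times X}$ has $2n$ trunk edges, forcing $2n = n$ and hence $n = 0$ --- is exactly the argument given there, and your treatment of (i) $\Rightarrow$ (iii), (iii) $\Rightarrow$ (iv), and the $+$/$*$ duality matches the paper's. One cosmetic difference: the paper routes (iv) $\Rightarrow$ (i) directly from the definition of the $+$ operation (pruning an idempotent tree yields an idempotent tree), rather than detouring through (ii) as you do; that shorter route also avoids putting extra weight on the step where your gap occurs.
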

\begin{proof}
We prove the equivalence of (i), (ii), (iii) and (iv), the equivalence of
(i), (ii), (v) and (vi) being dual. That
(iii) implies (iv) is immediate, while (iv) implies (i) and (i) implies
(iii) follow directly from the definition of the $+$ operation.
If (iii) holds, so that $X = X^+$, then by Proposition~\ref{prop_idempotentsfix} we have
$$XX = X^+ X = X$$
so that (ii) holds.

To 
complete the proof we shall show that (ii) implies (i).
Indeed, suppose for a contradiction that (ii) holds, that is, that $X$ is
idempotent under pruned multiplication, but that (i) does not, so that
$X$ has distinct start and end vertices, and hence at least one trunk
edge. Let $n$ be the number of trunk edges in $X$. Then $X \times X$
has $2n$ trunk edges and, since pruning fixes the trunk, so does
$XX = \ol{X \times X}$. But since $n > 0$ we have $2n \neq n$, so this
contradicts the fact that $X$ is idempotent under pruned multiplication.
This completes the proof that (ii) implies (i).
\end{proof}

The following proposition shows that each pruned tree $X \in T(\Sigma)$ is
$\mathcal{L}^*$-related [respectively, $\mathcal{R}^*$-related] to the
idempotent $X^*$ [respectively, $X^+$], and hence that $T^1(\Sigma)$ is
abundant.

\begin{proposition}\label{prop_abundant}
Let $A, B, X \in T^1(\Sigma)$. Then
\begin{itemize}
\item $AX = BX$ if and only if $A X^+ = B X^+$; and
\item $XA = XB$ if and only if $X^* A = X^* B$.
\end{itemize}
\end{proposition}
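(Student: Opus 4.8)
The plan is to prove the two bullets dually, and within the first to separate the trivial implication from the combinatorial one. The implication $AX^+ = BX^+ \Rightarrow AX = BX$ needs no geometry at all: multiplying on the right by $X$ and using associativity (Corollary~\ref{cor_prunedproperties}) together with $X^+ X = X$ from Proposition~\ref{prop_idempotentsfix}, one passes from $AX^+ = BX^+$ to $A(X^+X) = B(X^+X)$, i.e. $AX = BX$. The corresponding implication for the second bullet is obtained identically from $XX^* = X$. So the whole content lies in the converse $AX = BX \Rightarrow AX^+ = BX^+$, which I would attack directly on trees.

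The first observation is that $A \times X^{(+)}$ has exactly the same underlying labelled graph and start vertex as $A \times X$; the sole difference is that its end vertex has been moved back from the far end of $X$'s trunk to the amalgamation vertex $v_A$ (the end vertex of $A$, equivalently the start vertex of $X$). Thus $A \times X^{(+)}$ arises from $A \times X$ purely by relocating the distinguished end vertex to the trunk vertex $v_A$, and likewise $B \times X^{(+)}$ from $B \times X$. I would isolate as a lemma the fact that such relocation commutes with pruning: if $W$ is a tree, $v$ a trunk vertex, and $W'$ denotes $W$ with its end vertex relocated to $v$, then $\ol{W'} = \ol{(\ol W)'}$, where $(\ol W)'$ is $\ol W$ with end vertex relocated to $v$. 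Indeed, if $\pi \colon W \to W$ is a retraction with image $\ol W$, then $\pi(v) = v$ since retractions fix the trunk pointwise; regarded as a self-map of $W'$ (whose only change is the end marker), $\pi$ still preserves incidence, labels and the start vertex and now fixes the new end vertex $v$, so it is a retraction of $W'$ with image $(\ol W)'$. The desired equality is then precisely the consequence of confluence (Proposition~\ref{prop_pruningconfluent}) already exploited in Theorem~\ref{thm_morphism}, namely that a tree and any of its retracts have the same pruning. Applying this with $W = A \times X$, $v = v_A$ gives $AX^+ = \ol{A \times X^{(+)}} = \ol{(\ol{A \times X})'}$, and symmetrically $BX^+ = \ol{(\ol{B \times X})'}$.

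It then remains to compare the two relocated trees. The hypothesis $AX = BX$ furnishes an isomorphism $\phi \colon \ol{A \times X} \to \ol{B \times X}$, and since morphisms carry trunks bijectively to trunks preserving order, direction and labels, the two trunks have equal total length; cancelling the common suffix contributed by $X$ shows that $A$ and $B$ have the same number of trunk edges. Hence $v_A$ and $v_B$ occupy the same position along their respective trunks, so $\phi(v_A) = v_B$. Viewing $\phi$ as a map of the end-relocated trees, it preserves start and now matches the relocated end vertices, hence is an isomorphism $(\ol{A \times X})' \to (\ol{B \times X})'$; their prunings therefore agree, giving $AX^+ = \ol{(\ol{A \times X})'} = \ol{(\ol{B \times X})'} = BX^+$. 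The second bullet follows by the dual argument, relocating the \emph{start} vertex and using $XX^* = X$.

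I expect the main obstacle to be exactly the relocation lemma: moving the end vertex backward demotes the old trunk segment beyond $v$ to ordinary branches, which may expose new foldings, so one must check that this interacts cleanly with pruning. The argument above dispatches this by producing $(\ol W)'$ as a retract of $W'$ through the very retraction that prunes $W$; the delicate points are verifying that $\pi$ remains a legitimate $\Sigma$-tree morphism once the end marker is moved, and that the amalgamation vertices $v_A$ and $v_B$ genuinely correspond under $\phi$.
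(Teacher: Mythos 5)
Your proof is correct and follows essentially the same route as the paper: the easy direction via $X^+X = X$ (Proposition~\ref{prop_idempotentsfix}), and the converse by observing that a retraction of $A \times X$ onto its pruning also serves as a retraction of $A \times X^{(+)}$ --- your relocation lemma is exactly this step --- followed by confluence of pruning (Proposition~\ref{prop_pruningconfluent}). Your explicit trunk-length count showing that the isomorphism carries $v_A$ to $v_B$ merely spells out a point the paper leaves implicit when it asserts that the two retract images have ``the same underlying graph as $AX = BX$ but with end vertex at the start vertex of $X$''.
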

\begin{proof}
We show that $AX = BX$ if and only if $A X^+ = B X^+$, the other claim being
dual. Certainly if $A$, $B$ and $X$ are pruned trees such that $AX^+ = BX^+$
then by Proposition~\ref{prop_idempotentsfix} we have
$$AX = A(X^+X) = (AX^+) X = (BX^+) X = B (X^+ X) = BX.$$

Conversely, suppose that $A$, $B$ and $X$ are pruned trees such that
$AX = BX$; we must show that $AX^+ = BX^+$.
Let $\pi_A : A \times X \to A \times X$ and $\pi_B : B \times X \to B \times X$
be retractions with images isomorphic to
$$\ol{A \times X} = AX = BX = \ol{B \times X}.$$
Now by the definition of unpruned operations, the tree
$A \times X^{(+)}$ has the same underlying graph as $A \times X$
and the same start vertex, but with end vertex at the start vertex of $X$
instead of the end vertex of $X$. Since the latter vertex lies in the trunk
of $A \times X$ it is fixed by $\pi_A$, and it follows that $\pi_A$ also defines a
retraction of $A \times X^{(+)}$; its image has the same underlying
graph as $AX = BX$ but with end vertex moved to the start vertex of $X$.
Similarly, $\pi_B$ defines a retraction of $B \times X^{(+)}$; its image
also has the same underlying graph as $AX = BX$ but with end vertex at the
start vertex of $X$. Now using Proposition~\ref{prop_pruningconfluent} and
Theorem~\ref{thm_morphism} we
have
$$A X^+ = \ol{A \times X^{(+)}} =
 \ol{\pi_A(A \times X^{(+)})} = \ol{\pi_B(B \times X^{(+)})} = 
 \ol{B \times X^{(+)}} = BX^+.$$
\end{proof}

We are now ready to prove our second main theorem.

\begin{theorem}\label{thm_adequate}
Let $\Sigma$ be an alphabet. Any subset of $T^1(\Sigma)$ closed under the
operations
of pruned multiplication, $+$ and $*$ forms an adequate semigroup under
these operations. Any subset
of $T^1(\Sigma)$ closed under the operations of pruned
multiplication and $+$ [respectively, $*$] forms a left adequate
[respectively, right adequate] semigroup under these operations.
\end{theorem}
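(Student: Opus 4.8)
The plan is to recognise that this theorem is essentially an assembly of the foundational results already established, together with some routine bookkeeping about the relations $\mathcal{R}^*$ and $\mathcal{L}^*$. Let $S$ be a subset of $T^1(\Sigma)$ closed under pruned multiplication, $+$ and $*$. First I would observe that $S$ is a subalgebra of the $(2,1,1)$-algebra $T^1(\Sigma)$, so by Corollary~\ref{cor_prunedproperties} multiplication on $S$ is associative and the operations $+$ and $*$ are idempotent. Thus $S$ is an algebra of signature $(2,1,1)$ whose multiplication is a semigroup operation, and it remains only to verify that $S$ is abundant and that its idempotents commute.

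For abundance I would show that each $X \in S$ satisfies $X \mathcal{R}^* X^+$ and $X \mathcal{L}^* X^*$, where $X^+, X^* \in S$ by closure. Since the restriction of pruned multiplication to $S$ agrees with that on $T^1(\Sigma)$, Proposition~\ref{prop_abundant} gives $AX = BX \iff A X^+ = B X^+$ for all $A, B \in S$, which is precisely the defining condition for $X \mathcal{R}^* X^+$ on pairs drawn from $S$ itself. The one point needing care is that $\mathcal{R}^*$ is computed in $S^1$, so pairs involving the adjoined identity must also be treated; here I would note that the trivial tree $\epsilon \in T^1(\Sigma)$ acts as a two-sided identity with $\epsilon X = X$ and $\epsilon X^+ = X^+$, so substituting $\epsilon$ for the adjoined identity in Proposition~\ref{prop_abundant} disposes of exactly these remaining cases. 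Hence $X \mathcal{R}^* X^+$ holds in $S^1$, and the claim $X \mathcal{L}^* X^*$ is dual. Since $X^+$ and $X^*$ are idempotents of $S$, every $\mathcal{R}^*$-class and every $\mathcal{L}^*$-class of $S$ then contains an idempotent, so $S$ is abundant.

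Next I would establish that the idempotents of $S$ commute. By Proposition~\ref{prop_idempotents} the idempotent elements of $S$ under pruned multiplication are exactly the idempotent trees, and each such $X$ satisfies $X = X^+$; in particular every idempotent of $S$ lies in the image of the $+$ operation. By Corollary~\ref{cor_prunedproperties} the subsemigroup generated by the images of $+$ and $*$ is commutative, so any two idempotents of $S$ commute. Combined with abundance this shows $S$ is an adequate semigroup. As observed after the definition, it then follows that each $\mathcal{R}^*$-class and each $\mathcal{L}^*$-class of $S$ contains a unique idempotent, which confirms that $X^+$ and $X^*$ are the designated idempotents and hence that the inherited $(2,1,1)$-structure is the intended one.

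Finally, the left and right adequate assertions follow by the same argument restricted to one side. If $S$ is closed only under multiplication and $+$, then $X^+ \in S$ still gives an idempotent $\mathcal{R}^*$-related to $X$, so every $\mathcal{R}^*$-class contains an idempotent; by Proposition~\ref{prop_idempotents} the idempotents of $S$ are again exactly the idempotent trees, which lie in the image of $+$ and so commute by Corollary~\ref{cor_prunedproperties}, whence $S$ is left adequate. The right adequate case is dual, using $*$ in place of $+$. The only genuine subtlety throughout is the bookkeeping around $S^1$ described above; the substantive content of adequacy has already been discharged in Propositions~\ref{prop_abundant} and~\ref{prop_idempotents}, so I expect no serious obstacle beyond verifying that these results specialise correctly to an arbitrary subalgebra.
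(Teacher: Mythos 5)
Your proof is correct and takes essentially the same route as the paper's: abundance via Proposition~\ref{prop_abundant}, the characterisation of idempotents via Proposition~\ref{prop_idempotents}, and commutativity of idempotents via Corollary~\ref{cor_prunedproperties}. The only differences are cosmetic --- the paper proves the left adequate case first and deduces the two-sided case, whereas you argue the two-sided case and then restrict, and your explicit bookkeeping for the adjoined identity in $S^1$ (disposed of by the trivial tree) makes precise a point the paper leaves implicit.
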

\begin{proof}
Let $S$ be a subset of $T^1(\Sigma)$ closed under pruned multiplication and
$+$. Then for any element $X \in S$, by Proposition~\ref{prop_abundant}
we have $X \mathcal{R}^* X^+$, where by Proposition~\ref{prop_idempotents}
the element $X^+$ is idempotent under pruned multiplication; hence $S$ is
left abundant. Moreover, by Proposition~\ref{prop_idempotents} again, 
the idempotents under pruned multiplication in $S$ are exactly the elements
of the form $X^+$, which by Corollary~\ref{cor_prunedproperties} commute.
Thus, $S$ is left adequate.

A dual argument shows that if $S$ is a subset of $T^1(\Sigma)$ closed under
pruned multiplication and $*$ then $S$ is right adequate, and it follows
that if $S$ is closed under pruned multiplication, $+$ and $*$ then $S$
is adequate.
\end{proof}

\section{The Free Adequate Monoid and Semigroup}\label{sec_fas}

We saw in the previous section that, for any alphabet $\Sigma$, the
$(2,1,1,0)$-algebra $T^1(\Sigma)$ is an adequate monoid. In this section
we shall show that it is in fact a free adequate monoid, freely generated
by the subset $\Sigma$ of base trees. By Proposition~\ref{prop_monoidsemigroup} this
also establishes that $T(\Sigma)$ is the free adequate semigroup on
$\Sigma$.

To keep the proofs in this and the following sections concise, we shall
need some additional notation. If $X$ is a tree and $S$ is a
set of non-trunk edges and vertices of $X$ then $X \setminus S$ denotes
the largest subtree of $X$ (recalling that a subtree must be connected
and contain the start and end vertices, and hence the trunk) which does not
contain any vertices or edges from $S$.
If $s$ is a single edge or vertex we write $X \setminus s$ for $X \setminus \lbrace s \rbrace$.
If $u$ and $v$ are vertices of $X$ such that there is a directed path from
$u$ to $v$ then we shall denote by $\res{X}{u}{v}$ the tree which has the
same underlying labelled directed graph as $X$ but start vertex $u$ and end vertex $v$.
If $X$ has start vertex $a$ and end vertex $b$ then we define
$\res{X}{u}{} = \res{X}{u}{b}$ and $\res{X}{}{v} = \res{X}{a}{v}$ where
applicable. Thus, for example, $\resmin{X}{v}{v}{e}$ means the largest
connected subgraph of $X$ containing the vertex $v$ but not the edge $e$,
viewed as an idempotent tree with start and end vertex $v$.

\begin{proposition}\label{prop_generators}
The set $T^1(\Sigma)$ of pruned trees is generated as a $(2,1,1,0)$-algebra
by the set $\Sigma$ of base trees.
\end{proposition}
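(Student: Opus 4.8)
The plan is to argue by induction on the number of edges of a pruned tree $X \in T^1(\Sigma)$ (write $|X|$ for this number), showing at each stage that $X$ lies in the subalgebra generated by the base trees $\Sigma$. Throughout, the key technical device is Theorem~\ref{thm_morphism}: to verify an identity such as $X = \ol{E_0}\, a_1 \cdots a_n\, \ol{E_n}$ in $T^1(\Sigma)$ it suffices to check that the corresponding \emph{unpruned} expression, built with $\times$, $(+)$ and $(*)$, reconstructs $X$ up to isomorphism as a $\Sigma$-tree; since pruning is a $(2,1,1,0)$-morphism and $\ol{X} = X$, applying the pruning map to both sides then yields the desired pruned identity. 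This lets me carry out all the combinatorial reconstruction in $UT^1(\Sigma)$, where the operations are merely gluings of trees and relocations of the distinguished vertices.

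The base cases are immediate: the trivial tree is the distinguished identity, and a pruned tree with a single edge is a base tree, hence a generator. For the inductive step I distinguish three cases. If $X$ is not idempotent, let $v_0, \dots, v_n$ be its trunk vertices and $a_1, \dots, a_n$ the base trees labelling its trunk edges $\theta(X)$, and set $E_i = \res{X}{v_i}{v_i} \setminus \theta(X)$, the idempotent tree at $v_i$ consisting of $v_i$ together with the branches of $X$ hanging off it. Deleting the trunk edges from $X$ leaves exactly $n+1$ components, one containing each $v_i$, so the $E_i$ partition the non-trunk edges, and since $n \geq 1$ each has strictly fewer than $|X|$ edges; moreover the unpruned product $E_0 \times a_1 \times \cdots \times a_n \times E_n$ visibly reconstructs $X$. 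Hence $X = \ol{E_0}\, a_1 \cdots a_n\, \ol{E_n}$, and applying the induction hypothesis to each pruned tree $\ol{E_i}$ (which has at most $|E_i| < |X|$ edges) finishes this case.

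It remains to treat idempotent $X$, with base vertex $v$ and at least one edge. If $v$ is incident to two or more edges, split the edges at $v$ into two non-empty classes and let $B_1, B_2$ be the corresponding idempotent subtrees at $v$; then $X = \ol{B_1}\,\ol{B_2}$, while each $B_i$ has strictly fewer edges than $X$, so induction applies. If instead $v$ is incident to a single edge $f$, there are two dual possibilities. When $f$ runs from $v$ to a vertex $w$ with label $a$, take $C = \resmin{X}{w}{w}{f}$, the idempotent tree at $w$ given by the branch beyond $f$, which has $|X|-1$ edges; the unpruned tree $(a \times C)^{(+)}$ is isomorphic to $X$, so Theorem~\ref{thm_morphism} gives $X = \ol{(a \times C)^{(+)}} = (a\, \ol{C})^{+}$. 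When $f$ runs into $v$ from a vertex $u$ with label $a$, the dual computation gives $X = (\ol{C}\, a)^{*}$ with $C = \resmin{X}{u}{u}{f}$. In either sub-case $\ol{C}$ has fewer edges than $X$, and induction completes the argument.

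I expect the main obstacle to be expository rather than conceptual: one must arrange the induction so that every piece fed back into the hypothesis (each $E_i$, each $B_i$, and $C$) is genuinely smaller, and one must repeatedly and carefully justify the reconstruction claims — that the relevant unpruned products and vertex relocations really do return $X$ up to isomorphism — since it is only through these, together with Theorem~\ref{thm_morphism}, that the easy combinatorics in $UT^1(\Sigma)$ is transferred to the pruned algebra $T^1(\Sigma)$.
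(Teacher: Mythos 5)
Your proof is correct and follows the same overall strategy as the paper --- induction on the number of edges, with all the combinatorial reconstruction carried out in $UT^1(\Sigma)$ and transferred to $T^1(\Sigma)$ via Theorem~\ref{thm_morphism} --- but with a coarser decomposition scheme. The paper peels off a single edge per inductive step: for a tree with a trunk edge it takes the \emph{first} trunk edge $e$ and writes $X = Y \times a \times Z$ with $Y = \resmin{X}{v_0}{v_0}{e}$ and $Z = \resmin{X}{v_1}{}{e}$ (so $Z$ may still contain almost the entire trunk and is recursed upon), and for an idempotent tree it detaches one branch at the base vertex, giving $X = Y(aZ)^+$ or $X = (Za)^*Y$ with $Y$ the remainder of the idempotent tree. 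You instead decompose the whole trunk at once as $E_0\,a_1\,E_1 \cdots a_n\,E_n$, and split idempotent trees binarily at the base vertex, reserving the forms $(a\,\ol{C})^+$ and $(\ol{C}\,a)^*$ for the single-branch case; the two schemes are interchangeable, and yours has the mild advantage of directly producing the normal form underlying the map $\rho$ of Section~\ref{sec_fas}. A second genuine difference: the paper asserts without argument that its pieces $Y$ and $Z$ are pruned (true, since a retraction of a piece fixing its attaching vertex extends by the identity to a retraction of $X$), whereas you sidestep this entirely by feeding $\ol{E_i}$, $\ol{B_i}$ and $\ol{C}$ into the induction and observing that pruning cannot increase the edge count --- a cleaner bookkeeping choice. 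One slip to fix: your base case claims that a pruned tree with a single edge is a base tree, which is false --- the idempotent trees $a^+$ and $a^*$ are pruned and have one edge --- but the error is harmless, since your single-branch idempotent case already handles exactly these trees; the induction should simply start from the edgeless tree alone.
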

\begin{proof}
Let $\langle \Sigma \rangle$ denote the $(2,1,1,0)$-subalgebra of $T^1(\Sigma)$
generated by $\Sigma$. We wish to show that every tree in $T^1(\Sigma)$
is contained in $\langle \Sigma \rangle$.
We proceed by induction on number of edges. The tree with no
edges is the identity element of $T^1(\Sigma)$ and so by definition is
contained in every
$(2,1,1,0)$-subalgebra of $T^1(\Sigma)$, and in particular in $\langle \Sigma \rangle$.
Now suppose for induction that $X \in T^1(\Sigma)$ has at least one edge, and
that every tree in $T^1(\Sigma)$ with strictly fewer edges lies in $\langle \Sigma \rangle$.

First suppose that $X$ has at least one trunk edge. Let $v_0$ be the start
vertex of $X$, $e$ be the trunk edge incident with $v_0$, $a$ be the
label of $e$ and $v_1$ be the vertex at the end of
$e$, that is, the second trunk vertex of $X$. Let
$Y = \resmin{X}{v_0}{v_0}{e}$ and $Z = \resmin{X}{v_1}{}{e}$.
Then $Y$ and $Z$ are pruned trees
with strictly fewer edges than $X$, and so by induction lie in
$\langle \Sigma \rangle$. Now clearly from the definitions we have
$Y \times a \times Z = X$, and since $X$ is pruned using
Theorem~\ref{thm_morphism} we have
$$Y a Z = \ol{Y \times a \times Z} = \ol{X} = X$$
so that $X \in \langle \Sigma \rangle$ as required.

Next suppose that $X$ has no trunk edges. Let $e$ be any edge
incident with the start vertex $v_0$, and suppose $e$ has label
$a$. Let $v_1$ be the vertex at the other end of $e$ from $v_0$.
Define
$Y = \resmin{X}{v_0}{v_0}{e}$
and
$Z = \resmin{X}{v_1}{v_1}{e}$.
Then $Y$ and $Z$ are pruned trees
with strictly fewer edges than $X$, and so by induction lie in
$\langle \Sigma \rangle$.
Suppose first that $e$ is orientated away from the start vertex of $X$. Then
from the definitions of unpruned operations we have
$X = Y \times (a \times Z)^{(+)}$, and since $X$ is pruned applying
Theorem~\ref{thm_morphism} yields
$$Y (a Z)^+ = \ol{Y \times (a \times Z)^{(+)}} = \ol{X} = X.$$
In the case where $e$ is orientated towards the start vertex, a dual
argument yields $(Z a)^* Y = X$, thus establishing in all cases
that $X \in \langle \Sigma \rangle$.
\end{proof}

Now suppose $M$ is an adequate monoid and $\chi : \Sigma \to M$ is a function.
Recall that $\Sigma$ is identified with the set of (isomorphism types of) base
trees in $UT^1(\Sigma)$ and $T^1(\Sigma)$.
Our objective is to show that there is a unique $(2,1,1,0)$-morphism from
$T^1(\Sigma)$ to $M$ which extends the function $\chi$.

Let $E(M)$ denote the semilattice of idempotents of the adequate monoid $M$.
We begin by defining a map $\tau : UE^1(\Sigma) \to E(M)$ from the set
$UE^1(\Sigma)$ of (not necessarily pruned) idempotent trees to $E(M)$.
Let $X$ be an idempotent tree. If $X$ has
no edges then we define $\tau(X) = 1$. Otherwise, we define $\tau(X)$
recursively, in terms of the value of $\tau$ on idempotent trees with
strictly fewer edges than $X$, as follows.

Let $v$ be the start vertex of $X$ (which since $X$ is an idempotent tree
is also the end vertex of $X$).
Let $E^+(X)$ be the set of edges in $X$ which start at $v$, and 
$E^-(X)$ the set of edges in $X$ which end at $v$. Now we define
{\small \begin{align*}
\tau(X) = \left( \prod_{e \in E^+(X)}
[\chi(\lambda(e)) \tau(\resmin{X}{\omega(e)}{\omega(e)}{e})]^+ \right)
\left( \prod_{e \in E^-(X)}
[\tau(\resmin{X}{\alpha(e)}{\alpha(e)}{e}) \chi(\lambda(e))]^* \right).
\end{align*}}
Notice that since $X$ has at least one edge and is connected, this product
cannot be empty. Notice also that, since all the factors are idempotent and
idempotents commute in the adequate semigroup $M$, the value of this product is
idempotent, and is independent of the order in which the factors are multiplied.
The value clearly depends only on the isomorphism type of $X$, and so the
map $\tau$
is a well-defined function from $UE^1(\Sigma)$ to $E(M)$. We now establish
some of its elementary properties.

\begin{proposition}\label{prop_tausplit}
Let $X$ be an idempotent tree with start (and end) vertex $v$, and let $X_1$ and
$X_2$ be subtrees of $X$ such that $X = X_1 \cup X_2$ and $X_1 \cap X_2 = \lbrace v \rbrace$. Then $\tau(X) = \tau(X_1) \tau(X_2)$.
\end{proposition}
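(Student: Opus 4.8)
The plan is to prove the identity by directly matching the idempotent factors appearing in the products that define $\tau(X)$, $\tau(X_1)$ and $\tau(X_2)$, rather than by any induction. The starting observation is that, since $X_1 \cap X_2 = \lbrace v \rbrace$ and $X$ is a tree, the vertex $v$ is a cut vertex separating $X_1$ from $X_2$: every edge and every vertex of $X$ other than $v$ lies in exactly one of $X_1$, $X_2$. In particular the edges of $X$ incident with $v$ are partitioned, giving $E^+(X) = E^+(X_1) \sqcup E^+(X_2)$ and $E^-(X) = E^-(X_1) \sqcup E^-(X_2)$, where $E^{\pm}(X_i)$ denotes the corresponding sets of edges of the idempotent tree $X_i$ (which has start and end vertex $v$, being a subtree of $X$ containing the start and end vertex $v$ of $X$).

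The key combinatorial step is to show that the branch subtree appearing in each factor is unchanged when computed inside the relevant $X_i$. Concretely, I would fix an edge $e \in E^+(X)$ lying in $X_i$ and argue that $\resmin{X}{\omega(e)}{\omega(e)}{e} = \resmin{X_i}{\omega(e)}{\omega(e)}{e}$ as idempotent trees. This is where most of the care is needed: one notes that $\resmin{X}{\omega(e)}{\omega(e)}{e}$ is the connected component of $\omega(e)$ in $X$ with $e$ deleted, and since the unique undirected path in the tree $X$ from $\omega(e)$ to $v$ is the single edge $e$, deleting $e$ isolates $\omega(e)$ from $v$, and hence from $X_j$ for $j \neq i$. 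Thus this component lies entirely within $X_i$ and coincides with the corresponding component computed in $X_i$. The dual statement holds for $e \in E^-(X)$ with $\alpha(e)$ in place of $\omega(e)$. Consequently each factor $[\chi(\lambda(e)) \tau(\resmin{X}{\omega(e)}{\omega(e)}{e})]^+$ of $\tau(X)$ is literally equal to the corresponding factor of $\tau(X_i)$, and likewise for the $(*)$-factors.

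With the factor identification in hand, the conclusion is immediate. All factors in the defining products are idempotent, being images of the $+$ and $*$ operations, so by the commutativity of idempotents in $M$ the product over $E^+(X) \cup E^-(X)$ may be regrouped as the product over the $X_1$-factors times the product over the $X_2$-factors, which are precisely $\tau(X_1)$ and $\tau(X_2)$. The degenerate cases are covered by the convention $\tau(X_i) = 1$ when $X_i$ has no edges: if, say, $X_2 = \lbrace v \rbrace$, then $X = X_1$ and the identity reduces to $\tau(X) = \tau(X_1) \cdot 1$. The only real obstacle is the tree-surgery verification of the preceding paragraph; everything else is bookkeeping, reorganising a commutative product of idempotents according to the partition of the edges at $v$.
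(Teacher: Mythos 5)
Your proof is correct and takes essentially the same route as the paper's: both partition $E^{\pm}(X)$ at the cut vertex $v$, observe that each branch $\resmin{X}{\omega(e)}{\omega(e)}{e}$ (or its dual) coincides with the corresponding branch computed inside whichever $X_i$ contains $e$, and then regroup the commuting idempotent factors in the defining product to get $\tau(X_1)\tau(X_2)$. Your explicit tree-surgery justification of the branch identification merely spells out a step the paper asserts without detail, so there is no substantive difference.
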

\begin{proof}
Clearly we have $E^+(X) = E^+(X_1) \cup E^+(X_2)$, and for $i \in \lbrace 1, 2 \rbrace$
and $e \in E^+(X_i)$ we have
$$\tau(\resmin{X}{\omega(e)}{\omega(e)}{e}) = \tau(\resmin{X_i}{\omega(e)}{\omega(e)}{e})$$
so it follows that
$$[\chi(\lambda(e)) \tau(\resmin{X}{\omega(e)}{\omega(e)}{e})]^+
= [\chi(\lambda(e)) \tau(\resmin{X_i}{\omega(e)}{\omega(e)}{e})]^+.$$
A dual claim holds for $e \in E^-(X_i)$ and the claim then follows
directly from the definition of $\tau$.
\end{proof}

\begin{corollary}\label{cor_tausplit}
Let $X$ be an idempotent tree with start vertex $v$. If $v$ is an
edge with $\alpha(e) = v$ then
$$\tau(X) = \tau(X \setminus \omega(e)) \ [\chi(\lambda(e)) \tau(\resmin{X}{\omega(e)}{\omega(e)}{e})]^+$$
while if $e$ is an edge with $\omega(e) = v$ then 
$$\tau(X) = \tau(X \setminus \alpha(e)) \ [\tau(\resmin{X}{\alpha(e)}{\alpha(e)}{e}) \chi(\lambda(e))]^*.$$
\end{corollary}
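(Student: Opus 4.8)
The plan is to recognise the stated identity as a direct instance of Proposition~\ref{prop_tausplit}, by splitting $X$ at its basepoint $v$ into the edge $e$ together with its pendant branch on the one hand, and everything else on the other, and then evaluating $\tau$ on the branch directly from its definition.

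First I would treat the case $\alpha(e) = v$. Let $X_2$ be the subtree of $X$ consisting of the edge $e$ together with the whole of $X$ lying beyond $\omega(e)$, regarded as an idempotent tree with start and end vertex $v$, and let $X_1 = X \setminus \omega(e)$ be the complementary idempotent subtree at $v$ obtained by deleting $e$ and everything beyond it. Because the underlying undirected graph of $X$ is a tree, $e$ is a cut-edge separating $\omega(e)$ from $v$, so these two subtrees satisfy $X = X_1 \cup X_2$ and $X_1 \cap X_2 = \lbrace v \rbrace$; Proposition~\ref{prop_tausplit} then gives $\tau(X) = \tau(X_1)\,\tau(X_2) = \tau(X \setminus \omega(e))\,\tau(X_2)$. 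It remains to identify $\tau(X_2)$. In $X_2$ the only edge incident with $v$ is $e$, and it points away from $v$, so $E^+(X_2) = \lbrace e \rbrace$ while $E^-(X_2) = \emptyset$; the defining product for $\tau$ thus collapses to the single factor $[\chi(\lambda(e))\,\tau(\resmin{X_2}{\omega(e)}{\omega(e)}{e})]^+$. Finally, since the branch of $X_2$ rooted at $\omega(e)$ with $e$ removed is literally the branch $\resmin{X}{\omega(e)}{\omega(e)}{e}$ of $X$, this equals $[\chi(\lambda(e))\,\tau(\resmin{X}{\omega(e)}{\omega(e)}{e})]^+$, yielding the claimed formula. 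The case $\omega(e) = v$ is entirely dual, with $e$ now pointing into $v$ so that $E^-(X_2) = \lbrace e \rbrace$ and $E^+(X_2) = \emptyset$, producing the $*$-factor instead.

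I do not expect any real obstacle here, since the corollary is essentially Proposition~\ref{prop_tausplit} applied to a one-edge splitting, combined with the base case of the recursive definition of $\tau$. The only point deserving a moment's care is verifying that the two subtrees $X_1$ and $X_2$ genuinely partition the edges and non-basepoint vertices of $X$ and meet only in $v$; this is immediate from the tree structure, as $e$ is the unique edge joining $v$ to the component of $X \setminus v$ containing $\omega(e)$.
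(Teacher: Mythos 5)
Your proof is correct and takes essentially the same route as the paper's own: both decompose $X$ at $v$ as $X = X_1 \cup X_2$ with $X_1 = X \setminus \omega(e)$ and $X_2$ the branch consisting of $e$ and everything beyond $\omega(e)$, apply Proposition~\ref{prop_tausplit}, and then evaluate $\tau(X_2)$ directly from the definition via $E^+(X_2) = \lbrace e \rbrace$ and $E^-(X_2) = \emptyset$ (dually in the $\omega(e) = v$ case). The only difference is cosmetic: you spell out the cut-edge verification that $X_1 \cap X_2 = \lbrace v \rbrace$, which the paper simply asserts as clear.
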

\begin{proof}
We prove the claim in the case that $\alpha(e) = v$, the case that
$\omega(e) = v$ being dual. Let $X_1 = X \setminus e = X \setminus \omega(e)$,
let $S$ be the set of edges in $X$ which are incident with $v$
and let $X_2 = X \setminus (S \setminus \lbrace e \rbrace)$ be the maximum
subtree of $X$ containing $e$ but none of the other edges incident
with $v$. Now clearly we have $E^+(X_2) = \lbrace e \rbrace$ and $E^-(X_2) = \emptyset$
so by the definition of $\tau$ we have
$$\tau(X_2) = [\chi(\lambda(e)) \tau(\resmin{X}{\omega(e)}{\omega(e)}{e})]^+.$$
We also have $X = X_1 \cup X_2$ and $X_1 \cap X_2 = \lbrace v \rbrace$
so by Proposition~\ref{prop_tausplit}
$$\tau(X) = \tau(X_1) \tau(X_2) = \tau(X \setminus \omega(e)) [\chi(\lambda(e)) \tau(\resmin{X}{\omega(e)}{\omega(e)}{e})]^+$$
as required.
\end{proof}

Next we define a map $\rho : UT^1(\Sigma) \to M$, from the set of
isomorphism types of (not necessarily pruned) $\Sigma$-trees to the adequate monoid $M$.
Suppose a tree $X$ has trunk vertices
$v_0, \dots, v_n$ in sequence. For $1 \leq i \leq n$ let $a_i$ be the label
of the edge from $v_{i-1}$ to $v_i$. For $0 \leq i \leq n$
let
$$X_i = \resmin{X}{v_i}{v_i}{\theta(X)}$$
that is, $X_i$ is the maximum connected subgraph of $X$ containing $v_i$ but no trunk edges,
viewed as an idempotent tree with start and end vertex $v_i$.
Then we define
$$\rho(X) \ = \ \tau(X_0) \ \chi(a_1) \ \tau(X_1) \ \chi(a_2) \ \dots \ \chi(a_{n-1}) \ \tau(X_{n-1}) \ \chi(a_n) \ \tau(X_n).$$
The value of $\rho$ clearly depends only on the isomorphism type of $X$ so
$\rho$ is indeed a well-defined map on $UT^1(\Sigma)$.
Notice also that if $X$ is an idempotent tree then $\rho(X) = \tau(X)$.
We now establish some elementary properties of the map $\rho$.

\begin{proposition}\label{prop_rhosplit}
Let $X$ be a tree with trunk vertices $v_0, \dots, v_n$ in sequence,
where $n \geq 1$. Let $a_1$ and $a_n$ be the labels of the edges from $v_0$
to $v_1$ and from $v_{n-1}$ to $v_n$ respectively. Then
$$\rho(X) = \tau(\resmin{X}{v_0}{v_0}{v_1}) \chi(a_1) \rho(\resmin{X}{v_1}{}{v_0}) = \rho(\resmin{X}{}{v_{n-1}}{v_n}) \chi(a_n) \tau(\resmin{X}{v_n}{v_n}{v_{n-1}}).$$
\end{proposition}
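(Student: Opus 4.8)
The plan is to prove the first equality by directly unwinding the definition of $\rho$; the second then follows by the dual argument, peeling off the last trunk edge rather than the first. Writing $v_0, \dots, v_n$ for the trunk vertices and $a_1, \dots, a_n$ for the labels of the trunk edges, recall that
$$\rho(X) = \tau(X_0)\,\chi(a_1)\,\tau(X_1)\,\chi(a_2)\,\cdots\,\chi(a_n)\,\tau(X_n),$$
where $X_i = \resmin{X}{v_i}{v_i}{\theta(X)}$ is the idempotent tree hanging at $v_i$. The strategy is to match each factor of the right-hand side of the claimed identity against a contiguous block of this product.

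First I would identify the leading factor. I claim $\resmin{X}{v_0}{v_0}{v_1} = X_0$: viewing $X$ as the idempotent tree with start and end vertex $v_0$, deleting the vertex $v_1$ removes the first trunk edge together with everything reachable only through $v_1$, and since the whole trunk lies beyond $v_1$ this leaves exactly the maximal connected subgraph at $v_0$ avoiding all trunk edges, which is $X_0$ by definition. Hence $\tau(\resmin{X}{v_0}{v_0}{v_1}) = \tau(X_0)$.

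Next I would analyse the tail factor $\rho(\resmin{X}{v_1}{}{v_0})$. The tree $\resmin{X}{v_1}{}{v_0}$ has start vertex $v_1$ and end vertex $v_n$, so its trunk is the directed path $v_1 \to \cdots \to v_n$ with edge labels $a_2, \dots, a_n$; deleting $v_0$ only discards the subtree hanging at $v_1$ through the first trunk edge of $X$, and affects no trunk vertex or trunk edge. Applying the definition of $\rho$ to this tree therefore gives $\tau(X_1')\,\chi(a_2)\,\cdots\,\chi(a_n)\,\tau(X_n)$, where $X_i'$ denotes the idempotent tree hanging at $v_i$ in $\resmin{X}{v_1}{}{v_0}$. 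The one point requiring care is to check that these hanging trees coincide with the original $X_i$. For $2 \le i \le n$ this is immediate, while at $v_1$ the only potentially missing piece is the $v_0$-subtree, which is reached from $v_1$ solely through the first trunk edge of $X$; but that edge is a trunk edge, hence deleted in forming $X_1 = \resmin{X}{v_1}{v_1}{\theta(X)}$, so the $v_0$-subtree is already absent from $X_1$ and thus $X_1' = X_1$. Multiplying the identified factors reassembles exactly the defining product for $\rho(X)$, establishing the first equality.

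Since this argument is a definitional unwinding, the only real obstacle is the notational bookkeeping: confirming that each restriction-with-deletion expression denotes the subtree I expect and that the hanging idempotent trees at the trunk vertices are unchanged. Once the $v_1$-matching above is in place, the second equality follows by the symmetric computation that peels $v_{n-1} \to v_n$ off the right end, with $\resmin{X}{v_n}{v_n}{v_{n-1}} = X_n$ playing the role that $\resmin{X}{v_0}{v_0}{v_1}$ played on the left, and $\resmin{X}{}{v_{n-1}}{v_n}$ reproducing the initial block $\tau(X_0)\,\chi(a_1)\,\cdots\,\tau(X_{n-1})$ of the product.
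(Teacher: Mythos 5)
Your proof is correct and follows essentially the same route as the paper's: both unwind the defining product $\rho(X) = \tau(X_0)\chi(a_1)\tau(X_1)\cdots\chi(a_n)\tau(X_n)$, identify $\resmin{X}{v_0}{v_0}{v_1}$ with $X_0$ and recognise $\tau(X_1)\chi(a_2)\cdots\chi(a_n)\tau(X_n)$ as $\rho(\resmin{X}{v_1}{}{v_0})$, then obtain the second equality by duality. The only difference is that you make explicit the subtree-matching bookkeeping (in particular the check that the hanging tree at $v_1$ is unchanged because the $v_0$-side is reached only through the already-excluded trunk edge), which the paper dispatches as following straight from the definition.
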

\begin{proof}
We prove the first equality, the remaining part being dual. Let $X_0, \dots, X_n$ be as in the definition of $\rho$, so that
$$\rho(X) = \tau(X_0) \ \chi(a_1) \ \tau(X_1) \ \chi(a_2) \ \dots \ \chi(a_{n-1}) \ \tau(X_{n-1}) \ \chi(a_n) \ \tau(X_n).$$
It follows straight from the definition that 
$$\rho(\resmin{X}{v_1}{}{v_0}) = \tau(X_1) \ \chi(a_2) \ \dots \ \chi(a_{n-1}) \ \tau(X_{n-1}) \ \chi(a_n) \ \tau(X_n)$$
so we have
\begin{align*}
\rho(X) &= \tau(X_0) \ \chi(a_1) \ \rho(\resmin{X}{v_1}{}{v_0}) \\
&= \tau(\resmin{X}{v_0}{v_0}{v_1}) \ \chi(a_1) \ \rho(\resmin{X}{v_1}{}{v_0})
\end{align*}
as required.
\end{proof}

The next proposition says that the map $\rho$ is actually a well-defined
$(2,1,1,0)$-morphism from the monoid of isomorphism types of $\Sigma$-trees
(with unpruned operations) to the
adequate monoid $M$. Later, we shall see that it even induces a well-defined
map from the monoid of isomorphism types of pruned $\Sigma$-trees (with pruned operations) to
$M$.

\begin{proposition}\label{prop_rhounprunedmorphism}
The map $\rho : UT^1(\Sigma) \to M$ is a morphism of
$(2,1,1,0)$-algebras.
\end{proposition}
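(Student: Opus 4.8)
The plan is to verify that $\rho$ respects each of the four operations of the signature $(2,1,1,0)$ separately: the distinguished identity, multiplication, $(+)$ and $(*)$. Preservation of the identity is immediate, since the trivial tree is idempotent with no edges, so $\rho(\text{trivial}) = \tau(\text{trivial}) = 1$ directly from the definition of $\tau$.

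For multiplication I would exploit the trunk decomposition built into the definition of $\rho$; no induction is needed here. Write the trunk vertices of $X$ as $u_0, \dots, u_m$ and those of $Y$ as $w_0, \dots, w_p$. In $X \times Y$ the end vertex $u_m$ of $X$ is identified with the start vertex $w_0$ of $Y$, so the trunk of $X \times Y$ is the concatenation of the two trunks. Every idempotent subtree hanging at a trunk vertex of $X \times Y$ coincides with the corresponding one in $X$ or in $Y$, except at the amalgamated vertex, where it is the union (meeting only in that vertex) of the two idempotent trees hanging there in $X$ and $Y$. Splitting $\tau$ at the amalgamated vertex by Proposition~\ref{prop_tausplit} then shows the defining product for $\rho(X \times Y)$ factors exactly as $\rho(X)\,\rho(Y)$ (the degenerate cases, where $X$ or $Y$ is trivial or idempotent, are subsumed by this same computation).

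The substantive point is preservation of $(+)$, the case of $(*)$ being entirely dual. I would argue by induction on the number $n$ of trunk edges of $X$. When $n = 0$ the tree $X$ is idempotent, so $X^{(+)} = X$ and $\rho(X) = \tau(X)$ is idempotent in $M$; thus $\rho(X^{(+)}) = \tau(X) = \tau(X)^+ = \rho(X)^+$ by Proposition~\ref{prop_adequatebasics}(i). For the inductive step, let $e$ be the first trunk edge of $X$, with label $a_1$, running from the start vertex $v_0$ to the second trunk vertex $v_1$. Since $X^{(+)}$ is idempotent with start vertex $v_0$ and $\alpha(e) = v_0$, Corollary~\ref{cor_tausplit} applies to it. One checks directly that $X^{(+)} \setminus v_1$ is the idempotent tree $X_0 = \resmin{X}{v_0}{v_0}{\theta(X)}$ hanging at $v_0$, while $\resmin{X^{(+)}}{v_1}{v_1}{e}$ is $(X')^{(+)}$, where $X' = \resmin{X}{v_1}{}{v_0}$ is the tree on the far side of $e$ and has $n-1$ trunk edges. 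Hence $\rho(X^{(+)}) = \tau(X^{(+)}) = \tau(X_0)\,[\chi(a_1)\,\tau((X')^{(+)})]^+$, and the inductive hypothesis gives $\tau((X')^{(+)}) = \rho((X')^{(+)}) = \rho(X')^+$.

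Writing $f = \tau(X_0)$ (idempotent), $g = \chi(a_1)$ and $h = \rho(X')$, the display above reads $\rho(X^{(+)}) = f\,[g h^+]^+$, while Proposition~\ref{prop_rhosplit} gives $\rho(X) = f g h$. The crux is therefore the identity $f\,[g h^+]^+ = (f g h)^+$ in $M$, and this is the step I expect to be the main obstacle; but it follows cleanly from the basic laws of Proposition~\ref{prop_adequatebasics}. Part~(ii) gives $(fgh)^+ = (fgh^+)^+$, and part~(iv), applied with the idempotent $f$, gives $(fgh^+)^+ = f\,(gh^+)^+$, which is exactly $f\,[gh^+]^+$. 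This closes the induction and, together with the dual argument for $(*)$, completes the verification that $\rho$ is a $(2,1,1,0)$-morphism.
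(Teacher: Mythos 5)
Your proposal is correct and takes essentially the same route as the paper's proof: the trunk-concatenation argument with Proposition~\ref{prop_tausplit} handling multiplication, and the same induction on the number of trunk edges for $(+)$ via Corollary~\ref{cor_tausplit}. Your isolated identity $f\,[g h^+]^+ = (f g h)^+$ is exactly the paper's computation run in the reverse direction, using the same appeals to Proposition~\ref{prop_adequatebasics}(ii) and (iv).
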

\begin{proof}
Let $X$ and $Y$ be trees, say with trunk vertices $u_0, \dots, u_m$ and
$v_0, \dots, v_n$ in sequence respectively. For each $1 \leq i \leq m$ let
$a_i$ be the label of the edge from $u_{i-1}$ to $u_i$, and for each
$1 \leq i \leq n$ let $b_i$ be the label of the edge from $v_{i-1}$ to $v_i$.
For each $0 \leq i \leq m$ let
$X_i = \resmin{X}{u_i}{u_i}{\theta(X)}$
and similarly for each $0 \leq i \leq n$ define
$Y_i = \resmin{Y}{v_i}{v_i}{\theta(Y)}$.

Consider now the unpruned product $X \times Y$. It is easily seen that
for $0 \leq i < m$ we have
$$\resmin{(X \times Y)}{u_i}{u_i}{\theta(X \times Y)} = X_i$$
while for $0 < i \leq n$ we have
$$\resmin{(X \times Y)}{v_i}{v_i}{\theta(X \times Y)} = Y_i.$$
Considering now the remaining trunk vertex $u_m = v_0$ of $X \times Y$ we have
$$\resmin{(X \times Y)}{u_m}{u_m}{\theta(X \times Y)}
= \resmin{(X \times Y)}{v_0}{v_0}{\theta(X \times Y)}
= X_m \times Y_0.$$
By Proposition~\ref{prop_tausplit} and the definition of unpruned
multiplication we have $\tau(X_m \times Y_0) = \tau(X_m) \tau(Y_0)$.
So using the definition of $\rho$ we have
{\small
\begin{align*}
\rho(X \times Y) &= \tau(X_0) \chi(a_1) \tau(X_1) \dots \chi(a_m) \tau(X_m \times Y_0) \chi(b_1) \tau(Y_1) \chi(b_2) \dots \chi(b_n) \tau(Y_n) \\
&= \tau(X_0) \chi(a_1) \tau(X_1) \dots \chi(a_m) \tau(X_m)  \tau(Y_0) \chi(b_1) \tau(Y_1) \chi(b_2) \dots \chi(b_n) \tau(Y_n) \\
&= \rho(X) \rho(Y).
\end{align*}}

Next we claim that $\rho(X^{(+)}) = \rho(X)^+$. We prove this by induction
on the number of trunk edges in $X$. If $X$ has no trunk edges then
$X = X^{(+)}$ and so using the fact that $\tau(X) \in E(M)$ is fixed by
the $+$ operation in $M$ we have
$$\rho(X^{(+)}) = \rho(X) = \tau(X) = \tau(X)^+ = \rho(X)^+.$$
Now suppose for induction that $X$ has at least one trunk edge and that the claim holds for
trees with strictly fewer trunk edges. Recall that
$$X_0 = \resmin{X}{u_0}{u_0}{\theta(X)} = \resmin{X}{u_0}{u_0}{u_1}$$
and let $Z = \resmin{X}{u_1}{}{u_0}$. Now
\begin{align*}
\rho(X^{(+)}) &= \tau(X^{(+)})  &\text{ (by the definition of $\rho$)} \\
&= \tau(X_0) [\chi(a_1) \tau(Z^{(+)})]^+ &\text{ (by Corollary~\ref{cor_tausplit})} \\
&= \tau(X_0) [\chi(a_1) \rho(Z^{(+)})]^+  &\text{ (by the definition of $\rho$)} \\
&= \tau(X_0) [\chi(a_1) \rho(Z)^+]^+  &\text{ (by the inductive hypothesis)} \\
&= \tau(X_0) [\chi(a_1) \rho(Z)]^+ &\text{ (by Proposition~\ref{prop_adequatebasics}(ii))} \\ 
&= [\tau(X_0) \chi(a_1) \rho(Z)]^+ &\text{ (by Proposition~\ref{prop_adequatebasics}(iv))} \\ 
&= \rho(X)^+ &\text{ (by Proposition~\ref{prop_rhosplit})}
\end{align*}
as required.

A dual argument shows that $\rho(X^{(*)}) = \rho(X)^*$. Finally, it follows
directly from the definition that $\rho$ maps the identity element in
$UT^1(\Sigma)$ (that is, the isomorphism type of the trivial tree) to the
identity of $M$, and so is a $(2,1,1,0)$-morphism.
\end{proof}

Our next objective is to establish some technical lemmas involving the
function $\tau$.

\begin{lemma}\label{lemma_naturalorder}
Suppose $Y$ is an idempotent tree with start vertex $u$ and an
edge from $u$ to $v$ with label $a$. Let $Y' = \res{Y}{v}{v}$.
Then $\tau(Y) [\chi(a) \tau(Y')]^+ = \tau(Y)$ and
$[\tau(Y) \chi(a)]^* \tau(Y') = \tau(Y')$
in $M$.
\end{lemma}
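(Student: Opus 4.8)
The plan is to make $\tau(Y)$ and $\tau(Y')$ explicit and then reduce everything to the identities of Proposition~\ref{prop_adequatebasics}. Let $e$ be the given edge, so $\alpha(e)=u$, $\omega(e)=v$ and $\lambda(e)=a$, and set $c=\chi(a)$. Since $Y$ is a tree, deleting $e$ separates it into the component containing $u$ and the component containing $v$; write $p=\tau(Y\setminus v)$ for the value of $\tau$ on the former (rooted at $u$) and $q=\tau(\resmin{Y}{v}{v}{e})$ for the value on the latter (rooted at $v$). Both are idempotents of $M$. Applying Corollary~\ref{cor_tausplit} to the edge $e$ at the start vertex $u$ of $Y$ gives $\tau(Y)=p(cq)^+$. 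For $Y'$ the same two components hang off $v$, but now $e$ points into the start vertex $v$; since $\res{Y'}{u}{u}=Y$ we have $\tau(\resmin{Y'}{u}{u}{e})=p$ and $\tau(Y'\setminus u)=q$, so a second application of Corollary~\ref{cor_tausplit} yields $\tau(Y')=q(pc)^*$. I expect this symmetric decomposition --- in particular the verification that the very same idempotents $p$ and $q$ occur whether one roots at $u$ or at $v$ --- to be the only point requiring care; once it is established, both identities are formal computations.

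For the first identity the goal becomes $p(cq)^+\,[c\,q(pc)^*]^+=p(cq)^+$. First I would merge the two $+$-idempotents by Proposition~\ref{prop_adequatebasics}(v), applied with $a=cq$, to collapse the product to $p\,(cq(pc)^*)^+$; then draw the idempotent $p$ inside the $+$ using Proposition~\ref{prop_adequatebasics}(iv), giving $(pcq(pc)^*)^+$. Now the trailing factor $(pc)^*$ is absorbed: idempotents commute and $pc(pc)^*=pc$ by the dual of Proposition~\ref{prop_adequatebasics}(iii), so the argument of the $+$ reduces to $pcq$. Pulling $p$ back out via Proposition~\ref{prop_adequatebasics}(iv) recovers $p(cq)^+=\tau(Y)$.

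For the second identity the goal is $[p(cq)^+c]^*\,q(pc)^*=q(pc)^*$. The decisive move is to absorb the factor $q$ into the leading $*$-idempotent: by the dual of Proposition~\ref{prop_adequatebasics}(iv) we have $[p(cq)^+c]^*q=(p(cq)^+cq)^*$, and since $(cq)^+cq=cq$ by Proposition~\ref{prop_adequatebasics}(iii) this equals $(pcq)^*$. A further use of the same dual identity rewrites $(pcq)^*$ as $q(pc)^*$, and substituting this back (remembering that $(pc)^*$ is idempotent) collapses the whole expression to $q(pc)^*=\tau(Y')$. The two identities are thus mirror computations --- one performed inside a $+$, the other inside a $*$ --- the substance of each being to absorb the ``foreign'' idempotent contributed by the opposite root, using commutativity of idempotents together with parts (iii), (iv) and (v) of Proposition~\ref{prop_adequatebasics}.
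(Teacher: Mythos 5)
Your proposal is correct and takes essentially the same approach as the paper: the identical decomposition $\tau(Y)=p(cq)^+$ and $\tau(Y')=q(pc)^*$ obtained from Corollary~\ref{cor_tausplit} (the paper writes these as $B(xC)^+$ and $(Bx)^*C$), followed by formal rewriting using parts (iii), (iv), (v) of Proposition~\ref{prop_adequatebasics} and commutativity of idempotents, and your chains of rewrites all check out. The only difference is cosmetic: the paper proves the second identity and dismisses the first as dual, whereas you compute both explicitly, with slightly different but equally valid orderings of the absorption steps.
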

\begin{proof}
We prove the second claim, the first being dual.
Let $Y_1 = Y \setminus v$ and $Y_2 = Y' \setminus u$.
For readability, we let
$B = \tau(Y_1)$, $C = \tau(Y_2)$ and $x = \chi(a)$, noting that
$B$ and $C$ are idempotent. Now from Corollary~\ref{cor_tausplit}
and the definition of
$\tau$ we have
$$\tau(Y) \ = \ \tau(Y_1) (\chi(a) \tau(Y_2))^+ \ = \ B (xC)^+$$
while
$$\tau(Y') \ = \ (\tau(Y_1) \chi(a))^* \tau(Y_2) \ = \ (Bx)^* C.$$
So now
\begin{align*}
[\tau(Y) \chi(a)]^* \tau(Y') &= [B (x C)^+ x]^* (Bx)^* C \\
&= [B (x C)^+ x (Bx)^* C]^* &\text{ (by Proposition~\ref{prop_adequatebasics}(iv))} \\
&= [(x C)^+ B x (Bx)^* C]^* &\text{ (since idempotents commute)} \\
&= [(x C)^+ B x C]^* &\text{ (by Proposition~\ref{prop_adequatebasics}(iii))} \\
&= [B (x C)^+ x C]^* &\text{ (since idempotents commute)} \\
&= [B x C]^* &\text{ (by Proposition~\ref{prop_adequatebasics}(iii))} \\
&= (B x)^* C &\text{ (by Proposition~\ref{prop_adequatebasics}(iv))} \\
&= \tau(Y')
\end{align*}
as required.
\end{proof}

\begin{lemma}\label{lemma_absorb}
Let $Y$ be an idempotent tree with start vertex $u$ and suppose $Y$
has an edge from $u$ to $v$ with label $a$. Let $Y' = \res{Y}{v}{v}$.
and suppose $e \in E(M)$
is an idempotent such that
$\tau(Y') e = \tau(Y')$.
Then $\tau(Y) (\chi(a) e)^+ = \tau(Y).$
\end{lemma}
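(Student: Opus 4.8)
The plan is to exploit the structural decomposition of $\tau(Y)$ given by Corollary~\ref{cor_tausplit} applied to the distinguished edge $e$ from $u$ to $v$, and then to use the hypothesis $\tau(Y') \, e = \tau(Y')$ together with the already-established Lemma~\ref{lemma_naturalorder}. Write $Y_1 = Y \setminus v$, so that by Corollary~\ref{cor_tausplit} we have
\begin{align*}
\tau(Y) = \tau(Y_1) \, [\chi(a) \tau(Y')]^+,
\end{align*}
where $\tau(Y_1)$ and $[\chi(a)\tau(Y')]^+$ are both idempotent. Setting $B = \tau(Y_1)$ and $x = \chi(a)$, our goal $\tau(Y)(xe)^+ = \tau(Y)$ becomes an identity purely about idempotents in $M$, which is the shape of statement we can attack with Proposition~\ref{prop_adequatebasics}.

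**The main computation.** The key observation is that the hypothesis $\tau(Y') e = \tau(Y')$ lets us insert $e$ alongside $\tau(Y')$ without change. First I would note $(xe)^+ = (x \tau(Y') e)^+$ is \emph{not} quite what we want directly; instead the cleaner route is to show $[x\tau(Y')]^+ (xe)^+ = [x\tau(Y')]^+$, since multiplying $\tau(Y) = B[x\tau(Y')]^+$ on the right by $(xe)^+$ then leaves $\tau(Y)$ unchanged. To prove $[x\tau(Y')]^+ (xe)^+ = [x\tau(Y')]^+$, I expect to use Proposition~\ref{prop_adequatebasics}(vi): that part states that if $ef = f$ then $(ae)^+(af)^+ = (af)^+$. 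Here the hypothesis $\tau(Y')e = \tau(Y')$ supplies exactly the relation $e \cdot \tau(Y') = \tau(Y')$ (idempotents commute, so $\tau(Y')\,e = e\,\tau(Y')$), and with the roles $f = \tau(Y')$ the conclusion $(xe)^+(x\tau(Y'))^+ = (x\tau(Y'))^+$ follows. Since the subsemigroup of idempotents is commutative (Corollary~\ref{cor_prunedproperties} / the images of $+,*$ commute in $M$), this gives $[x\tau(Y')]^+(xe)^+ = [x\tau(Y')]^+$ as desired.

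**Assembling the conclusion.** With that idempotent identity in hand, the final step is a short chain:
\begin{align*}
\tau(Y)(\chi(a)e)^+ = B\,[x\tau(Y')]^+ (xe)^+ = B\,[x\tau(Y')]^+ = \tau(Y),
\end{align*}
using commutativity of idempotents to move $(xe)^+$ next to $[x\tau(Y')]^+$ before applying the identity above.

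**Where the difficulty lies.** The routine parts — invoking Corollary~\ref{cor_tausplit} and reshuffling commuting idempotents — are straightforward. The one step requiring care is verifying that Proposition~\ref{prop_adequatebasics}(vi) applies with the correct orientation: that lemma is stated in terms of a relation $ef = f$ among idempotents, and I must match the hypothesis $\tau(Y')e = \tau(Y')$ to that template (with $e$ playing the role of the left factor and $\tau(Y')$ the right), then read off the conclusion for $(\chi(a)\,e)^+$ versus $(\chi(a)\,\tau(Y'))^+$. If the indexing in part (vi) does not line up cleanly, the fallback is to expand $(xe)^+$ directly using Proposition~\ref{prop_adequatebasics}(ii) and (iv) — writing $(xe)^+ = (x e^+)^+ = \dots$ — and absorb $e$ into $\tau(Y')$ by hand, mirroring the idempotent manipulations already carried out in the proof of Lemma~\ref{lemma_naturalorder}. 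Either way the calculation stays within the algebra of commuting idempotents and the basic identities of Proposition~\ref{prop_adequatebasics}.
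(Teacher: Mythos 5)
Your central step is exactly the paper's: both proofs rest on the observation that $[\chi(a)\tau(Y')]^+(\chi(a)e)^+ = [\chi(a)\tau(Y')]^+$, obtained from Proposition~\ref{prop_adequatebasics}(vi) after commuting the hypothesis $\tau(Y')e = \tau(Y')$ to $e\,\tau(Y') = \tau(Y')$, and your matching of the roles in part (vi) is correct. (One small slip of attribution: commutativity of idempotents in $M$ is part of the definition of an adequate monoid, not Corollary~\ref{cor_prunedproperties}, which concerns $T^1(\Sigma)$.) The genuine problem is in your set-up: Corollary~\ref{cor_tausplit} does \emph{not} give $\tau(Y) = \tau(Y_1)[\chi(a)\tau(Y')]^+$. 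The bracketed factor in that corollary is $[\chi(a)\,\tau(\resmin{Y}{v}{v}{f})]^+$, where $f$ is the edge from $u$ to $v$ and $\resmin{Y}{v}{v}{f}$ is only the branch of $Y$ hanging at $v$ away from $f$; by contrast $Y' = \res{Y}{v}{v}$ is, by the paper's conventions, the \emph{entire} tree $Y$ re-rooted at $v$, still containing the edge $f$ and all of $Y_1$. So the decomposition you write is not what the cited corollary says, and as written that step fails.

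The identity you assert does happen to be true, but it needs an argument: with $B = \tau(Y\setminus v)$, $C = \tau(Y'\setminus u)$ and $x = \chi(a)$ as in the proof of Lemma~\ref{lemma_naturalorder}, one has $\tau(Y) = B(xC)^+$ and $\tau(Y') = (Bx)^*C$, whence by Proposition~\ref{prop_adequatebasics}(iv) and (iii),
\[
B[x\tau(Y')]^+ = \bigl(Bx(Bx)^*C\bigr)^+ = (BxC)^+ = B(xC)^+ = \tau(Y).
\]
Inserting this computation repairs your chain. But the repair is unnecessary: Lemma~\ref{lemma_naturalorder}, which you name in your preamble and then never actually invoke, states precisely the absorption you need, namely $\tau(Y)[\chi(a)\tau(Y')]^+ = \tau(Y)$. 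This is the paper's route: insert $[\chi(a)\tau(Y')]^+$ on the right of $\tau(Y)(\chi(a)e)^+$... more precisely, write $\tau(Y)(\chi(a)e)^+ = \tau(Y)[\chi(a)\tau(Y')]^+(\chi(a)e)^+$ by Lemma~\ref{lemma_naturalorder}, collapse the last two factors by the part (vi) observation, and remove $[\chi(a)\tau(Y')]^+$ by Lemma~\ref{lemma_naturalorder} again. With that substitution your argument coincides with the paper's proof; without it, the proof as submitted has a concrete unjustified step.
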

\begin{proof}
First notice that by Proposition~\ref{prop_adequatebasics}(vi) we have
$[\chi(a)\tau(Y')]^+ (\chi(a)e)^+ = [\chi(a) \tau(Y')]^+$. Now we have
\begin{align*}
\tau(Y) (\chi(a) e)^+
&= \tau(Y) [\chi(a) \tau(Y')]^+ (\chi(a) e)^+   &\text{ (by Lemma~\ref{lemma_naturalorder})} \\
&= \tau(Y) [\chi(a) \tau(Y')]^+   &\text{ (by the observation above)} \\
&= \tau(Y)  &\text{ (by Lemma~\ref{lemma_naturalorder} again).}
\end{align*}
\end{proof}

\begin{lemma}\label{lemma_absorb2}
Suppose $\mu : X \to Y$ is a morphism of idempotent $\Sigma$-trees. Then
$\tau(Y) \tau(X) = \tau(Y)$.
\end{lemma}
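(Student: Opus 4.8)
The plan is to induct on the number of edges of the source tree $X$. For the base case, if $X$ has no edges then it is the trivial tree, so $\tau(X) = 1$ and the desired identity reads $\tau(Y) \cdot 1 = \tau(Y)$. For the inductive step, let $u$ be the start (and end) vertex of $X$ and $v = \mu(u)$ the start (and end) vertex of $Y$. By the definition of $\tau$, the element $\tau(X)$ is a product of the idempotent factors $[\chi(\lambda(e))\tau(\resmin{X}{\omega(e)}{\omega(e)}{e})]^+$ ranging over $e \in E^+(X)$ together with the factors $[\tau(\resmin{X}{\alpha(e)}{\alpha(e)}{e})\chi(\lambda(e))]^*$ ranging over $e \in E^-(X)$. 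Since $\tau(Y)$ and every such factor lie in the semilattice $E(M)$, in which idempotents commute, it suffices to prove that $\tau(Y)$ absorbs each factor individually; the full identity $\tau(Y)\tau(X) = \tau(Y)$ then follows by multiplying the factors in one at a time and using commutativity to cancel each in turn.

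First I would fix an edge $e \in E^+(X)$, say running from $u$ to $w = \omega(e)$ with label $a = \lambda(e)$. Because $\mu$ preserves incidence and labels, $\mu(e)$ is an edge of $Y$ running from $v$ to $\mu(w)$ with the same label $a$. The key observation is that the restriction of $\mu$ to the branch $\resmin{X}{w}{w}{e}$ is a morphism of idempotent trees from $\resmin{X}{w}{w}{e}$ (based at $w$) into $Y' = \res{Y}{\mu(w)}{\mu(w)}$ (based at $\mu(w)$): the restriction of a morphism to a subtree is again a morphism, and it sends the common start/end vertex $w$ to $\mu(w)$. Since $\resmin{X}{w}{w}{e}$ has strictly fewer edges than $X$, the inductive hypothesis gives $\tau(Y')\,\tau(\resmin{X}{w}{w}{e}) = \tau(Y')$. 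Applying Lemma~\ref{lemma_absorb} to the edge $\mu(e)$ of $Y$ and the idempotent $\tau(\resmin{X}{w}{w}{e})$ then yields $\tau(Y)\,[\chi(a)\tau(\resmin{X}{w}{w}{e})]^+ = \tau(Y)$, which is exactly the absorption of the factor of $\tau(X)$ coming from $e$.

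For an edge $e \in E^-(X)$ the same argument goes through verbatim, now invoking the dual of Lemma~\ref{lemma_absorb} to show that $\tau(Y)$ absorbs the corresponding $*$-factor $[\tau(\resmin{X}{\alpha(e)}{\alpha(e)}{e})\chi(\lambda(e))]^*$. Having shown that $\tau(Y)$ absorbs every factor, I would finish by absorbing them all: writing $\tau(X) = h_1 \cdots h_k$ as a product of these idempotents, commutativity in $E(M)$ lets me move any factor next to $\tau(Y)$ and delete it, so $\tau(Y) h_1 \cdots h_k = \tau(Y)$ by a trivial induction on $k$.

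The main points requiring care are, first, verifying that $\mu$ genuinely restricts to a morphism of idempotent trees on each branch $\resmin{X}{\omega(e)}{\omega(e)}{e}$, so that the inductive hypothesis applies to a pair of idempotent trees with a morphism between them and a strictly smaller source (note $\mu$ need not be injective, but this causes no difficulty, as the factors are absorbed one at a time); and second, arranging the simultaneous cancellation of all factors, which rests essentially on the commutativity of idempotents in $M$. Once these bookkeeping matters and the dual form of Lemma~\ref{lemma_absorb} are in place, the reduction is routine, the real work having already been done in Lemmas~\ref{lemma_naturalorder} and~\ref{lemma_absorb}.
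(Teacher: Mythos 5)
Your proposal is correct and follows essentially the same argument as the paper's proof: induction on the number of edges of $X$, absorbing each factor of $\tau(X)$ into $\tau(Y)$ by applying the inductive hypothesis to the restriction of $\mu$ on the branch $\resmin{X}{\omega(e)}{\omega(e)}{e}$ mapped into $\res{Y}{\mu(\omega(e))}{\mu(\omega(e))}$, then invoking Lemma~\ref{lemma_absorb} and its dual, and finishing via commutativity of idempotents. There are no gaps.
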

\begin{proof}
We use induction on the number of edges in $X$. If $X$ has no
edges then we have $\tau(X) = 1$ and the result is clear.
Now suppose $X$ has at least one edge and for induction that the result holds
for trees $X$ with fewer edges. By the definition of $\tau$ we
have
{\small
\begin{align*}
\tau(X) = &\left( \prod_{e \in E^+(X)}
[\chi(\lambda(e)) \tau(\resmin{X}{\omega(e)}{\omega(e)}{e})]^+ \right)
 \left( \prod_{e \in E^-(X)}
[\tau(\resmin{X}{\alpha(e)}{\alpha(e)}{e}) \chi(\lambda(e))]^* \right).
\end{align*}}
while
{\small
\begin{align*}
\tau(Y) &= \left( \prod_{e \in E^+(Y)}
[\chi(\lambda(e)) \tau(\resmin{Y}{\omega(e)}{\omega(e)}{e})]^+ \right)
\left( \prod_{e \in E^-(Y)}
[\tau(\resmin{Y}{\alpha(e)}{\alpha(e)}{e}) \chi(\lambda(e))]^* \right).
\end{align*}}
Suppose now that $e \in E^+(X)$. Then since $\mu$ preserves endpoints and
maps the start vertex of $X$ to the start vertex of $Y$, the
edge $\mu(e)$ lies in $E^+(Y)$. We claim that the factor corresponding
to $e$ in the above expression for $\tau(X)$ is absorbed into $\tau(Y)$.
First notice that $\resmin{X}{\omega(e)}{\omega(e)}{e}$ has strictly fewer edges than $X$,
and the restriction of $\mu$ to this subtree is a morphism to
$Y' = \res{Y}{\mu(\omega(e))}{\mu(\omega(e))}$.
Hence, by the inductive hypothesis, we have
$\tau(Y') \tau(\resmin{X}{\omega(e)}{\omega(e)}{e}) = \tau(Y')$.
Now since $\mu(e)$ is an edge from $\mu(v)$ to $\mu(\omega(e))$ with label
$\lambda(e)$, by Lemma~\ref{lemma_absorb} we have
$$\tau(Y) [\chi(\lambda(e)) \tau(\resmin{X}{\omega(e)}{\omega(e)}{e})]^+ = \tau(Y)$$
as required.

A dual argument shows that factors in the product expression for $\tau(X)$ resulting
from edges in $E^-(X)$ are also absorbed into $\tau(Y)$. Thus, we have
$\tau(Y) \tau(X) = \tau(Y)$ as required.
\end{proof}

\begin{corollary}\label{cor_tausubgraph}
Let $Y$ be an idempotent tree and $X$ be a subtree of $Y$.
Then $\tau(Y) \tau(X) = \tau(Y)$.
\end{corollary}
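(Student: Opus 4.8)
The plan is to recognise this corollary as an immediate special case of Lemma~\ref{lemma_absorb2}, once one observes that the inclusion of a subtree into its ambient tree is itself a morphism of idempotent $\Sigma$-trees. The only point requiring any care is to verify that the hypotheses of that lemma are genuinely met, and in particular that $X$ qualifies as an idempotent tree in its own right.

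First I would note that, since $Y$ is an idempotent tree, its start and end vertices coincide at a single vertex $v$. By the definition of subtree, $X$ is a connected subgraph of $Y$ containing the start and end vertices of $Y$; it therefore contains $v$ and inherits $v$ as both its own start vertex and its own end vertex. Consequently $X$ is itself an idempotent tree with distinguished vertex $v$, so that $\tau(X)$ is defined and it makes sense to speak of a morphism of idempotent trees from $X$ into $Y$.

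Next I would consider the inclusion map $\iota : X \to Y$ sending each vertex and each edge of $X$ to the same vertex or edge of $Y$. This is visibly a morphism of $\Sigma$-trees: it preserves $\alpha$, $\omega$ and $\lambda$ on every edge, since these are simply inherited from $Y$, and it carries the common start-and-end vertex $v$ of $X$ to the common start-and-end vertex $v$ of $Y$. Hence $\iota$ is a morphism of idempotent $\Sigma$-trees.

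Applying Lemma~\ref{lemma_absorb2} to $\mu = \iota$ then yields $\tau(Y)\tau(X) = \tau(Y)$ immediately. I do not expect any substantive obstacle here: all the genuine work has already been carried out in establishing Lemma~\ref{lemma_absorb2}, and this corollary merely repackages that result in the form of subtree inclusions, which is the shape in which it will be most convenient to invoke it in later arguments.
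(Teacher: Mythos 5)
Your proposal is correct and is exactly the paper's argument: the paper proves this corollary in one line by observing that the embedding of $X$ into $Y$ satisfies the hypotheses of Lemma~\ref{lemma_absorb2}. Your additional verification that $X$ inherits the common start-and-end vertex $v$ and is thus an idempotent tree is a sound (and welcome) elaboration of the same reasoning.
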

\begin{proof}
The embedding of $X$ into $Y$ satisfies the conditions of
Lemma~\ref{lemma_absorb2}.
\end{proof}

\begin{corollary}\label{cor_tauretract}
Let $Y$ be a retract of an idempotent tree $X$. Then
$\tau(X) = \tau(Y)$.
\end{corollary}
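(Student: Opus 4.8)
The plan is to exploit the fact that a retract $Y$ of $X$ plays a double role: on the one hand it is a \emph{subtree} of $X$, and on the other it is the \emph{image of a morphism} from $X$. Each of these facts supplies one of the two absorption relations we have already established, and the two relations combine—via commutativity of idempotents in $E(M)$—to force equality.

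First I would record that $Y$ is itself an idempotent tree. By definition $Y$ is the image of a retraction $\pi : X \to X$, an idempotent morphism. Since morphisms carry the start and end vertices of $X$ to those of $X$, and $X$ is idempotent (so its start and end vertices coincide at some vertex $v$), the image $Y$ contains $v$ as both its start and end vertex; hence $Y$ is idempotent. Moreover $Y$ is a subtree of $X$, so Corollary~\ref{cor_tausubgraph} applies with the larger tree $X$ and the subtree $Y$, yielding
$$\tau(X)\,\tau(Y) = \tau(X).$$

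Next I would view the retraction $\pi$, with its codomain corestricted to its image, as a morphism $\pi : X \to Y$ of idempotent $\Sigma$-trees. This is legitimate: $\pi$ preserves labels and endpoints, and it sends the start vertex $v$ of $X$ to the start vertex of $Y$ (which is again $v$), and similarly for end vertices. Applying Lemma~\ref{lemma_absorb2} to this morphism gives the reverse absorption
$$\tau(Y)\,\tau(X) = \tau(Y).$$

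Finally I would combine the two displayed identities. Since $\tau$ takes values in $E(M)$, the semilattice of idempotents of the adequate monoid $M$, the elements $\tau(X)$ and $\tau(Y)$ commute, so $\tau(X)\tau(Y) = \tau(Y)\tau(X)$. Chaining the two relations then gives $\tau(X) = \tau(X)\tau(Y) = \tau(Y)\tau(X) = \tau(Y)$, as required. I do not anticipate any genuine obstacle here: the entire content lies in recognising that ``retract'' simultaneously delivers a subgraph inclusion and a surjecting morphism, after which the result is a one-line consequence of Lemma~\ref{lemma_absorb2}, Corollary~\ref{cor_tausubgraph} and commutativity of idempotents. The only point requiring a moment's care is the verification that the corestriction of $\pi$ qualifies as a morphism of idempotent trees in the formal sense of the definition.
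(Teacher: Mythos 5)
Your proposal is correct and is essentially identical to the paper's own proof: both apply Lemma~\ref{lemma_absorb2} to the retraction viewed as a morphism onto its image to get $\tau(Y)\tau(X)=\tau(Y)$, and Corollary~\ref{cor_tausubgraph} to the inclusion $Y\subseteq X$ to get $\tau(X)\tau(Y)=\tau(X)$, then combine the two using commutativity of idempotents. Your explicit verifications that $Y$ is an idempotent tree and that the corestriction of $\pi$ is a morphism are points the paper leaves implicit, but they are routine and your treatment of them is accurate.
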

\begin{proof}
Let $\pi : X \to X$ be a retraction of $X$ onto $Y$.
Since $\pi$ is a morphism, Lemma~\ref{lemma_absorb2} tells
us that $\tau(X) \tau(\pi(X)) = \tau(\pi(X)) = \tau(Y)$.
But since $Y = \pi(X)$ is a subtree of $X$, Corollary~\ref{cor_tausubgraph}
yields
$\tau(X) \tau(\pi(X)) = \tau(X)$.
\end{proof}

We now turn our attention to the function $\rho$.
\begin{lemma}\label{lemma_rhoabsorb}
Let $X$ be a tree with trunk vertices $v_0, \dots, v_n$ in
sequence. Then $\rho(X) = \rho(X) \tau(\res{X}{v_n}{v_n}).$
\end{lemma}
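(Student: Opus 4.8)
The plan is to bypass any direct computation with the long product defining $\rho$, and instead to recognise the idempotent $\tau(\res{X}{v_n}{v_n})$ as nothing other than the $*$-idempotent $\rho(X)^*$ of $\rho(X)$ in $M$. With that identification in hand, the lemma collapses to the basic adequate identity $a a^* = a$.

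Concretely, I would proceed in three short steps. First, observe that $\res{X}{v_n}{v_n}$ has the same underlying labelled directed graph as $X$ but with both start and end vertex equal to the end vertex $v_n$ of $X$; this is exactly the unpruned $(*)$ of $X$, so $\res{X}{v_n}{v_n} = X^{(*)}$. Since this tree is idempotent, the observation (recorded just after the definition of $\rho$) that $\rho$ restricts to $\tau$ on idempotent trees gives $\tau(\res{X}{v_n}{v_n}) = \rho(X^{(*)})$. Second, I would invoke Proposition~\ref{prop_rhounprunedmorphism}, which says that $\rho$ is a $(2,1,1,0)$-morphism and in particular preserves $*$; thus $\rho(X^{(*)}) = \rho(X)^*$, whence $\tau(\res{X}{v_n}{v_n}) = \rho(X)^*$. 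Third, the claim then becomes $\rho(X) = \rho(X)\,\rho(X)^*$, which is precisely Proposition~\ref{prop_adequatebasics}(iii) (in its right adequate form $a a^* = a$) applied to the element $\rho(X)$ of the adequate monoid $M$.

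There is no serious obstacle here: the entire content lies in the first step, namely spotting that rerooting $X$ at its end vertex to form $\res{X}{v_n}{v_n}$ is literally the unpruned $(*)$ operation. The only point needing care is to keep track of which trunk vertex is the end vertex of $X$ (it is $v_n$), so that $(*)$ yields this tree and not some other reorientation. One could instead attempt a direct induction on the number of trunk edges, peeling off the final edge via Proposition~\ref{prop_rhosplit} and then absorbing the resulting $*$-factor with the help of the inductive hypothesis and the preceding lemmas on $\tau$; but this route is decidedly more laborious, and the morphism argument above renders it unnecessary.
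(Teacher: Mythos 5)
Your proof is correct, and it takes a genuinely different and shorter route than the paper's. The key identification is sound: by the definition of the unpruned $(*)$ operation, $\res{X}{v_n}{v_n}$ is literally $X^{(*)}$, and the paper records immediately after the definition of $\rho$ that $\rho$ agrees with $\tau$ on idempotent trees, so $\tau(\res{X}{v_n}{v_n}) = \rho(X^{(*)})$. There is also no circularity in your appeal to Proposition~\ref{prop_rhounprunedmorphism}: that proposition is established before Lemma~\ref{lemma_rhoabsorb}, and its proof (induction on trunk edges via Proposition~\ref{prop_tausplit}, Corollary~\ref{cor_tausplit}, Proposition~\ref{prop_rhosplit} and Proposition~\ref{prop_adequatebasics}) nowhere invokes this lemma, so you may legitimately conclude $\tau(\res{X}{v_n}{v_n}) = \rho(X)^*$ and finish with $a a^* = a$, the right-hand form of Proposition~\ref{prop_adequatebasics}(iii). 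The paper instead carries out exactly the inductive argument you dismiss as laborious: it peels off the final trunk edge using Proposition~\ref{prop_rhosplit}, splits $\tau(\res{X}{v_n}{v_n}) = \tau(X_n)\,[\tau(Y')\chi(a_n)]^*$ by Corollary~\ref{cor_tausplit} (where $Y$ is $X$ with the last trunk edge removed, $Y'$ is $Y$ rerooted at $v_{n-1}$, and $X_n$ is the branch at $v_n$), absorbs $\tau(Y')$ into $\rho(Y)$ by the inductive hypothesis, and then cancels $[\tau(Y')\chi(a_n)]\,[\tau(Y')\chi(a_n)]^* = \tau(Y')\chi(a_n)$. What your route buys is a conceptual one-liner: the lemma is revealed as the adequate identity $\rho(X)\rho(X)^* = \rho(X)$ transported along the morphism $\rho$. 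What the paper's route buys is that the computation stays inside the $\tau$-calculus and rehearses the splitting-and-absorption technique that is immediately reused, at greater length, in the proof of Lemma~\ref{lemma_rho}; logically, though, nothing obstructs your shortcut.
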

\begin{proof}
Let $X' = \res{X}{v_n}{v_n}$.
We use induction on the number of trunk edges in $X$. Clearly if $X$ has
no trunk edges then we have $X = X'$ and from the definition of $\rho$ we have 
$\rho(X) = \tau(X')$, so the claim reduces to the fact that $\tau(X')$
is idempotent. Now suppose $X$ has at least one trunk edge and that the claim
holds for $X$ with strictly fewer trunk edges. Let $Y = \resmin{X}{v_0}{v_{n-1}}{v_n}$,
let $Y' = \res{Y}{v_{n-1}}{v_{n-1}}$ and let
$X_n = \resmin{X}{v_n}{v_n}{v_{n-1}}$.
Let $a_n$ be the label
of the edge from $v_{n-1}$ to $v_n$. By Corollary~\ref{cor_tausplit}
we have
$$\tau(X') = \tau(X_n) [\tau(Y') \chi(a_n)]^*.$$
Now by Proposition~\ref{prop_rhosplit}
 we deduce that $\rho(X) = \rho(Y) \chi(a_n) \tau(X_n)$.
Also, by the inductive hypothesis we have $\rho(Y) = \rho(Y) \tau(Y')$.
Putting these observations together we have
\begin{align*}
\rho(X) \ \tau(X')
&= \left( \rho(Y) \ \chi(a_n) \ \tau(X_n) \right) \ \left( \tau(X_n) \ [\tau(Y') \chi(a_n)]^* \right) \\
&= [\rho(Y) \tau(Y')] \ \chi(a_n) \ \tau(X_n) \ [\tau(Y') \chi(a_n)]^* \\
&= \rho(Y) \ [\tau(Y') \chi(a_n)] \ [\tau(Y') \chi(a_n)]^* \ \tau(X_n) \\
&= \rho(Y) \ \tau(Y') \ \chi(a_n) \ \tau(X_n) \\
&= \rho(Y) \ \chi(a_n) \ \tau(X_n) \\
&= \rho(X)
\end{align*}
as required.
\end{proof}

\begin{lemma}\label{lemma_rho}
Let $X$ be a tree with trunk vertices $v_0, \dots, v_n$ in
sequence, and for $1 \leq i \leq n$ let $a_i$ be the label of the edge
from $v_{i-1}$ to $v_i$. Then
$$\rho(X) = \tau(\res{X}{v_0}{v_0}) \ \chi(a_1) \ \tau(\res{X}{v_1}{v_1}) \ \chi(a_2) \ \dots \ \tau(\res{X}{v_{n-1}}{v_{n-1}}) \ \chi(a_n) \ \tau(\res{X}{v_n}{v_n}).$$
\end{lemma}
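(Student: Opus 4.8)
The plan is to start from the right-hand side, which I denote $P$, and transform it into the defining expression for $\rho(X)$. The two agree in their ``skeleton'' $\tau(-)\,\chi(a_1)\,\tau(-)\cdots\chi(a_n)\,\tau(-)$ and differ only in the idempotent factors: the definition of $\rho$ uses the hanging trees $X_i = \resmin{X}{v_i}{v_i}{\theta(X)}$, whereas $P$ uses the whole tree $W_i := \res{X}{v_i}{v_i}$ regarded as idempotent at $v_i$. For each $i$ the tree $W_i$ splits at $v_i$ into three subtrees meeting pairwise only in $v_i$: the hanging tree $X_i$, a \emph{left arm} $L_i$ (present for $i\ge 1$: the trunk edge $a_i$ together with everything on the $v_0$-side of $v_i$) and a \emph{right arm} $R_i$ (present for $i\le n-1$: the edge $a_{i+1}$ with everything on the $v_n$-side). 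By Proposition~\ref{prop_tausplit}, and since the idempotents commute in $M$, $\tau(W_i)=\tau(X_i)\tau(L_i)\tau(R_i)$. Writing $Q_k = X_k\cup L_k$ for the left chunk and $P_k = X_k\cup R_k$ for the right chunk at $v_k$ (so $Q_0=X_0$, $P_n=X_n$, $Q_n=P_0=W$ as appropriate), Corollary~\ref{cor_tausplit} applied to the single trunk edge of each arm yields $\tau(R_i)=[\chi(a_{i+1})\tau(P_{i+1})]^+$ and $\tau(L_i)=[\tau(Q_{i-1})\chi(a_i)]^*$. The whole argument then consists of absorbing every arm factor, in two passes, using only the identities $b^+b=b$ and $bb^*=b$ from Proposition~\ref{prop_adequatebasics}(iii).

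In the first pass I would absorb the right arms, working from $i=0$ up to $i=n-1$. Using $\tau(W_i)=\tau(Q_i)\tau(R_i)$ and $\tau(W_{i+1})=\tau(P_{i+1})\tau(L_{i+1})$, and placing the relevant idempotents adjacent to $\chi(a_{i+1})$, a local computation gives
\begin{align*}
\tau(W_i)\,\chi(a_{i+1})\,\tau(W_{i+1})
&= \tau(Q_i)\,[\chi(a_{i+1})\tau(P_{i+1})]^+\,\chi(a_{i+1})\,\tau(P_{i+1})\,\tau(L_{i+1}) \\
&= \tau(Q_i)\,\chi(a_{i+1})\,\tau(P_{i+1})\,\tau(L_{i+1})
 = \tau(Q_i)\,\chi(a_{i+1})\,\tau(W_{i+1}),
\end{align*}
where the middle step is $b^+b=b$ with $b=\chi(a_{i+1})\tau(P_{i+1})$. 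Thus the factor $\tau(W_i)$ collapses to $\tau(Q_i)$ while its right neighbour is left intact. Because I process left to right, each $\tau(W_{i+1})$ is still in its full form when needed, so the moves do not interfere, and after this pass $P = \tau(Q_0)\,\chi(a_1)\cdots\chi(a_n)\,\tau(Q_n)$.

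In the second pass I would dually absorb the left arms, working from $i=n$ down to $i=1$. Using $\tau(Q_i)=\tau(L_i)\tau(X_i)$ with $\tau(L_i)=[\tau(Q_{i-1})\chi(a_i)]^*$, the identity $bb^*=b$ (with $b=\tau(Q_{i-1})\chi(a_i)$) gives $\tau(Q_{i-1})\,\chi(a_i)\,\tau(Q_i)=\tau(Q_{i-1})\,\chi(a_i)\,\tau(X_i)$, collapsing $\tau(Q_i)$ to $\tau(X_i)$ while leaving the left neighbour intact; processing right to left keeps each $\tau(Q_{i-1})$ available. After this pass $P=\tau(X_0)\,\chi(a_1)\cdots\chi(a_n)\,\tau(X_n)=\rho(X)$, as required. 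I expect the main obstacle to be not any single computation but the bookkeeping: correctly identifying the arm values via Corollary~\ref{cor_tausplit}, and verifying that the two sweep directions (left-to-right for right arms, right-to-left for left arms) guarantee that each absorption step sees its neighbouring factor in the exact form the relevant identity demands.
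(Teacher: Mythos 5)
Your proposal is correct, and it takes a genuinely different route from the paper. The paper proves the lemma by a nested double induction: an outer induction on the number of trunk edges, which peels off the last trunk edge using Proposition~\ref{prop_rhosplit} together with a bespoke absorption result (Lemma~\ref{lemma_rhoabsorb}, itself proved by induction), and an inner downward induction on the position $j$, which at each step upgrades a factor $\tau(\res{Y}{v_j}{v_j})$ computed in the truncated tree $Y = \resmin{X}{v_0}{v_{n-1}}{v_n}$ to the full-tree factor $\tau(\res{X}{v_j}{v_j})$, relying on Corollary~\ref{cor_tausubgraph} (hence on Lemma~\ref{lemma_absorb2}) as well as Corollary~\ref{cor_tausplit} and Proposition~\ref{prop_adequatebasics}(iii). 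You instead work directly on the target expression, with no induction on trunk edges at all: you split each full-tree idempotent $\tau(\res{X}{v_i}{v_i})$ at $v_i$ into the hanging tree $X_i$ and two arms, identify the arm values via Corollary~\ref{cor_tausplit} as $[\chi(a_{i+1})\tau(P_{i+1})]^+$ and $[\tau(Q_{i-1})\chi(a_i)]^*$, and eliminate them in two sweeps using only $b^+b=b$ and $bb^*=b$. I checked the bookkeeping you flagged as the main risk: in the left-to-right sweep your local computation explicitly reassembles the right neighbour $\tau(W_{i+1})$ after using its decomposition $\tau(P_{i+1})\tau(L_{i+1})$, so each step sees its neighbour in full form; the right-to-left sweep symmetrically leaves each $\tau(Q_{i-1})$ intact; and the boundary conventions $Q_0=X_0$, $Q_n=W_n$, $P_0=W_0$, $P_n=X_n$ handle the ends, including the degenerate case $n=0$. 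What each approach buys: yours is shorter, makes the left/right symmetry of the statement transparent, and renders Lemma~\ref{lemma_rhoabsorb} superfluous (in the paper it serves only this proof), while also avoiding Corollary~\ref{cor_tausubgraph} here; the paper's route instead reuses machinery it must build anyway (Lemma~\ref{lemma_absorb2} and Corollary~\ref{cor_tausubgraph} are still needed for Corollary~\ref{cor_tauretract}), at the cost of a more intricate nested induction. One cosmetic point: Proposition~\ref{prop_tausplit} is stated for a two-piece splitting, so your three-way factorisation $\tau(W_i)=\tau(X_i)\tau(L_i)\tau(R_i)$ should be presented as two successive applications of it (first split off $R_i$ from $Q_i = X_i \cup L_i$, then split $Q_i$) --- a triviality, not a gap.
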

\begin{proof}
For $0 \leq i \leq n$ let $X_i = \resmin{X}{v_i}{v_i}{\theta(X)}$ and let
$X^i = \res{X}{v_i}{v_i}$.
Recall that by definition we have
$$\rho(X) = \tau(X_0) \ \chi(a_1) \ \tau(X_1) \ \chi(a_2) \ \dots \ \chi(a_{n-1}) \ \tau(X_{n-1}) \ \chi(a_n) \ \tau(X_n)$$
so our task is to show that each term $\tau(X_i)$ can be replaced with
$\tau(X^i)$.

Once again we use induction on the number of trunk edges in $X$. If $X$ has no
trunk edges then we have $X^0 = X = X_0$ so that $\rho(X) = \tau(X_0) = \tau(X^0)$
and the claim holds. Now suppose $X$ has at least one trunk edge and that the
claim holds for $X$ with strictly fewer trunk edges; we call this the
\textit{outer inductive hypothesis} to distinguish it from another to
be introduced shortly. As in the previous proof, let
$Y = \resmin{X}{v_0}{v_{n-1}}{v_n}$. For $0 \leq i < n$ let
$Y^i = \res{Y}{v_i}{v_i}.$ Observe that
$Y$ has strictly fewer trunk 
edges than $X$. Now we have
\begin{align*}
\rho(X) &= \rho(X) \tau(X^n) &\text{ (by Lemma~\ref{lemma_rhoabsorb})} \\
&= \rho(Y) \chi(a_n) \tau(X_n) \tau(X^n) &\text{ (by Proposition~\ref{prop_rhosplit})} \\
&= \rho(Y) \chi(a_n) \tau(X^n) &\text{ (by Corollary~\ref{cor_tausubgraph})} \\
&= \tau(Y^0) \chi(a_1) \tau(Y^1) \chi(a_2) \dots \tau(Y^{n-1}) \chi(a_n) \tau(X^n)
\end{align*}
by the outer inductive hypothesis.
We now claim that
$$\rho(X) = \tau(Y^0) \chi(a_1) \tau(Y^1) \chi(a_2) \dots \tau(Y^{j-1}) \chi(a_j) \tau(X^j) \chi(a_{j+1}) \dots \chi(a_n) \tau(X^n).$$
for all $0 \leq j \leq n$. Having fixed the graph $X$, and hence the parameter
$n$, we prove
the claim by downward induction on the parameter $j$.
We have just proved that the claim holds for for $j = n$, which establishes
the base case. Suppose now for induction that $j < n$ and that the claim holds for greater
values of $j$; this we call the \textit{inner inductive
hypothesis}. Let $Z = \resmin{X}{v_{j+1}}{v_{j+1}}{v_j}$.
Then we have
\begin{align*}
\tau(Y^j) \chi(a_{j+1}) \tau(X^{j+1})
&= \tau(Y^j) \chi(a_{j+1}) \tau(Z) \tau(X^{j+1}) \\
&= \tau(Y^j) [\chi(a_{j+1}) \tau(Z)]^+ [\chi(a_{j+1}) \tau(Z)] \tau(X^{j+1}) \\
&= \tau(X^j) \chi(a_{j+1}) \tau(Z) \tau(X^{j+1}) \\
&= \tau(X^j) \chi(a_{j+1}) \tau(X^{j+1})
\end{align*}
where the first and fourth equalities hold by Corollary~\ref{cor_tausubgraph},
the second by Proposition~\ref{prop_adequatebasics}(iii) and the third 
by Corollary~\ref{cor_tausplit}. Now combining this with the inner
inductive hypothesis we have 
\begin{align*}
\rho(X) &= \tau(Y^0) \dots \tau(Y^{j-1}) \chi(a_j) [\tau(Y^j) \chi(a_{j+1}) \tau(X^{j+1})] \chi(a_{j+2}) \dots \chi(a_n) \tau(X^n) \\
&= \tau(Y^0) \dots \tau(Y^{j-1}) \chi(a_j) \tau(X^j) \chi(a_{j+1}) \tau(X^{j+1}) \chi(a_{j+2}) \dots \chi(a_n) \tau(X^n).
\end{align*}
This completes the inner inductive step, so the claim now follows by
induction. The case $j=0$ establishes the inductive step for the outer
induction, and hence proves the lemma.
\end{proof}

\begin{corollary}\label{cor_rhopruning}
Let $X$ be a tree. Then $\rho(X) = \rho(\ol{X})$.
\end{corollary}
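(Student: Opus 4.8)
The plan is to reduce the statement, via Lemma~\ref{lemma_rho}, to an assertion about the function $\tau$ and then to quote Corollary~\ref{cor_tauretract}. First I would recall that, by Proposition~\ref{prop_pruningconfluent}, the pruned tree $\ol{X}$ is (up to isomorphism) a retract of $X$, so that there exists a single retraction $\pi : X \to X$ whose image is $\ol{X}$.

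The key preliminary observation is that $\pi$ fixes the trunk of $X$ pointwise. Indeed, as noted earlier in the paper, a morphism maps trunk edges bijectively onto trunk edges and sends the start and end vertices to the start and end vertices; since the trunk is the unique directed path between these two vertices, $\pi$ must restrict to the identity on the trunk. In particular, if $v_0, \dots, v_n$ are the trunk vertices of $X$ in sequence, with $a_i$ the label of the edge from $v_{i-1}$ to $v_i$, then these are precisely the trunk vertices and edge labels of $\ol{X}$ as well.

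Next I would apply Lemma~\ref{lemma_rho} to both $X$ and $\ol{X}$. Because the trunk vertices and edge labels coincide, the two resulting expressions share the same interleaved factors $\chi(a_i)$, and the desired equality $\rho(X) = \rho(\ol{X})$ reduces to showing that
$$\tau(\res{X}{v_i}{v_i}) = \tau(\res{\ol{X}}{v_i}{v_i})$$
for each $0 \le i \le n$. This is where the phrasing of Lemma~\ref{lemma_rho} in terms of the \emph{full} trees $\res{X}{v_i}{v_i}$ (rather than the trunk-deleted trees $X_i$) pays off: for each fixed $i$, since $\pi$ fixes the trunk vertex $v_i$, it restricts to an idempotent morphism of the idempotent tree $\res{X}{v_i}{v_i}$ (the whole underlying graph of $X$, re-based at $v_i$) with image $\res{\ol{X}}{v_i}{v_i}$. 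Hence $\res{\ol{X}}{v_i}{v_i}$ is a retract of $\res{X}{v_i}{v_i}$, and Corollary~\ref{cor_tauretract} yields the required equality of $\tau$-values.

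The main (and essentially only) obstacle is the verification that $\pi$, being a retraction of $X$ that fixes each trunk vertex, genuinely induces a retraction of each based-at-$v_i$ idempotent tree with the correct image; once this bookkeeping is in place, matching the factors term by term finishes the proof immediately. The one subtlety worth stating carefully is that $\tau$ is retraction-invariant precisely on these full idempotent trees, which is exactly what makes the argument close.
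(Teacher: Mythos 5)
Your proof is correct and takes essentially the same approach as the paper: the paper likewise notes that the retraction $\pi$ with image $\ol{X}$ fixes all trunk vertices, hence restricts to a retraction of each re-based idempotent tree $\res{X}{v_i}{v_i}$ with image $\res{\ol{X}}{v_i}{v_i}$, and then applies Lemma~\ref{lemma_rho} to both $X$ and $\ol{X}$ together with Corollary~\ref{cor_tauretract} to match the $\tau$-factors term by term. The only cosmetic slip is your attribution of the existence of $\pi$ to Proposition~\ref{prop_pruningconfluent} (which gives uniqueness of the pruned retract) rather than to the definition of pruning itself.
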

\begin{proof}
Let $\pi : X \to X$ be a retraction with image $\ol{X}$. 
Suppose $X$ has trunk vertices $v_0, \dots, v_n$. For $1 \leq i \leq n$ let
$e_i$ be the edge from $v_{i-1}$ to $v_i$, and let $a_i \in \Sigma$ be the
label of $e_i$. For $0 \leq i \leq n$ let
$X_i = \resmin{X}{v_i}{v_i}{\theta(X)}$ and let $X^i = \res{X}{v_i}{v_i}$.
Now since $\pi$ is a retraction of $X$, it fixes all trunk vertices of $X$, so
it follows that $\pi$ is also a retraction of each $X^i$. Moreover for each
$i$ we clearly have $\pi(X^i) = \res{\pi(X)}{v_i}{v_i}$. Thus
\begin{align*}
\rho(X) &= \tau(X^0) \chi(a_1) \tau(X^1) \chi(a_2) \dots \tau(X^{n-1}) \chi(a_n) \tau(X^n) \\
&= \tau(\pi(X^0)) \chi(a_1) \tau(\pi(X^1)) \chi(a_2) \dots \tau(\pi(X^{n-1})) \chi(a_n) \tau(\pi(X^n)) \\
&= \tau(\res{\pi(X)}{v_0}{v_0}) \chi(a_1) \tau(\res{\pi(X)}{v_1}{v_1}) \chi(a_2) \dots \tau(\res{\pi(X)}{v_{n-1}}{v_{n-1}}) \chi(a_n) \tau(\res{\pi(X)}{v_n}{v_n}) \\
&= \rho(\pi(X)) \\
&= \rho(\ol{X})
\end{align*}
where the first and fourth equalities follow from Lemma~\ref{lemma_rho},
the second from Corollary~\ref{cor_tauretract}, the third from the definition
of the $X^i$ and the fifth from the
definition of $\pi$.
\end{proof}

Now let $\hat\rho : T^1(\Sigma) \to M$ be the restriction of $\rho$ to
the set of (isomorphism types of) pruned trees.
\begin{corollary}\label{cor_rhoprunedmorphism}
The function $\hat\rho$ is a $(2,1,1,0)$-morphism from $T^1(\Sigma)$ (with pruned
operations) to the adequate monoid $M$.
\end{corollary}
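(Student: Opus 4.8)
The plan is to deduce the corollary directly from two facts already in hand: that $\rho$ is a $(2,1,1,0)$-morphism for the \emph{unpruned} operations (Proposition~\ref{prop_rhounprunedmorphism}), and that the value of $\rho$ is unchanged by pruning (Corollary~\ref{cor_rhopruning}). Since each pruned operation is \emph{by definition} obtained by applying the corresponding unpruned operation and then pruning, these two ingredients should combine with no further work.

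Concretely, I would fix pruned trees $X, Y \in T^1(\Sigma)$ and note that, because $\hat\rho$ is the restriction of $\rho$, we have $\hat\rho(X) = \rho(X)$ and likewise for $Y$. For multiplication I would then chain the three facts together:
$$\hat\rho(XY) = \rho(\ol{X \times Y}) = \rho(X \times Y) = \rho(X)\,\rho(Y) = \hat\rho(X)\,\hat\rho(Y),$$
where the first equality is the definition $XY = \ol{X \times Y}$, the second is Corollary~\ref{cor_rhopruning}, and the third is Proposition~\ref{prop_rhounprunedmorphism}. The identical pattern handles the unary operations: for the $+$ operation I would write $\hat\rho(X^+) = \rho(\ol{X^{(+)}}) = \rho(X^{(+)}) = \rho(X)^+ = \hat\rho(X)^+$, and dually for $*$. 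Finally, the trivial tree is pruned and $\rho$ sends it to the identity of $M$, so $\hat\rho$ preserves the distinguished constant and is therefore a $(2,1,1,0)$-morphism.

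I do not expect any genuine obstacle at this stage: all of the real content has already been discharged in establishing Corollary~\ref{cor_rhopruning}, whose proof (resting on Lemma~\ref{lemma_rho} and the retract-invariance of $\tau$ from Corollary~\ref{cor_tauretract}) is where the difficulty truly lies. The present corollary is essentially the formal observation that a morphism on $UT^1(\Sigma)$ which is constant on pruning-classes descends to a morphism on $T^1(\Sigma)$; one could alternatively phrase it by saying that $\hat\rho$ realises $\rho$ as a factorisation through the pruning morphism of Theorem~\ref{thm_morphism}.
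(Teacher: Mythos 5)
Your proposal is correct and takes essentially the same route as the paper's own proof, which chains the definition of the pruned operations, Corollary~\ref{cor_rhopruning} and Proposition~\ref{prop_rhounprunedmorphism} into exactly the displayed equalities you write, and handles the identity element directly from the definition. The only cosmetic difference is that the paper cites Theorem~\ref{thm_morphism} where you invoke the definition $XY = \ol{X \times Y}$ directly, which amounts to the same thing.
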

\begin{proof}
For any $X, Y \in T^1(\Sigma)$, combining Theorem~\ref{thm_morphism},
Corollary~\ref{cor_rhopruning} and
Proposition~\ref{prop_rhounprunedmorphism} yields
$$\hat\rho(XY) = \rho(XY) = \rho(\ol{X \times Y}) = \rho(X \times Y) = \rho(X) \rho(Y) = \hat\rho(X) \hat\rho(Y),$$
$$\hat\rho(X^+) = \rho(\ol{X^{(+)}}) = \rho(X^{(+)}) = \rho(X)^+ = \hat\rho(X)^+, \text{ and}$$
$$\hat\rho(X^*) = \rho(\ol{X^{(*)}}) = \rho(X^{(*)}) = \rho(X)^* = \hat\rho(X)^*.$$
Finally, that $\hat\rho$ maps the identity of $T^1(\Sigma)$ to the identity
of $M$ follows directly from the definition.
\end{proof}

We are now ready to prove our main result, which gives a concrete
characterisation of the free adequate monoid on a given generating set.
\begin{theorem}\label{thm_monoid}
Let $\Sigma$ be a set. Then $T^1(\Sigma)$ is a free object in the
quasivariety of adequate monoids, freely generated by the set
$\Sigma$ of base trees.
\end{theorem}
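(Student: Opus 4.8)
The plan is to verify the two defining properties of a free object directly, leaning almost entirely on the machinery already assembled. By Theorem~\ref{thm_adequate} the set $T^1(\Sigma)$, being closed under pruned multiplication, $+$ and $*$, is an adequate semigroup; since it contains the trivial tree as a two-sided identity (and as a fixed point of $+$ and $*$), it is in fact an adequate monoid. So it suffices to show that for every adequate monoid $M$ and every function $\chi : \Sigma \to M$ there is exactly one $(2,1,1,0)$-morphism $T^1(\Sigma) \to M$ restricting to $\chi$ on the base trees.

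For existence I would take the morphism $\hat\rho$ furnished by Corollary~\ref{cor_rhoprunedmorphism}, which is already known to be a $(2,1,1,0)$-morphism from $T^1(\Sigma)$ to $M$. The only thing left to check is that it genuinely extends $\chi$, that is, $\hat\rho(a) = \chi(a)$ for each base tree $a \in \Sigma$. This is immediate from the definition of $\rho$: a base tree has exactly two trunk vertices $v_0, v_1$ joined by a single edge with label $a$, and the idempotent trees $X_0$ and $X_1$ attached to these vertices are both trivial, so that $\tau(X_0) = \tau(X_1) = 1$ and hence $\hat\rho(a) = \tau(X_0)\,\chi(a)\,\tau(X_1) = \chi(a)$.

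For uniqueness I would invoke Proposition~\ref{prop_generators}, which tells us that $\Sigma$ generates $T^1(\Sigma)$ as a $(2,1,1,0)$-algebra. If $\phi$ and $\psi$ are two $(2,1,1,0)$-morphisms $T^1(\Sigma) \to M$ that agree on $\Sigma$, then the set $\{\,X \in T^1(\Sigma) : \phi(X) = \psi(X)\,\}$ is a $(2,1,1,0)$-subalgebra containing every base tree, hence equals $T^1(\Sigma)$; so $\phi = \psi$. Combined with the previous paragraph, this shows $\hat\rho$ is the unique extension of $\chi$, establishing that $T^1(\Sigma)$ is free on $\Sigma$ in the quasivariety of adequate monoids.

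I do not expect any real obstacle at this stage: all the difficulty has already been discharged in building $\tau$ and $\rho$ and in proving, via the long chain of lemmas culminating in Corollary~\ref{cor_rhopruning} and Corollary~\ref{cor_rhoprunedmorphism}, that $\rho$ descends to a well-defined morphism on pruned trees. The present theorem is essentially the act of assembling those results, the one genuinely new verification being the trivial base-case computation $\hat\rho(a) = \chi(a)$.
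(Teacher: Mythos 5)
Your proposal is correct and follows essentially the same route as the paper's own proof: cite Theorem~\ref{thm_adequate} for adequacy, Corollary~\ref{cor_rhoprunedmorphism} for existence of the extending morphism $\hat\rho$, and Proposition~\ref{prop_generators} for uniqueness. The only difference is that you spell out two details the paper leaves as immediate --- the computation $\hat\rho(a) = \tau(X_0)\,\chi(a)\,\tau(X_1) = \chi(a)$ on base trees, and the subalgebra-of-agreement argument for uniqueness --- both of which are accurate.
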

\begin{proof}
By Theorem~\ref{thm_adequate}, $T^1(\Sigma)$ is an adequate monoid. Now
for any adequate monoid $M$ and function $\chi : \Sigma \to M$,
define $\hat\rho : T^1(\Sigma) \to M$ as above. 
By Corollary~\ref{cor_rhoprunedmorphism}, $\hat\rho$ is a $(2,1,1,0)$-morphism, and
it is immediate from the definitions that $\hat\rho(a) = \chi(a)$
for every $a \in \Sigma$, so that $\hat\rho$ extends $\chi$.
Finally, by Proposition~\ref{prop_generators}, $\Sigma$ is a
$(2,1,1,0)$-algebra generating set for $T^1(\Sigma)$; it follows that
the morphism $\hat\rho$ is uniquely determined by its restriction to the
set $\Sigma$ of base trees, and hence is the unique morphism with the
claimed properties.
\end{proof}

Combining Theorem~\ref{thm_monoid} with Proposition~\ref{prop_monoidsemigroup} we also obtain
immediately a description of the free adequate semigroup.
\begin{theorem}\label{thm_semigroup}
Let $\Sigma$ be a set. Then the $T(\Sigma)$ is a free object in the
quasivariety of adequate semigroups, freely generated by the set
$\Sigma$ of base trees.
\end{theorem}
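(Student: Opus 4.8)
The plan is to derive Theorem~\ref{thm_semigroup} purely formally from the two results it is flagged as combining: Theorem~\ref{thm_monoid}, which identifies $T^1(\Sigma)$ as the free adequate monoid on $\Sigma$, and Proposition~\ref{prop_monoidsemigroup}, which says that every free adequate monoid is a free adequate semigroup with a single identity adjoined. The entire content is to match the adjoined identity of the monoid picture with the trivial tree, and then to peel it off; no new combinatorics on trees is needed.

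First I would record the set-theoretic decomposition $T^1(\Sigma) = T(\Sigma) \cup \lbrace 1 \rbrace$, where $1$ denotes the (unique) trivial tree. The trivial tree is the identity for pruned multiplication and, since $\ol{1^{(*)}} = \ol{1} = 1 = \ol{1^{(+)}}$, is a fixed point for both $*$ and $+$; these are exactly the properties demanded of the adjoined element in Proposition~\ref{prop_monoidsemigroup}. Because $*$ and $+$ preserve the underlying graph of a tree, they send any tree with at least one edge to another such tree, so $T(\Sigma)$ is closed under multiplication, $*$ and $+$; by Theorem~\ref{thm_adequate} it is therefore an adequate semigroup in its own right.

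Next I would invoke uniqueness of free objects. Let $F$ be the free adequate semigroup on $\Sigma$, which exists by the quasivariety argument cited in the introduction. By Proposition~\ref{prop_monoidsemigroup}, $F^1 = F \cup \lbrace 1 \rbrace$ is the free adequate monoid on $\Sigma$, and by Theorem~\ref{thm_monoid} so is $T^1(\Sigma)$. Since a free object of given rank is unique up to isomorphism, there is a $(2,1,1,0)$-isomorphism $\psi : F^1 \to T^1(\Sigma)$ carrying the generators of $F$ to the base trees $\Sigma$. A monoid isomorphism sends identity to identity, so $\psi(1)$ is the trivial tree; hence $\psi$ restricts to a bijection of $F = F^1 \setminus \lbrace 1 \rbrace$ onto $T^1(\Sigma) \setminus \lbrace 1 \rbrace = T(\Sigma)$. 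As both sides are closed under multiplication, $*$ and $+$, and $\psi$ preserves all three, this restriction is a $(2,1,1)$-isomorphism mapping the free generating set of $F$ onto $\Sigma$, whence $T(\Sigma)$ is itself free adequate on $\Sigma$.

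The argument is short, and the only genuine point to verify is the one highlighted above: that $\psi$ matches the two adjoined identities, so that discarding them yields a bijection between the semigroup parts. This is where I would take care, since a direct universal-property argument (extending $\chi : \Sigma \to N$ through the free monoid $N^1$ and restricting) would instead force one to check that no non-trivial pruned tree is collapsed onto the adjoined identity of $N^1$. Routing the proof through the isomorphism $\psi$ sidesteps that verification entirely, because an isomorphism is automatically identity-preserving and bijective; I expect no obstacle beyond this bookkeeping.
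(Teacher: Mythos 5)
Your proposal is correct and takes essentially the same route as the paper, which states Theorem~\ref{thm_semigroup} as an immediate consequence of combining Theorem~\ref{thm_monoid} with Proposition~\ref{prop_monoidsemigroup} and gives no further proof. The details you supply---identifying the adjoined identity of the free adequate monoid with the trivial tree, and restricting the resulting $(2,1,1,0)$-isomorphism to a $(2,1,1)$-isomorphism onto $T(\Sigma)$---are exactly the bookkeeping the paper leaves implicit.
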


\section{Remarks}\label{sec_remarks}

In this section we collect together some observations on our methods,
their potential for wider application, and their connections to other work.

The ``pruning'' process (that is, computing a pruned tree $\ol{X}$ from
the unpruned tree $X$) can be more concretely realised as a process of
pruning ``branches''. Look for a ``branch'' of $X$ (a subtree which
contains all the vertices on the non-trunkward side of some given edge)
which can be ``folded'' (mapped by a retraction which fixes everything
\textit{except} the given branch) into the rest of the tree, and remove
it. Repeat this process until no such branches can be found, and one is
left with $\ol{X}$.

If one drops the requirement that a $\Sigma$-tree should have a directed
path from the start vertex to the end vertex and defines the
pruning of a tree to be the (isomorphism type of the) minimal image under
an \textit{arbitrary
morphism} instead of a retraction, one ends up with the Munn representation of
the free inverse monoid. (Edge-labelled directed trees which do not admit
non-identity morphisms are, up to isomorphism, exactly the subgraphs of the
Cayley graph of the free group on the labelling alphabet.)

As a consequence, the natural morphism of the free adequate semigroup onto 
the free ample semigroup and into the free inverse semigroup (taking $x^+$
to $xx^{-1}$ and $x^*$ to $x^{-1} x$) has a natural description as a map
of our trees onto Munn trees. Namely, one just takes the homomorphically
minimal image of the tree, and embeds it into the Cayley graph of the free
group with the start vertex at the identity.
More concretely, this can be realised by the following process.
Whenever two distinct edges have the same label and endpoint, identify 
the edges and their respective start vertices; dually whenever two edges 
have the same label and startpoint, identify the edges and their 
respective endpoints. Repeat until no such identifications are possible
(which must happen, since each identification reduces the number of
edges). This is essentially one part (since there is no deletion of spurs)
of the \textit{folding} process originally developed by Stallings \cite{Stallings83} and extensively
employed and extended to solve numerous problems in combinatorial group
theory.

Of course there is also a natural retraction of the free adequate monoid
onto the free monoid on the same generating set, which simply maps each tree
to the label of its trunk.

For the reader who prefers to think of free objects as ``word algebras''
(sets of equivalence classes of formal expressions involving the generators
and operations), the map $\hat\rho$ defined in Section~\ref{sec_fas} can of
course be used to give an explicit isomorphism from $T^1(\Sigma)$ to the
appropriate word algebra. This yields (up to some unimportant technicalities
involving the order in which idempotents are multiplied) a normal form for
elements as formal expressions. Note that the size of the expression obtained
from a tree is linear in the number of vertices and edges in the tree.

Branco, Gomes and Gould \cite{Branco09} have recently initiated the study
of free
objects in the quasivariety of \textit{left} adequate monoids, as part of
their theory of \textit{proper} left and right adequate semigroups. It
transpires that the $(2,1)$-subalgebra of $T^1(\Sigma)$ generated by the
base trees under multiplication and $+$ [respectively, multiplication
and $*$] is exactly the free left adequate [right adequate] monoid on
$\Sigma$. The proof, which is similar in spirit and outline
to Section~\ref{sec_fas} but rather different in some of the technical
details, will appear in a subsequent article \cite{K_onesidedadequate}.

The construction in Section~\ref{sec_fas} of a morphism from $T^1(\Sigma)$
to an adequate monoid $M$ depends only on the facts that $M$ is associative with
commuting idempotents, and that the $+$ and $*$ operations are idempotent
with idempotent images and satisfy the six properties given in the case of
adequate semigroups by Proposition~\ref{prop_adequatebasics}. Thus, the free
adequate semigroups will also be free objects in any category of $(2,1,1)$-algebras
which contains them and satisfy these conditions. This includes in particular the
class of \textit{Ehresmann semigroups} \cite{Lawson91}. Similar remarks
apply to free left adequate semigroups and free right adequate semigroups
(which in particular are free objects also in the classes \textit{left
Ehresmann semigroups} and \textit{right Ehresmann semigroups} respectively)
and to the corresponding classes of monoids.

The classes of monoids we have studied can all be generalised in an obvious
way to give corresponding classes of small categories. For example, there
is a natural notion of an \textit{adequate category}, the single object
instances of which are exactly the adequate monoids.
A natural extension of our methods can be used
to describe the free adequate, free left adequate and free right adequate
category generated by a given directed graph; in this case one works with directed trees in which
the vertices are labelled by vertices of the generating graph, and the edges
labelled by edges of the generating graph with a requirement that the vertex
labels match up with the edge labels in the obvious way.
Just as in the previous remark, the free left adequate category will also be
the free left Ehresmann
category. Left Ehresmann categories are generalisations of the \textit{restriction categories} studied
by Cockett and Lack \cite{Cockett02}, which in the terminology of semigroup
theory are \textit{weakly
left E-ample} categories \cite{GouldAmpleNotes}. The generalisation of our results
to categories thus relates to our main results in the same way that the description of the
free restriction category on a graph given in \cite{Cockett06}
relates to the descriptions of free ample and left ample monoids given by
Fountain, Gomes and Gould \cite{Fountain91,Fountain07}.

\section{Applications}\label{sec_applications}

In this section we show how our characterisation of the free adequate
monoids and semigroup can be applied to establish some of their basic
computational and algebraic properties.

Since pruning of a tree and testing trees for isomorphism can be done
by exhaustive search we have the following immediate corollary of our
main theorems. 
\begin{theorem}
The word problem for any finitely generated free adequate semigroup or
monoid is decidable.
\end{theorem}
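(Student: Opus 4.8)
The plan is to leverage the concrete tree representation established in Theorem~\ref{thm_semigroup} and Theorem~\ref{thm_monoid} to reduce the word problem to a finite, mechanically checkable computation. Recall that the word problem asks, given two formal $(2,1,1)$-expressions (or $(2,1,1,0)$-expressions in the monoid case) in finitely many generators, whether they denote the same element of the free adequate semigroup. By the freeness results just proved, two such expressions are equal in the free adequate semigroup precisely when they evaluate to the same element of $T(\Sigma)$ (respectively $T^1(\Sigma)$), that is, to the same isomorphism type of pruned $\Sigma$-tree. So the strategy is: evaluate each expression to a pruned tree, then test the two resulting trees for isomorphism.

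First I would make precise the evaluation step. Given a formal expression built from the generators using multiplication, $+$ and $*$, I would compute the corresponding element of $UT^1(\Sigma)$ using the \emph{unpruned} operations, proceeding by structural induction on the expression: each generator is interpreted as its base tree, and each application of an operation is carried out by the explicit combinatorial gluing described in the definition of unpruned operations. As noted in the remark following Theorem~\ref{thm_morphism}, it is most efficient to defer pruning to the very end; by Theorem~\ref{thm_morphism} the pruning map is a $(2,1,1,0)$-morphism, so the pruned tree obtained by pruning the fully-evaluated unpruned tree equals the element of $T^1(\Sigma)$ denoted by the expression. This evaluation clearly terminates and produces a finite tree whose size is bounded in terms of the length of the input expression.

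Next I would address the two genuinely combinatorial subroutines, namely pruning and isomorphism testing, and argue that each is effective. For pruning, Proposition~\ref{prop_pruningconfluent} guarantees that a tree has a unique pruned retract up to isomorphism, so it suffices to search for a non-identity retraction, apply it, and repeat; since a tree is finite, there are only finitely many candidate self-maps (each vertex and edge has finitely many possible images), so one can exhaustively enumerate them, check the morphism and idempotency conditions, and detect when no non-identity retraction remains. Each step strictly reduces the number of edges, so the process halts. For isomorphism testing of two finite edge-labelled directed trees with distinguished start and end vertices, one again enumerates the finitely many candidate bijections on vertices and edges and checks the defining conditions of an isomorphism; this is plainly decidable. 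Combining these, the overall algorithm is effective.

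The main obstacle is not any single step but the correctness argument tying them together: I must verify that isomorphism type in $T^1(\Sigma)$ really is the right equality test, which is exactly the content of freeness, and that the unpruned evaluation followed by a single final pruning computes the same isomorphism type as interpreting the expression directly with pruned operations. Both points follow from the morphism property in Theorem~\ref{thm_morphism} together with Theorem~\ref{thm_monoid} and Theorem~\ref{thm_semigroup}, so once these are invoked the remaining work is routine. I would remark only, as the paper's introduction already cautions, that the exhaustive searches make no claim to efficiency; the construction establishes decidability rather than tractability.
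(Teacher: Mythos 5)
Your proposal is correct and takes essentially the same route as the paper, whose entire proof is the one-line observation that pruning and isomorphism testing can be done by exhaustive search, making the theorem an immediate corollary of Theorems~\ref{thm_monoid} and \ref{thm_semigroup}. You have simply spelled out the details the paper leaves implicit --- evaluation via unpruned operations with a single final pruning (justified by Theorem~\ref{thm_morphism}), termination of the pruning search via Proposition~\ref{prop_pruningconfluent}, and freeness as the correctness guarantee --- all of which are accurate and consistent with the paper's argument.
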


The exact computational complexity of the word problem remains unclear, and is deserving of further study.
Notice that testing pruned trees for equivalence is exactly the isomorphism
problem for labelled, rooted, retract-free, directed trees. Since such trees
can be efficiently converted to expressions (via the map $\rho$ used
in Section~\ref{sec_fas}) it follows that the word problem for the free
adequate monoid is at least as hard as this problem.

We now turn our attention to some structural properties of free adequate
semigroups and monoids. Recall that the equivalence relation $\mathcal{J}$
is defined on any semigroup by
$a \mathcal{J} b$ if and only if $a$ and $b$ generate the same principal
two-sided ideal. A semigroup is called \textit{$\mathcal{J}$-trivial} if
no two elements generate the same principal two-sided ideal.

\begin{theorem}\label{thm_jtrivial}
Every free adequate semigroup or monoid is $\mathcal{J}$-trivial.
\end{theorem}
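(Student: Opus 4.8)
The plan is to work inside the free adequate monoid $T^1(\Sigma)$, since $(T(\Sigma))^1 = T^1(\Sigma)$ and hence the $\mathcal{J}$-relation on the free adequate semigroup is governed by exactly the same condition; proving $\mathcal{J}$-triviality of $T^1(\Sigma)$ therefore settles both cases at once. Suppose then that $a, b \in T^1(\Sigma)$ satisfy $a \mathcal{J} b$, so that $b = p a q$ and $a = r b s$ for some $p, q, r, s \in T^1(\Sigma)$. The aim is to prove that $a = b$.

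The first step is a length reduction. Assigning to each tree its number of trunk edges gives a map $t : T^1(\Sigma) \to \mathbb{N}$; since the unpruned product concatenates trunks and pruning fixes the trunk, $t$ is a monoid homomorphism into $(\mathbb{N}, +)$, and $t$ vanishes precisely on the idempotent trees (those with start vertex equal to end vertex, by Proposition~\ref{prop_idempotents}). Applying $t$ to $b = paq$ and $a = rbs$ forces $t(a) = t(b)$ together with $t(p) = t(q) = t(r) = t(s) = 0$, so that $p, q, r, s$ are all idempotent trees. This is exactly what is needed next: because $p$ and $q$ are idempotent their start and end vertices coincide, so gluing them onto $a$ does not lengthen the trunk, and consequently $a$ is a \emph{subtree} of the unpruned product $W_1 = p \times a \times q$ containing both its start and end vertices. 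Symmetrically, $b$ is a subtree of $W_2 = r \times b \times s$.

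The second step constructs morphisms in both directions between $a$ and $b$. Let $\pi_1 : W_1 \to W_1$ be a retraction with image the pruned tree $\ol{W_1} = b$. Restricting $\pi_1$ to the subtree $a$ yields a morphism $\phi : a \to b$: the restriction of a morphism to a subtree is again a morphism, and $\pi_1$ carries the start and end vertices of $W_1$, which are those of $a$, to the start and end vertices of $b$. The same construction applied to $W_2$ produces a morphism $\psi : b \to a$. The key point now is that every endomorphism of a pruned tree is a bijection: since such a tree is finite, some power of any endomorphism is idempotent, hence a retraction, hence (the tree being pruned) the identity, so the endomorphism is invertible. Applying this to the endomorphisms $\psi \phi$ of $a$ and $\phi \psi$ of $b$ shows that each is bijective on vertices and on edges; hence $\phi$ is injective (from $\psi\phi$) and surjective (from $\phi\psi$), i.e.\ a bijective morphism and therefore an isomorphism of $\Sigma$-trees. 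Thus $a = b$ in $T^1(\Sigma)$.

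The main obstacle, and the crux of the argument, is the passage from ``$b$ lies in the two-sided ideal generated by $a$ and vice versa'' to genuine morphisms between $a$ and $b$; this is precisely why the trunk-length reduction is indispensable, since without the idempotency of $p, q, r, s$ the tree $a$ would sit in the interior of $W_1$ rather than as a subtree anchored at the correct start and end vertices, and the restriction of the pruning retraction would fail to be a morphism of $\Sigma$-trees. The remaining ingredient, that endomorphisms of pruned trees are bijective, is short but essential, and I would state it explicitly before assembling the argument above.
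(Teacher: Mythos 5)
Your proof is correct, and its engine is the same as the paper's: realise $b$ as the pruning $\ol{p \times a \times q}$ via Theorem~\ref{thm_morphism}, restrict the pruning retraction to the middle factor to get morphisms in both directions between $a$ and $b$, and then exploit the fact that a finite pruned tree admits no non-identity retraction (via an idempotent power of the composite endomorphism) to upgrade these to an isomorphism. Where you genuinely diverge is in how you establish that the restricted map is a morphism of $\Sigma$-trees, i.e.\ carries start and end vertices to start and end vertices. The paper takes the multipliers $P,Q$ arbitrary and argues directly: the retraction maps trunk edges of $P \times X \times Q$ injectively to trunk edges of $Y$, so $Y$ has at least as many trunk edges as $X$; symmetry of the assumption forces equality, so trunks map bijectively and the distinguished vertices are preserved. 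You instead interpose a length-reduction lemma: the trunk-edge count $t$ is a monoid homomorphism $T^1(\Sigma) \to (\mathbb{N},+)$ (unpruned multiplication concatenates trunks, and pruning fixes the trunk --- a fact the paper uses in proving Proposition~\ref{prop_idempotents}, resting on the observation that any morphism maps trunk edges bijectively onto those of its image), so $b = paq$ and $a = rbs$ force $t(p)=t(q)=t(r)=t(s)=0$, making the multipliers idempotent trees and placing $a$ inside $W_1 = p \times a \times q$ as a subtree sharing the start and end vertices of $W_1$, after which start/end preservation is automatic. Your endgame is also slightly tidier: deducing surjectivity of $\phi$ from bijectivity of $\phi\psi$ replaces the paper's second counting step, which compares total numbers of edges and vertices of $X$ and $Y$ by symmetry. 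The trade-off is one extra lemma on your side against the paper's repeated symmetry-of-assumption counting; your homomorphism $t$ is arguably the more reusable device (it also yields $t(a)=t(b)$ for free, and it is essentially the retraction onto the free monoid mentioned in Section~\ref{sec_remarks}), and your reduction of the semigroup case to the monoid case via $T(\Sigma)^1 = T^1(\Sigma)$ matches the paper's implicit use of Theorems~\ref{thm_monoid} and~\ref{thm_semigroup}.
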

\begin{proof}
Let $\Sigma$ be a set.
By Theorems~\ref{thm_monoid} and \ref{thm_semigroup}
it will suffice to show that
$T^1(\Sigma)$ is $\mathcal{J}$-trivial.

Suppose $X$ and $Y$ are pruned $\Sigma$-trees such that $X \mathcal{J} Y$
in $T^1(\Sigma)$. Then there exist
$P, Q \in T^1(\Sigma)$ such that $Y = P X Q$.
By Theorem~\ref{thm_morphism} we have that
$Y$ is isomorphic to $\ol{P \times X \times Q}$.
By the definition of pruning there is a retraction
$P \times X \times Q \to \ol{P \times X \times Q}$. Let $\sigma : P \times X \times Q \to Y$ be
the composition of this map with an isomorphism from $\ol{P \times X \times Q}$
to $Y$, and let $\gamma : X \to Y$ be the restriction of $\sigma$ to $X$,
viewed as a subgraph of $P \times X \times Q$.

We claim first that the map $\gamma$ is in fact a morphism from $X$ to $Y$. Clearly, $\gamma$ preserves endpoints
of edges; what remains is to show that it maps the start and end vertices of
$X$ to the start and end vertices of $Y$ respectively. Since
$\sigma$ is a morphism it clearly maps trunk edges of $P \times X \times Q$
injectively to trunk edges of $Y$. Since every trunk edge of $X$ is a trunk
edge of $P \times X \times Q$ it follows that $\gamma$ maps trunk edges of
$X$ injectively to trunk edges of $Y$. Thus, we conclude that $Y$ has at
least as many trunk edges as $X$. But then by symmetry of assumption, $X$
and $Y$ have the same number of trunk edges, so $\gamma$ must map the trunk
edges of $X$ bijectively onto the trunk edges of $Y$. It follows easily
that $X$ maps the start and end vertices of $X$ to the start and end
vertices of $Y$ respectively, as required to show that $\gamma$ is a
morphism.

Now using symmetry of assumption again, we may also obtain a morphism
$\delta : Y \to X$.
Consider now the composition $\delta \gamma : X \to X$. Since this is a 
map on a finite set, it has an idempotent power, say $(\delta \gamma)^n$,
which is a retraction of $X$. Since $X$ is by assumption 
pruned, we conclude that $(\delta \gamma)^n$ is the identity map on $X$, 
and hence that $\gamma$ is injective on edges and vertices. In particular,
we see that
$Y$ has at least as many edges and vertices as $X$, and by symmetry of
assumption once again, we may conclude that $X$ has the same number of edges and
vertices as $Y$. It
follows that the injective morphism $\gamma$ is surjective,
which means that $\gamma$ is an isomorphism from $X$ to $Y$. Thus, $X$
and $Y$ represent the same element of $T^1(\Sigma)$.
\end{proof}

\begin{theorem}
No free adequate semigroup or monoid on a non-empty set is finitely generated
as a semigroup or monoid.
\end{theorem}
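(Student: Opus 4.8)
The plan is to reduce, via Theorems~\ref{thm_monoid} and \ref{thm_semigroup}, to showing that $T^1(\Sigma)$ and $T(\Sigma)$ are not finitely generated \emph{using the multiplication alone} (this is the crux: although $\Sigma$ generates as a $(2,1,1,0)$-algebra, the operations $+$ and $*$ are now forbidden). The idea is to isolate an invariant which measures the ``height'' of an idempotent tree and which plain multiplication cannot make large. For an idempotent tree $X$ with root $v$, define $\ell(X)$ to be the largest distance, in the underlying undirected tree, from $v$ to any vertex of $X$. For a fixed $a \in \Sigma$ I would take the infinite family $C_n = \ol{(a \times \cdots \times a)^{(+)}}$ ($n$ factors): the idempotent tree that is a directed path of $n$ edges labelled $a$ with start and end vertex at one end. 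Each $C_n$ is pruned (at each vertex the incoming $a$-edge is forced, so the only retraction is the identity), non-trivial, and satisfies $\ell(C_n) = n$.

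The argument rests on two observations about $\ell$. First, since pruning fixes the trunk (as used in the proof of Proposition~\ref{prop_idempotents}), trunk length is additive under pruned multiplication; hence if an idempotent element (trunk length $0$) is written as a product $g_1 \cdots g_k$, every factor $g_i$ must itself be idempotent. By Theorem~\ref{thm_morphism} such a product equals $\ol{g_1 \times \cdots \times g_k}$, and since the $g_i$ are idempotent the unpruned product $g_1 \times \cdots \times g_k$ is simply the ``bouquet'' obtained by identifying all their roots at a single vertex $v$. Second, $\ell$ cannot grow under this operation: for the bouquet $W = g_1 \times \cdots \times g_k$ we have $\ell(W) = \max_i \ell(g_i)$, because distinct $g_i$ meet only at $v$, so any path from $v$ stays inside one $g_i$; and $\ol{W}$ is the image of a retraction of $W$, hence (up to isomorphism) a subgraph of $W$ containing $v$, so $\ell(\ol{W}) \leq \ell(W)$. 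Combining these,
$$\ell(g_1 \cdots g_k) \;=\; \ell\bigl(\ol{g_1 \times \cdots \times g_k}\bigr) \;\leq\; \ell(g_1 \times \cdots \times g_k) \;=\; \max_i \ell(g_i).$$

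Now suppose a finite set $G$ generated $T(\Sigma)$ as a semigroup, and set $D = \max\{\ell(g) : g \in G,\ g \text{ idempotent}\}$, a fixed finite number. Any expression of $C_n$ as a product of generators uses only idempotent generators, so the displayed inequality forces $\ell(C_n) = n \leq D$; taking $n > D$ gives a contradiction, and no finite $G$ can exist. The monoid case is identical: the adjoined identity is an idempotent tree with $\ell = 0$, so identity factors may be discarded without affecting the bound. I expect the main obstacle to be the second observation, namely verifying carefully that pruning (a retraction) produces a root-containing subgraph of the bouquet and therefore cannot increase $\ell$; the reduction to idempotent factors via additivity of trunk length, and the computation that $\ell$ is the pointwise maximum over the leaves of a bouquet, are then routine.
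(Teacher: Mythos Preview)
Your argument is correct and rests on the same core idea as the paper: an invariant measuring distance from the trunk which pruned multiplication cannot increase. The paper defines $\delta(X)$, the greatest distance of any vertex from the trunk, for \emph{all} pruned trees, and observes directly that $\delta(X \times Y) = \max(\delta(X),\delta(Y))$ and $\delta(\ol{X}) \leq \delta(X)$, whence $\delta(XY) \leq \max(\delta(X),\delta(Y))$; any finitely generated subsemigroup then has bounded $\delta$, while trees with $\delta$ arbitrarily large (your $C_n$, for instance) exist. Your $\ell$ is exactly $\delta$ restricted to idempotent trees, and you reach the same conclusion via the extra step of showing that any factorisation of an idempotent in $T^1(\Sigma)$ uses only idempotent factors. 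That step is correct (trunk length is additive under pruned multiplication and non-negative), but unnecessary: defining the invariant on all trees, as the paper does, bypasses the reduction entirely and yields a shorter proof. One small patch: your $D$ is a maximum over an empty set when $G$ contains no idempotents; in that case your trunk-length argument already shows no idempotent lies in the subsemigroup generated by $G$, but you should say so explicitly (or adopt the convention $\max\emptyset = 0$).
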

\begin{proof}
For each $X \in T^1(\Sigma)$, we let $\delta(X)$ be the greatest distance
(length of an undirected path) of any vertex from the trunk. Clearly
we have $\delta(\ol{X}) \leq \delta(X)$ for all $X$.
Moreover, for any pruned trees $X$ and $Y$ it follows easily from the
definition of unpruned multiplication that $\delta(X \times Y) = \max(\delta(X), \delta(Y))$,
so we have
$$\delta(XY) = \delta(\ol{X \times Y}) \leq \delta(X \times Y) = \max(\delta(X), \delta(Y)).$$

Now if $F$ is any finite set of pruned $\Sigma$-trees, then there exists
an upper bound on $\delta(X)$ for $X \in F$. It follows from the above
that this is also an upper bound on $\delta(X)$ for $X$ in the subsemigroup
of $T(\Sigma)$ [submonoid of $T^1(\Sigma)$] generated by $F$. But there are pruned trees
in $T(\Sigma)$ with vertices arbitrarily far
away from the trunk, so $F$ cannot generate the whole of these semigroups.
\end{proof}

\section*{Acknowledgements}

This research was supported by an RCUK Academic Fellowship. The author
would like to thank John Fountain and Victoria Gould for their
encouragement and advice, all the authors of \cite{Branco09} for
allowing him access to their unpublished work and work in progress, Robert
Gray for pointing out the connection between pruned trees and cores of graphs,
and Mark Lawson for alerting him to the existence of work on free restriction
categories \cite{Cockett06}.

\bibliographystyle{plain}

\def\cprime{$'$} \def\cprime{$'$} \def\cprime{$'$}

\end{document}